\documentclass[10pt, reqno]{amsart}

\usepackage{amsmath,amssymb,amsthm,amsfonts,verbatim}
\usepackage{microtype}
\usepackage[all,2cell]{xy}
\usepackage{mathtools}
\usepackage{graphicx}
\usepackage{hyperref}
\usepackage{mathrsfs}

\CompileMatrices

\usepackage[top=1.3in,bottom=1.9in,left=1.4in,right=1.4in]{geometry}

\theoremstyle{plain}
\newtheorem{theorem}{Theorem}[section]

\newtheorem{proposition}[theorem]{Proposition}
\newtheorem{lemma}[theorem]{Lemma}

\theoremstyle{definition}

\newtheorem{remark}[theorem]{Remark}

\newcommand{\nc}{\newcommand}
\nc{\dmo}{\DeclareMathOperator}

\nc{\Q}{\mathbb{Q}}
\nc{\F}{\mathbb{F}}
\nc{\R}{\mathbb{R}}
\nc{\Z}{\mathbb{Z}}
\nc{\C}{\mathbb{C}}
\nc{\Ell}{\mathcal{L}}
\nc{\M}{\mathcal{M}}
\nc{\K}{\mathcal{K}}
\nc{\disk}{\mathbb{D}}
\nc{\hyp}{\mathbb{H}}

\nc{\CP}{\mathbb{CP}}
\nc{\cS}{\mathcal{S}}
\dmo{\Mod}{Mod}
\dmo{\Diff}{Diff}
\dmo{\Homeo}{Homeo}
\dmo{\dist}{dist}
\dmo\BDiff{BDiff}
\dmo\SO{SO}
\dmo\Hom{Hom}
\dmo\SL{SL}
\dmo\Sp{Sp}
\dmo\rank{rank}
\dmo\sig{sig}
\dmo\Out{Out}
\dmo\Aut{Aut}
\dmo\Inn{Inn}
\dmo\GL{GL}
\dmo\PSL{PSL}
\dmo\tr{tr}
\dmo\BHomeo{BHomeo}
\dmo\EHomeo{EHomeo}
\dmo\EDiff{EDiff}
\nc\Sig{\Sigma}
\dmo\Teich{Teich}
\dmo\Fix{Fix}
\nc{\pair}[1]{\langle #1 \rangle}
\nc{\abs}[1]{\left| #1 \right|}
\nc{\action}{\circlearrowright}
\nc{\norm}[1]{\left | \left | #1 \right | \right |}
\nc{\abcd}[4]{\left(\begin{array}{cc} #1 & #2 \\ #3 & #4 \end{array}\right)}
\nc{\into}\hookrightarrow
\dmo{\Isom}{Isom}
\nc{\normal}{\vartriangleleft}
\dmo{\Vol}{Vol}
\dmo{\im}{Im}
\dmo{\Push}{Push}
\dmo{\Conf}{Conf}
\dmo{\PConf}{PConf}
\dmo{\id}{id}

\renewcommand{\epsilon}{\varepsilon}
\nc{\coloneq}{\mathrel{\mathop:}\mkern-1.2mu=}
\nc{\margin}[1]{\marginpar{\scriptsize #1}}
\nc{\para}[1]{\medskip\noindent\textbf{#1.}}

\newenvironment{packed_enum}{
\begin{enumerate}
  \setlength{\itemsep}{0pt}
  \setlength{\parskip}{0pt}
  \setlength{\parsep}{0pt}
}{\end{enumerate}}

\title[Cup products, Johnson homomorphism, and surface bundles over surfaces]{Cup products, the Johnson homomorphism, and surface bundles over surfaces with multiple fiberings}

\author{Nick Salter}
\email{nks@math.uchicago.edu}
\date{\today}

\address{Department of Mathematics\\ University of Chicago\\ 5734 S. University Ave., Chicago, IL 60637}

\begin{document}
\maketitle

\begin{abstract}
Let $\Sigma_g \to E \to \Sigma_h$ be a surface bundle over a surface with monodromy representation $\rho: \pi_1 \Sigma_h \to \Mod(\Sigma_g)$ contained in the Torelli group $\mathcal{ I}_g$. In this paper we express the cup product structure in $H^*(E, \Z)$ in terms of the Johnson homomorphism $\tau: \mathcal{ I}_g \to \wedge^3 (H_1 (\Sigma _g, \Z))$. This is applied to the question of obtaining an upper bound on the maximal $n$ such that $p_1: E \to \Sigma_{h_1}, \dots, p_n: E \to \Sigma_{h_n}$ are fibering maps realizing $E$ as the total space of a surface bundle over a surface in $n$ distinct ways. We prove that any nontrivial surface bundle over a surface with monodromy contained in the Johnson kernel $\K_g$ fibers in a unique way.

\end{abstract}

\section{Introduction}  The theory of the Thurston norm gives a detailed picture of the set of possible ways that a compact, oriented $3$-manifold $M$ can fiber as a surface bundle. If $b_1(M) > 1$, then $M$ admits infinitely many such fibrations $\Sigma_g \to M \to S^1$; finitely many for each $g \ge 2$. The purpose of the present paper is to take up a similar sort of inquiry for $4$-manifolds $\Sigma_g \to E \to \Sigma_h$ fibering as a surface bundle over a surface of genus $g \ge 2$. 

When $h = 1$ (i.e. the base surface is a torus), a similar story as in the $3$-manifold setting unfolds; if $M^3$ is a $3$-manifold admitting infinitely many fiberings $p: M \to S^1$, then $p\times \id: M^3\times S^1 \to S^1 \times S^1$ admits infinitely many fiberings as well. However, in stark contrast with the $3$-dimensional setting and with the case of surface bundles over the torus, F.E.A. Johnson showed in \cite{FEA2} that if $\Sigma_g \to E \to \Sigma_h$ is a surface bundle over a surface with $g, h \ge 2$, then there are only finitely many distinct fibrations $p_i: E \to \Sigma_{h_i}$ realizing $E$ as the total space of a surface bundle over a surface (see Proposition \ref{proposition:fiberings} for a precise definition of what is meant by ``distinct''). The book \cite{hillman} contains a treatment of results of this type, as does the preprint \cite{rivinfiber}, in which the case of surface bundles over surfaces is situated in the larger context of ``fibering rigidity'' for a wide class of manifolds. 

A particularly simple example of a surface bundle over a surface admitting two fiberings is that of a trivial bundle, i.e. a product of surfaces $\Sigma_g \times \Sigma_h$. At the time of Johnson's result, there was essentially one known method for producing nontrivial surface bundles over surfaces with multiple fiberings, due independently to Atiyah and Kodaira (see \cite{atiyah}, \cite{kodaira}, as well as the summary in \cite{moritabook}). Their construction is built by taking a certain cyclic branched covering $p: E \to \Sigma_g \times \Sigma_h$ of a product of surfaces. The two fibering maps are inherited from the projections of $\Sigma_g \times \Sigma_h$ onto either factor. While Johnson's argument produces a bound on the number of possible fiberings of a surface bundle $E$ that is super-exponential in the Euler characteristic $\chi(E)$, until recently all known examples of surface bundles over surfaces had at most two fiberings, leaving a large gap between upper and lower bounds on the number of possible fiberings.

In \cite{salterconstruction}, the author gave a new method for constructing surface bundles over surfaces with multiple fiberings, including the first examples of bundles admitting an arbitrarily large number of fiberings. In fact, the methods of \cite{salterconstruction} are capable of producing families $E_n$ of surface bundles admitting exponentially many fiberings as a function of $\chi(E_n)$. The results of this paper can be seen as a complement to the work of \cite{salterconstruction}, in that our concern here is in addressing the question of when surface bundles over surfaces admit {\it unique} fiberings.\\

 A central theme in the study of surface bundles is the ``monodromy - topology dictionary''. For any reasonable base space $M$, there is a well-known correspondence (see, e.g. \cite{FM})
\begin{equation}\label{eqn:correspondence}
\left\{ \begin{array}{c} \text{Bundle-isomorphism classes of}\\ \text{oriented }\Sigma_g\text{-bundles over } M\end{array} \right\} 
\longleftrightarrow \left\{ \begin{array}{c}\text{Conjugacy classes of representations}\\ \pi_1(M) \to \Mod(\Sigma_g) \end{array}\right\}.
\end{equation}

This raises the question of translating between topological and geometric properties of surface bundles on the one hand, and on the other, algebraic or geometric properties of the monodromy representation. Certain entries in this dictionary are well-established, for instance Thurston's landmark result that a fibered 3-manifold $\Sigma_g \to M_\phi \to S^1$ admits a complete hyperbolic metric if and only if the monodromy is a so-called ``pseudo-Anosov'' element of $\Mod(\Sigma_g)$. In this paper we add to the dictionary by relating the cohomology ring of a surface bundle over a surface to its monodromy representation, and apply these results to give various obstructions for the surface bundle to admit more than one fibering.

From the perspective of the monodromy representation, the phenomenon of multiple fibering remains mysterious. The central result of this paper shows that there is a strong interaction between the existence of multiple fiberings and the theory of the Torelli group $\mathcal I_g$. Recall that the {\it Torelli group} is the kernel of the symplectic representation $\Psi: \Mod(\Sigma_g) \to \Sp_{2g}(\Z)$ and that the {\em Johnson kernel} $\mathcal K_g$ is defined as the group generated by Dehn twists $T_\gamma$ with $\gamma$ a {\em separating} curve. 

\begin{theorem}\label{theorem:double}
Let $\pi: E \to B$ be a surface bundle over a surface with monodromy in the Johnson kernel $\mathcal{K}_g$. If $E$ admits two distinct fiberings then $E$ is diffeomorphic to $B \times B'$, the product of the base spaces. In other words, any nontrivial surface bundle over a surface with monodromy in $\K_g$ admits a unique fibering.
\end{theorem}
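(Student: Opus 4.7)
My plan is to extract strong constraints on the cohomology ring of $E$ from the vanishing of the Johnson homomorphism on the monodromy, and then to leverage the existence of a second fibering to conclude that $E$ is diffeomorphic to a product.

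The first step is to determine the ring structure of $H^*(E; \Z)$ under the hypothesis that $\rho$ takes values in $\K_g$. Since $\K_g \subset \mathcal{I}_g$, the action of $\pi_1(B)$ on $H^*(\Sigma_g; \Z)$ is trivial, and the Leray--Serre spectral sequence for $\pi: E \to B$ has untwisted $E_2$-page $H^*(B) \otimes H^*(\Sigma_g)$. As $B$ is two-dimensional, the only potentially nonzero differentials are the transgressions $d_2: E_2^{0,q} \to E_2^{2,q-1}$; these vanish for Torelli monodromy (a consequence of the fact that the extension $1 \to \pi_1 \Sigma_g \to \pi_1 E \to \pi_1 B \to 1$ splits on abelianization). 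The spectral sequence therefore collapses, yielding an additive Kunneth isomorphism $H^*(E; \Z) \cong H^*(B) \otimes H^*(\Sigma_g)$. By the main cup product theorem of this paper, the multiplicative extension data is controlled by $\tau \circ \rho \in H^2(B; \wedge^3 H_1(\Sigma_g; \Z))$; this correction vanishes because $\K_g = \ker \tau$, so $H^*(E; \Z) \cong H^*(B \times \Sigma_g; \Z)$ as a graded ring.

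Suppose now $p_2: E \to B'$ is a second, distinct fibering with fiber $\Sigma_{g'}$. The pullback subspace $L' := p_2^*(H^1(B'; \Q)) \subset H^1(E; \Q)$ is totally isotropic in the sense that any three-fold cup product of elements of $L'$ vanishes (the target $H^3(B') = 0$), and the same holds for $L := \pi^*(H^1(B; \Q))$. Exploiting the product ring structure from the first step, I would argue that $L \cap L' = 0$: any nonzero class in the intersection would pull back nontrivially to both fibers, contradicting either the distinctness hypothesis of Proposition \ref{proposition:fiberings} or the non-degeneracy of the cup product pairing in the Kunneth ring. Dimension counting via Euler characteristic, $(2-2g)(2-2h) = (2-2g')(2-2h')$, combined with $\dim H^1(E; \Q) = 2g + 2h$, then forces $g' = h$, $h' = g$, and $L \oplus L' = H^1(E; \Q)$.

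Finally, the map $\varphi = (\pi, p_2): E \to B \times B'$ carries the Kunneth decomposition of $H^1(B \times B')$ isomorphically onto $L \oplus L' = H^1(E)$, and by the ring isomorphism of step one it induces an isomorphism on all of $H^*$. Since $E$ and $B \times B'$ are both aspherical ($g, h \ge 2$), $\varphi$ is a homotopy equivalence of $K(\pi, 1)$-spaces, and this upgrades to a diffeomorphism in our four-dimensional setting by standard rigidity for products of hyperbolic surfaces. The principal obstacle I anticipate is the middle step: rigorously establishing complementarity $L \oplus L' = H^1(E)$. Cohomology alone permits various ``tilted'' Lagrangian configurations, and ruling these out requires a careful interplay of the product cup product structure, the precise geometric meaning of distinct fiberings (Proposition \ref{proposition:fiberings}), and the integrality of the characteristic classes involved.
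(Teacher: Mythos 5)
Your outline reproduces the first stage of the argument (the ring isomorphism $H^*(E,\Z)\approx H^*(B)\otimes H^*(\Sigma_g)$ for $\K_g$-monodromy, which is Proposition \ref{proposition:cohoproduct}), and $L\cap L'=0$ is indeed available in general (Lemma \ref{lemma:direct}). The genuine gap is exactly the step you flag and then assert anyway: Euler characteristic together with $\dim H^1(E,\Q)=2g+2h$ does \emph{not} force $g'=h$, $h'=g$, nor $L\oplus L'=H^1(E,\Q)$. The numerical constraints are only $(g-1)(h-1)=(g'-1)(h'-1)$ and $h'\le g$; for instance $g=5$, $h=3$ is numerically compatible with a hypothetical second fibering with $g'=9$, $h'=2$, so no dimension count rules out the ``tilted'' configurations. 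Closing this gap is the actual content of the paper's proof, and it needs two inputs you do not supply: (i) the $\K_g$ cup-product structure is used to compute the second fiber class in $p_1$-coordinates, giving $[F_2]=d\,C$ with $P\equiv 0$ (Lemma \ref{lemma:P}), and primitivity of $[F_2]$ (Proposition \ref{proposition:morita}) then forces $d=\deg(p_1\times p_2)=1$ (Lemma \ref{proposition:d1}) — a point your proposal never addresses, although nonvanishing of this degree is needed even to start your final step; and (ii) the genus equality $g(F_1)=g(B_2)$ (Lemma \ref{proposition:gh}) is obtained not from the ring structure alone but from Hamenst\"adt's theorem that signature-zero surface bundles admit multisections (Theorem \ref{theorem:hamen}) together with the transfer/multisection splitting machinery (Proposition \ref{proposition:qsection}, Lemma \ref{proposition:annihillocation}): classes annihilating $p_2^*H^1(B_2)$ must lie in $\ker\hat\sigma^*$, and evaluating $x\smile y$ against $\hat\sigma_*[B_2]$ yields the contradiction when $g>h$.

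Your concluding step is also not sound as stated: a homotopy equivalence between a smooth $4$-manifold and a product of hyperbolic surfaces does not ``upgrade to a diffeomorphism by standard rigidity'' — smooth (and even topological, without Farrell--Jones) rigidity is not a routine tool in dimension $4$. The paper avoids this entirely: once $\deg(p_2|_{F_1})=1$ and the genera agree, $(p_2)_*:\pi_1F_1\to\pi_1B_2$ is an isomorphism, hence $(p_1\times p_2)_*$ is an isomorphism on $\pi_1$, the monodromy representation $\pi_1B_1\to\Out(\pi_1F_1)$ is therefore trivial, and the classification correspondence (\ref{eqn:correspondence}) produces a bundle isomorphism of $E$ with the trivial bundle, i.e.\ a diffeomorphism $E\approx B_1\times B_2$.
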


The surface bundles over surfaces of \cite{salterconstruction} can be constructed so as to have monodromy contained in $\mathcal I _g$. It follows that the hypothesis in Theorem \ref{theorem:double} that the monodromy be contained in $\mathcal K_g$ is effectively {\it sharp} with respect to the {\it Johnson filtration} (see Chapter 6 of \cite{FM} for the definition of the Johnson filtration).\\

Theorem \ref{theorem:double} is proved by first relating the monodromy representation of a surface bundle over a surface $E^4 \to B^2$ to the cohomology ring $H^*(E)$; it is then shown that obstructions to possessing alternative fiberings can be extracted from $H^*(E)$. In a similar spirit we also have the following general criterion which we believe to be of independent interest, for a surface bundle over a surface to possess a unique fibering.

Recall that if $\rho: G \to \GL(V)$ is a representation, then the {\em invariant} space $V^\rho$ is defined via
\[
V^\rho = \{v \in V: \rho(g)(v) = v \mbox{ for all } g \in G\}.
\]
The space of {\em co-invariants} $V_\rho$ of the representation is defined as
\[
V_\rho = V / W, \qquad \mbox{ where }\qquad W = \{v - \rho(g)(v) | v \in V, g \in G\}.
\]

\noindent \textbf{Theorem \ref{theorem:single}.} \textit{Let $p: E \to B$ be a surface bundle over a surface $B$ of genus $g \ge 2$ with monodromy representation $\rho: \pi_1 B \to \Mod(\Sigma_g)$. Suppose the space of invariant cohomology $(H^1(F,\Q))^\rho$ (equivalently, the coinvariant homology of the fiber $(H_1(F,\Q))_\rho$) vanishes. Then $E$ admits a unique fibering.
}\\

Recall that a surface bundle over $S^1$, viewed as the mapping torus $M$ of some diffeomorphism $\phi$, admits a unique fibering if and only if $b_1(M) = 1$. This is the case exactly when $(H_1(M, \Q))_\phi = 0$, so Theorem \ref{theorem:single} is the counterpart to this fact in dimension $4$. Moreover, a random element $\phi \in \Mod(\Sigma_g)$ satisfies $(H_1(M, \Q))_\phi=0$ (see \cite{rivin}). It easily follows that a generic monodromy representation will also have $(H_1(E, \Q))_\rho = 0$: ``most'' surface bundles over surfaces have a single fibering. The proof of Theorem \ref{theorem:single} is special to the case of surface bundles over surfaces and it is not clear if Theorem \ref{theorem:single} is true in greater generality.\\

The paper is organized as follows. In Section \ref{section:equivalence}, we give various characterizations of the notion of equivalence under consideration. In Section \ref{section:single}, we prove Theorem \ref{theorem:single}. Sections \ref{section:biproj} - \ref{section:proof} are devoted to the proof of Theorem \ref{theorem:double}. Section \ref{section:biproj} is devoted to a lemma in differential topology that features in later stages of the proof of Theorem \ref{theorem:double}. The technical heart of the paper is Section \ref{section:johnson}. In it, we first give an overview of the classical description of the Johnson homomorphism $\tau$ in terms of the intersection theory of surfaces in $3$-manifolds that fiber over $S^1$. Using this description of $\tau$ we then carry out a construction of $3$-manifolds embedded in surface bundles over surfaces that realizes the relationship between the Johnson homomorphism and the intersection product in the homology of the surface bundle. We give a complete description of the product structure in (co)homology for a surface bundle over a surface with monodromy in $\mathcal I_g$. These methods of Section \ref{section:johnson} extend to an arbitrary surface bundle over a surface, but we do not state them in this level of generality since we have no need for them here.

Section \ref{section:appendix} is devoted to some technical results concerning multisections of surface bundles, and their connection to splittings on rational cohomology. These results are used in the course of proving Theorem \ref{theorem:double}.

In Section \ref{section:proof} we turn finally to the proof of Theorem \ref{theorem:double}. The result follows from an analysis of the intersection product structure in $H_* E$ for a surface bundle over a surface $\Sigma_g \to E \to \Sigma_h$ with monodromy in $\K_g$. The results of Section \ref{section:johnson} are applied to show that when the monodromy of $\Sigma_g \to E \to \Sigma_h$ is contained in $\K_g$, then $E$, which necessarily has $H^*E \approx H^*\Sigma_g \otimes H^*\Sigma_h$ as an additive group, in fact has $H^*E \approx H^*\Sigma_g \otimes H^*\Sigma_h$ (with $\Z$ coefficients) as a graded ring. This condition is then exploited to prove Theorem \ref{theorem:double}. 

\para{Acknowledgements} The author would like to express his gratitude to Tom Church, Sebastian Hensel, Jonathan Hillman, Andy Putman, and Alden Walker for illuminating discussions at various stages of this work. He is grateful to an anonymous referee for many helpful suggestions. He would also like to extend his warmest thanks to Benson Farb for his extensive comments as well as his invaluable support from start to finish.


\section{Equivalence}\label{section:equivalence}
If $E$ is a smooth $n$-manifold and $p_i: E \to B_i,\ i = 1, \dots, k$ are projection maps for various fiber bundle structures on $E$, we can consider the product of all the projection maps:
\[
p_1 \times \dots \times p_k : E \to B_1 \times \dots \times B_k.
\]
In particular, if $E^4$ is the total space of a surface bundle over a surface with two fiberings, the {\em bi-projection} $p_1 \times p_2: E \to B_2 \times B_2$ is defined. As remarked in the introduction, ultimately we are concerned with fiberwise-diffeomorphism classes of surface bundles. However, it is convenient to consider a more restrictive notion of equivalence which will turn out to have the advantage of being describable purely on the level of the fundamental group.\\
\indent We say that two fiberings $p_1: E \to B_1$, $p_2: E \to B_2$ are {\em $\pi_1$-fiberwise diffeomorphic} if $(1)$ - they are fiberwise diffeomorphic, i.e. there exists a commutative diagram
\[
\xymatrix{
E \ar[r]^\phi \ar[d]_{p_1}		& E \ar[d]^{p_2}\\
B_1 \ar[r]_\alpha			& B_2
}
\]
with $\phi, \alpha$ diffeomorphisms, and $(2)$ - $\phi_*(\pi_1 F_1) = \pi_1 F_1$ (here, as always, $F_i$ denotes a fiber of $p_i$). Certainly if $p_1, p_2$ are $\pi_1$-fiberwise diffeomorphic bundle structures, then they are fiberwise-diffeomorphic in the usual sense. We are interested in this notion because we want to always regard the trivial bundle $\Sigma_g \times \Sigma_h$ as having two distinct fiberings. In the setting of fiberwise-diffeomorphism, the projections onto either factor of $\Sigma_g \times \Sigma_g$ yield equivalent fiberings via the factor-swapping map $\phi(x,y) = (y,x)$, which covers the identity on $\Sigma_g$, but $\phi_*(\pi_1(\Sigma_g \times \{p\})) \ne \pi_1(\Sigma_g \times \{p\})$. The following proposition asserts that $\pi_1$-fiberwise diffeomorphism classes are in correspondence with the fiber subgroups $\pi_1 F \normal \pi_1 E$. Recall that this is the setting in which F.E.A. Johnson proved his finiteness result (see \cite{FEA2}).

\begin{proposition}\label{proposition:fiberings}
Suppose $E$ is the total space of a surface bundle over a surface in two ways: $p_1: E \to B_1$ and $p_2: E \to B_2$. Let $F_1, F_2$ denote fibers of $p_1, p_2$ respectively. Then the following are equivalent:
\begin{enumerate}
\item The fiberings $p_1, p_2$ are $\pi_1$-fiberwise diffeomorphic.
\item The fiber subgroups $\pi_1 F_1, \pi_1 F_2 \le \pi_1 E$ are equal.
\end{enumerate}
If $ \deg (p_1 \times p_2) \ne 0$ then the bundle structures $p_1$ and $p_2$ are distinct. 
\end{proposition}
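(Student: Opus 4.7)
The plan is to handle the three components of the proposition in turn: (1) $\Rightarrow$ (2), (2) $\Rightarrow$ (1), and finally the degree criterion.

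For (1) $\Rightarrow$ (2): the diffeomorphism $\phi$ carries fibers of $p_1$ to fibers of $p_2$, so $\phi(F_1)$ is some fiber $F_2'$ of $p_2$. Because the fiber subgroup of a bundle over a connected base is normal in $\pi_1 E$ (it is the kernel of the induced map $\pi_1 E \to \pi_1 B_2$), it is independent of which fiber is chosen, and hence $\pi_1 F_2' = \pi_1 F_2$. Combined with the hypothesis $\phi_*(\pi_1 F_1) = \pi_1 F_1$, this yields $\pi_1 F_1 = \pi_1 F_2$.

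For (2) $\Rightarrow$ (1): write $N = \pi_1 F_1 = \pi_1 F_2$. Then the single quotient $\pi_1 E / N$ is simultaneously $\pi_1 B_1$ and $\pi_1 B_2$, and since closed aspherical surfaces are determined up to diffeomorphism by their fundamental group there is a diffeomorphism $\alpha: B_1 \to B_2$ realizing this identification. Pulling $p_2$ back along $\alpha$ produces a second $\Sigma_g$-bundle over $B_1$ whose monodromy---computed as the conjugation action of $\pi_1 B_1 = \pi_1 E / N$ on $N$ inside $\Out(N)$---agrees with that of $p_1$. Invoking the correspondence \eqref{eqn:correspondence} then yields a self-diffeomorphism $\phi: E \to E$ exhibiting $p_1$ and $p_2$ as bundle-isomorphic over $\alpha$. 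The invariance $\phi_*(\pi_1 F_1) = \pi_1 F_1$ follows immediately, since $\phi$ sends each $p_1$-fiber to a $p_2$-fiber and both fiber subgroups equal $N$.

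For the degree criterion I argue by the contrapositive. A standard preimage-of-regular-value computation identifies $\deg(p_1 \times p_2)$ with the algebraic intersection number $[F_1] \cdot [F_2]$ of the fiber classes in the oriented $4$-manifold $E$. Assume $p_1, p_2$ are $\pi_1$-fiberwise diffeomorphic, so by the equivalence just established $\pi_1 F_1 = \pi_1 F_2 = N$. Each $p_i$ factors up to homotopy through the classifying map $E \to K(\pi_1 E / N, 1)$, and the two factorizations $B_1, B_2 \simeq K(\pi_1 E / N, 1)$ differ only by a homotopy equivalence of the target; consequently, up to sign, $p_1^*$ and $p_2^*$ agree on the top cohomology class of the base, and by Poincar\'e duality in $E$ we conclude $[F_1] = \pm [F_2]$. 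Since the self-intersection of a fiber vanishes---its normal bundle in $E$ is trivial, or equivalently it can be pushed off itself onto a disjoint nearby fiber---one has $[F_1] \cdot [F_2] = \pm [F_1]^2 = 0$, and hence $\deg(p_1 \times p_2) = 0$.

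The step I expect to be most delicate is (2) $\Rightarrow$ (1): I must take care that the identification of monodromies really allows one to invoke \eqref{eqn:correspondence} to produce an honest self-diffeomorphism of $E$, as opposed to merely a bundle isomorphism between a priori distinct total spaces. A secondary subtlety is making the intersection-theoretic identity $\deg(p_1 \times p_2) = [F_1] \cdot [F_2]$ precise with the correct orientation and transversality conventions.
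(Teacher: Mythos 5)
Your proof of the equivalence $(1) \Leftrightarrow (2)$ is essentially the paper's argument: both directions rest on the observation that equal fiber subgroups give the same extension $1 \to N \to \pi_1 E \to \pi_1 E/N \to 1$, hence the same conjugation-induced monodromy in $\Out(N) \approx \Mod(\Sigma_g)$, and then the correspondence \eqref{eqn:correspondence} produces the bundle isomorphism; your extra care about pulling $p_2$ back along $\alpha$ so that one genuinely gets a self-diffeomorphism of $E$ is exactly the point the paper handles by identifying $B_1$ with $B_2$ and taking $\phi$ to cover the identity. Where you genuinely diverge is the degree criterion. The paper also argues the contrapositive, but it does so homotopically: using asphericity of the base it shows $(f\times \id)\circ(p_1\times p_2)$ is (homotopic to) a map into the diagonal of $B_2 \times B_2$, hence non-surjective and of degree zero, and concludes since $f \times \id$ is a homotopy equivalence. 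You instead argue cohomologically: $\deg(p_1\times p_2) = \pm\langle p_1^*\eta_1 \smile p_2^*\eta_2, [E]\rangle = \pm\, [F_1]\cdot[F_2]$, and since $p_2 \simeq f \circ p_1$ (asphericity of the base again) one gets $[F_2] = \pm[F_1]$, whence the product vanishes by $[F_1]^2 = 0$. Both are correct; your route has the advantage of making explicit the identity $\deg(p_1\times p_2) = [F_1]\cdot[F_2]$, which foreshadows Proposition \ref{proposition:degree} and the role of $[F_1]\cdot[F_2]$ later in the paper (note that Proposition \ref{proposition:degree} itself is only stated under $\deg \ne 0$, so you are right to supply the identity independently, e.g.\ via the Poincar\'e duality computation rather than citing it), while the paper's route avoids duality and self-intersection bookkeeping entirely. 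One shared caveat: both your argument and the paper's use that the base surfaces are aspherical (the $K(G,1)$ property, and for you also Dehn--Nielsen--Baer to upgrade the isomorphism $\pi_1 B_1 \cong \pi_1 B_2$ to a diffeomorphism), which is the implicit standing assumption of positive base genus.
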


\begin{proof} 
First suppose that $p_1$ and $p_2$ are equivalent. Appealing to the long exact sequence in homotopy, we see that
\[
\xymatrix{
1 \ar[r]	& \pi_1 F_1 \ar[r] \ar[d]_{\phi_*}	& \pi_1 E \ar[r] \ar[d]_{\phi_*}	& \pi_1 B_1 \ar[r] \ar[d]_{\alpha_*} 	& 1\\
1 \ar[r]	& \pi_1 F_2 \ar[r]			& \pi_1 E \ar[r]				& \pi_1 B_2 \ar[r]			& 1.
}
\]
By assumption $\phi_*(\pi_1 F_1) = \pi_1 F_1$, so that (\ref{proposition:fiberings}.1) $\implies$ (\ref{proposition:fiberings}.2) as claimed.\\
\indent Conversely, suppose that $\pi_1 F_1 = \pi_1 F_2$. Therefore the bundle structures $p_1$ and $p_2$ give rise to the same splitting
\[
1 \to \pi_1 F \to \pi_1 E \to \pi_1 B \to 1
\]
on fundamental group. The monodromy for each bundle can be obtained from this sequence via the map $\pi_1 B \to \Out(\pi_1 F) \approx \Mod(\Sigma_g)$. This shows that the monodromies for the two bundle structures are conjugate, and so via the correspondence (\ref{eqn:correspondence}), there is a bundle-isomorphism $\phi: E \to E$ covering the identity on $B$. 
To see that $\phi_*(\pi_1 F_1) = \pi_1 F_1$, consider the induced map on the long exact sequence in homotopy coming from $\phi$:
\[
\xymatrix{
1 \ar[r]	& \pi_1 F_1 \ar[r] \ar[d]_{\phi_*}	& \pi_1 E \ar[r] \ar[d]_{\phi_*}	& \pi_1 B \ar[r] \ar@{=}[d] 	& 1\\
1 \ar[r]	& \pi_1 F_2 \ar[r]				& \pi_1 E \ar[r]				& \pi_1 B \ar[r]			& 1.
}
\]
This shows $\phi_*(\pi_1 F_1) = \pi_1 F_2$, and $\pi_1 F_1 = \pi_1 F_2$ by assumption.\\

Having established the equivalence of $(\ref{proposition:fiberings}.1)$ and $(\ref{proposition:fiberings}.2)$, it remains to show that if $\deg (p_1 \times p_2) \ne 0$, then $p_1$ and $p_2$ are distinct. We establish the contrapositive. Suppose that $\pi_1 F_1 = \pi_1 F_2$. For $i = 1,2$, we view $\pi_1 B_i$ as the quotient $\pi_1 B_i \approx \pi_1 E / \pi_1 F_i$. If $p_1 \times p_2$ is the bi-projection, then in this notation,
\[
(p_1 \times p_2)_*: \pi_1 E \to \pi_1 B_1 \times \pi_1 B_2
\]
is given by
\[
(p_1 \times p_2)_*(x) = (x\,\pi_1 F_1, x\,\pi_1 F_2) = ([x],[x]),
\]
where $[x] = x \pmod{\pi_1 F_1} = x\pmod{\pi_1 F_2}$. As $\pi_1 F_1 = \pi_1 F_2$, the quotients $\pi_1 B_1$ and $\pi_1 B_2$ are isomorphic, and as they are $K(G,1)$'s, there is a homotopy equivalence
\[
f: B_1 \to B_2.
\]
Let $g$ be the map 
\[
g = (f \times \id) \circ (p_1 \times p_2): E \to B_2 \times B_2.
\] 
By the above,
\[
\im(g)= \Delta = \{(x,x) \mid x \in B_2\}.
\]
Being non-surjective, $g$ has degree $0$. As $p_1 \times p_2$ is the composition of $g$ with a homotopy equivalence, we conclude that also $\deg(p_1 \times p_2) = 0$.
\end{proof}

In general the condition $\deg (p_1 \times p_2) = 0$ on a bi-projection does not imply that the associated fiberings are equivalent. However, in the setting of the Johnson kernel, this is indeed the case.
\begin{proposition} \label{proposition:jfiber}
Suppose $E$ is the total space of a surface bundle over a surface in two ways: $p_1: E \to B_1$ and $p_2: E \to B_2$.  Let $F_1, F_2$ denote fibers of $p_1, p_2$ respectively. Suppose that $\rho_1: \pi_1 B_1 \to \Mod(F_1)$ is contained in the Johnson kernel $\K_g$. Then the following are equivalent:
\begin{enumerate}
\item The fiberings $p_1, p_2$ are not $\pi_1$-fiberwise diffeomorphic.
\item The fiber subgroups $\pi_1 F_1, \pi_1 F_2 \le \pi_1 E$ are distinct.
\item $\deg (p_1 \times p_2) \ne 0$.
\item $E$ is diffeomorphic to $B_1 \times B_2$.
\end{enumerate}
\end{proposition}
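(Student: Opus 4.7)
The plan is to close up the cycle of implications $(3) \Rightarrow (1) \Leftrightarrow (2) \Rightarrow (4) \Rightarrow (3)$. The equivalence $(1) \Leftrightarrow (2)$ and the implication $(3) \Rightarrow (1)$ are immediate from Proposition \ref{proposition:fiberings} and do not require the Johnson-kernel hypothesis on $\rho_1$. For $(1) \Rightarrow (4)$ I would simply cite Theorem \ref{theorem:double}, the central result of the paper, whose proof occupies Sections \ref{section:biproj}--\ref{section:proof}; modulo that theorem, the only new work here is $(4) \Rightarrow (3)$.

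For $(4) \Rightarrow (3)$, I plan to recast the degree of the bi-projection as an intersection number. Poincar\'e duality identifies $p_i^*[B_i] \in H^2(E)$ with the dual of the fiber class $[F_i] \in H_2(E)$, so
\[
\deg(p_1 \times p_2) = \langle p_1^*[B_1] \cup p_2^*[B_2],\,[E] \rangle = [F_1]\cdot [F_2],
\]
the algebraic intersection of the two fiber surfaces in $E$. Using the diffeomorphism $E \cong B_1 \times B_2$ supplied by $(4)$, I would compute this intersection inside the product. The target claim is that, up to isotopy, $F_1$ and $F_2$ correspond respectively to the two coordinate surfaces $\{*\}\times B_2$ and $B_1\times\{*\}$, which meet transversely in a single point, so that $[F_1]\cdot[F_2] = \pm 1 \ne 0$.

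The main obstacle is justifying this isotopy claim, which amounts to a rigidity statement for surface-bundle structures on a product of two closed surfaces of genus at least $2$: every such fibering is $\pi_1$-fiberwise diffeomorphic to one of the two coordinate projections. I would try to extract this either from a direct classification of $\pi_1$-injective closed surface subgroups of the product $\pi_1 B_1 \times \pi_1 B_2$ (they must project with finite-index image onto one of the two factors and trivially onto the other) or by applying Johnson's finiteness result \cite{FEA2} to $B_1 \times B_2$ and verifying that the two coordinate projections exhaust the finite list. Granting this, I would invoke $(1)$ (equivalently $(2)$) to rule out the degenerate case in which $F_1$ and $F_2$ correspond to the \emph{same} coordinate projection, since then the fiber subgroups would agree in $\pi_1 E$. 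The genus match $\chi(F_i) = \chi(B_{3-i})$ forced by $\chi(E) = \chi(B_1)\chi(B_2)$ then pins down the labelling, completing the intersection calculation and the proof.
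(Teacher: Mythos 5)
Your logical skeleton agrees with the paper's for three of the four links: the equivalence $(1)\Leftrightarrow(2)$ and the implication $(3)\Rightarrow(1)$ come from Proposition \ref{proposition:fiberings}, and $(1)\Rightarrow(4)$ is Theorem \ref{theorem:double}, exactly as in Remark \ref{remark:wrapup}. Where you genuinely diverge is in closing the loop. The paper proves $(1)\Rightarrow(3)$ directly and homologically: Lemma \ref{lemma:P} and Lemma \ref{proposition:ef2} show that in the $\K_g$ setting $[F_2]=d\,C$ with $d=[F_1]\cdot[F_2]$, and primitivity of $[F_2]$ forces $d=1$ (Lemma \ref{proposition:d1}); this occurs as a step \emph{inside} the proof of Theorem \ref{theorem:double} and yields the sharper fact $\deg(p_1\times p_2)=1$, which is reused later (degree-one maps between fibers, Lemma \ref{proposition:gh}). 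You instead take Theorem \ref{theorem:double} as a black box and aim for $(4)\Rightarrow(3)$ by identifying $[F_1],[F_2]$ with the two coordinate classes in $H_2(B_1\times B_2)$; granted the theorem, this is a legitimate and more geometric route, and the homological computation it requires (each $[F_i]$ is $\pm$ a factor class, so $[F_1]\cdot[F_2]=\pm1$) does go through.

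Two points need repair. First, the classification you lean on is misstated: it is false that every $\pi_1$-injective closed surface subgroup of $\pi_1B_1\times\pi_1B_2$ projects trivially to one factor (diagonal copies of a surface group are counterexamples). What you need, and what is true, is the classification of \emph{normal} surface subgroups, i.e.\ of fiber subgroups: if $N\normal G_1\times G_2$ and $N\cap G_1=1$, then $[N,G_1]\le N\cap G_1=1$ forces $N\le G_2$, where normality, finite generation, and torsion-freeness of the quotient surface group give $N=G_2$; while $N\cap G_i\ne 1$ for both $i$ would place a $\Z\times\Z$ inside the hyperbolic surface group $N$ (all fiber and base genera here are at least $2$ by an Euler characteristic and $\pi_1$ argument). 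Appealing to Johnson's finiteness theorem does not substitute for this: finiteness of the list is not knowledge of the list. Second, and structurally: you invoke $(1)$/$(2)$ inside your proof of $(4)\Rightarrow(3)$ to exclude the case that $F_1,F_2$ correspond to the same coordinate projection. That exclusion is unavoidable, since $(4)$ alone cannot imply $(3)$ (take $p_1=p_2$ equal to one projection of a product, where $\deg(p_1\times p_2)=0$), but it means your chain actually delivers $(1)\Leftrightarrow(2)\Leftrightarrow(3)$ together with $(1)\Rightarrow(4)$, not an unconditional $(4)\Rightarrow(3)$. In fairness this is essentially what the paper itself establishes, as Remark \ref{remark:wrapup} handles the $(4)$-direction only by citing Theorem \ref{theorem:double}; so your argument lands in the same place, but you should present it as the three-way equivalence plus the implication to $(4)$ rather than as a four-term cycle, and note that it recovers only $\deg(p_1\times p_2)=\pm1$ rather than the paper's exact value $1$.
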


The additional assertions in Proposition \ref{proposition:jfiber} will be proved in the course of establishing Theorem \ref{theorem:double} (see Remark \ref{remark:wrapup}). \\

\section{Surface bundles over surfaces with unique fiberings}\label{section:single}

In this section, we prove Theorem \ref{theorem:single}. The additive structure of $H^* E$ is central to everything that follows in the paper, and so we begin with a review of the relevant results. The following theorem was formulated and proved by Morita in \cite{morita} for the case of field coefficients of characteristic not dividing $\chi(F)$; subsequently this was improved to integral coefficients in the cohomological setting by Cavicchioli, Hegenbarth and Repov\v{s} in \cite{CHR}.
\begin{proposition}[{\bf Morita, Cavicchioli - Hegenbarth - Repov\v{s}}]\label{proposition:morita}
The Serre spectral sequence (with twisted coefficients) of any surface bundle $F \to E \to B$ collapses at the $E_2$ page. Consequently, there are noncanonical isomorphisms for all $k$
\begin{align*}
H_k (E, \Q) &= H_k (B,\Q) \oplus H_{k-1}(B, H_1 (F, \Q)) \oplus H_{k-2} (B, \Q)\\
H^k (E, \Z) &= H^k (B, \Z) \oplus H^{k-1}(B, H^1 (F, \Z)) \oplus H^{k-2}(B,\Z)
\end{align*}
The $H_{k-2} B$ summand of $H_k E$ is canonical, and is realized by the Gysin map $p^!$ which associates to a homology class $x \in B$ the induced sub-bundle $E_x$ sitting over $x$. Similarly, the $H^k B$ summand is canonical via the pullback map $p^*: H^k B \to H^k E$.\\
\indent If $F \to E \to B$ has monodromy in $\mathcal I_g$, then the coefficient system is untwisted and $H^*(E,\Z) \approx H^*(B, \Z) \otimes H^*(F, \Z)$ additively. In particular, $H^*(E, \Z)$ is torsion free, and so by the universal coefficients theorem, there is also an isomorphism $H_*(E, \Z) \approx H_*(B, \Z) \otimes H_*(F, \Z)$. 
\end{proposition}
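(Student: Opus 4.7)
The plan is to analyze the Serre spectral sequence with twisted coefficients,
\[
E_2^{p,q} = H^p(B; \mathcal{H}^q(F, \Z)) \Longrightarrow H^{p+q}(E, \Z),
\]
and show it degenerates at the $E_2$ page. Since $F = \Sigma_g$ is a closed oriented surface, only three rows $q = 0, 1, 2$ are nonzero, and the monodromy preserves orientation, so $\mathcal{H}^0$ and $\mathcal{H}^2$ are the constant $\Z$ while $\mathcal{H}^1$ is the $\pi_1 B$-module $H^1(F, \Z)$ coming from the symplectic representation of the monodromy. With only three nonzero rows the only potentially nontrivial differentials are the two transgressions $d_2: E_2^{p, 1} \to E_2^{p+2, 0}$ and $d_2: E_2^{p,2} \to E_2^{p+2, 1}$, together with $d_3: E_3^{p,2} \to E_3^{p+3, 0}$.

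I would next exploit the multiplicative structure of the spectral sequence and Poincar\'e duality on $F$. The cup product $H^1(F) \otimes H^1(F) \to H^2(F)$ realizes a generator of $H^2(F)$ as a product of degree-one classes, so by the Leibniz rule the vanishing of $d_2$ on $E_2^{*,1}$ forces the vanishing of both $d_2$ and $d_3$ on $E_*^{*,2}$. The main obstacle is thus to show the transgression $d_2: E_2^{p,1} \to E_2^{p+2,0}$ is zero. Rationally this follows from Morita's idea of dividing the Euler class $e(T_v E) \in H^2(E, \Z)$ of the vertical tangent bundle by $\chi(F) = 2 - 2g$ to produce a class in $H^2(E, \Q)$ restricting to the fundamental class of the fiber; this identifies the Poincar\'e dual of a point in $H^2(F, \Q)$ as a permanent cycle and kills the transgression by multiplicativity. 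Integrally $\chi(F)$ is not invertible, and the CHR refinement uses a more delicate obstruction-theoretic argument to produce an integral lift of this fiber class in $H^2(E, \Z)$; this is the technical heart of the result.

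Finally, under the Torelli hypothesis the story simplifies. Since $\rho(\pi_1 B) \subseteq \mathcal I_g$ acts trivially on $H^*(F, \Z)$, the local systems $\mathcal{H}^q$ are all constant and $E_2^{p,q} = H^p(B, \Z) \otimes H^q(F, \Z)$. The collapse proved above therefore yields an additive filtration on $H^*(E, \Z)$ with associated graded $H^*(B, \Z) \otimes H^*(F, \Z)$; this graded piece is torsion-free as a $\Z$-module, so $H^*(E, \Z)$ is torsion-free as well and the filtration splits noncanonically. The corresponding tensor-product decomposition of $H_*(E, \Z)$ follows from the universal coefficient theorem.
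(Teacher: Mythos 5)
First, a point of comparison: the paper does not prove Proposition \ref{proposition:morita} at all — it quotes it from Morita \cite{morita} and Cavicchioli--Hegenbarth--Repov\v{s} \cite{CHR} — so your sketch is being measured against the standard arguments in those references. Your architecture (three nonzero rows, Morita's trick of dividing the Euler class of the vertical tangent bundle by $\chi(F)$ rationally, deferring the integral refinement to CHR, then the untwisted, torsion-free analysis in the Torelli case) is the right one. But one step as written would fail: you claim that surjectivity of the fiber-level cup product $H^1(F)\otimes H^1(F)\to H^2(F)$ plus the Leibniz rule lets you deduce vanishing of $d_2,d_3$ on the $q=2$ row from vanishing of $d_2$ on the $q=1$ row. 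For that you would need products of row-one classes to generate $E_2^{p,2}=H^p(B;\mathcal{H}^2)$, i.e.\ surjectivity of $\bigoplus_{i+j=p}H^i(B;\mathcal{H}^1)\otimes H^j(B;\mathcal{H}^1)\to H^p(B;\mathcal{H}^2)$, and this is false in general: already at $p=0$ the target is $H^2(F)^{\rho}\cong\Z$ while the source factors through $(H^1F)^{\rho}\otimes (H^1F)^{\rho}$, which vanishes exactly in the generic situation $(H^1(F,\Q))^{\rho}=0$ that the paper itself emphasizes. Fiber-level surjectivity of the cup product does not descend to the $E_2$ page with local coefficients. Relatedly, your assertion that the permanent cycle coming from $e/\chi(F)$ ``kills the transgression by multiplicativity'' is true but is precisely the point needing an argument, and it is this permanent cycle — not the $q=1$ row — that controls the $q=2$ row.

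The repair runs the multiplicativity the other way. Let $u\in E_2^{0,2}\otimes\Q$ be the fiber class; since $e/\chi(F)$ restricts to the generator of $H^2(F;\Q)$, the edge homomorphism hits $u$, so $u$ is a permanent cycle. Because $\mathcal{H}^2$ is the trivial system, cup product with $u$ gives an isomorphism $E_2^{p,0}\to E_2^{p,2}$, so the whole $q=2$ row consists of permanent cycles: $d_r(b\cdot u)=\pm b\cdot d_r(u)=0$ for $r=2,3$. For the $q=1$ row, given $x\in E_2^{p,1}$ one has $x\cdot u\in E_2^{p,3}=0$, hence $0=d_2(x\cdot u)=d_2(x)\cdot u$, and injectivity of $(-)\cdot u$ on the bottom row forces $d_2(x)=0$; equivalently, the Gysin identity $p_!(e\smile p^*x)=\chi(F)\,x$ shows $p^*$ is rationally injective, so no differential can land in the bottom row. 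With this adjustment your sketch establishes the rational statement; the integral collapse and splitting are CHR's theorem, which you (like the paper) simply cite, and your final Torelli paragraph — constant coefficients, torsion-free associated graded, noncanonical splitting, universal coefficients for homology — is fine, granted that $B$ is a closed surface so that $H^*(B,\Z)$ is torsion-free.
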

Because the surface bundles we will be considering in this paper have monodromy lying in $\mathcal{I}_g$, we will subsequently take all coefficients to be $\Z$ without further mention. A remark which is obvious from Proposition \ref{proposition:morita} is that if $*$ generates $H_0 B$, then $p^!(*)$ is a primitive class; we will use this fact later on. Here and throughout, we will use the notation 
\[
[F] = p^!(*) \in H_2 E
\]
to denote the (pushforward of the) fundamental class of the fiber.

The following result is a well-known application of the theory of the Gysin homomorphism, and we state it without proof.
\begin{proposition}\label{proposition:gysin}
Let $p: E \to B$ be a surface bundle with fiber $F$. If $\chi(F) \ne 0$, then there are injections
\begin{align*}
p^*: 	&H^*(B, \Q) \to H^*(E, \Q)\\
p^!:	&H_k(B, \Q) \to H_{k+2}(E, \Q).
\end{align*}
In the case where $H_*(E,\Z)$ is torsion-free, the same statements hold with $\Z$ coefficients. In particular, this is true whenever $E$ has monodromy lying in $\mathcal I_g$, since in this case $H^*(E, \Z)$ is isomorphic to $H^*(F, \Z) \otimes H^*(B, \Z)$ as an abelian group (see Proposition \ref{proposition:morita}).
\end{proposition}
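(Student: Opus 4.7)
The plan is to derive both injectivity statements from a single computation involving the vertical tangent bundle. Let $\epsilon \in H^2(E)$ denote the Euler class of the subbundle $T_v E \subset TE$ of vectors tangent to the fibers, and let $p_! : H^{k+2}(E) \to H^k(B)$ be the integration-along-the-fiber (umkehr) map associated to the oriented surface bundle. Since $\epsilon$ restricts on each fiber $F_x$ to the Euler class of $TF_x$, whose integral over $F_x$ is the Euler characteristic, we have $p_!(\epsilon) = \chi(F) \in H^0(B) \cong \Z$.

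Next, I would apply the projection formula $p_!(p^*(b) \cup \alpha) = b \cup p_!(\alpha)$ with $\alpha = \epsilon$, which yields
\[
p_!(p^*(b) \cup \epsilon) = \chi(F) \cdot b.
\]
Thus $p^*(b) = 0$ forces $\chi(F) \cdot b = 0$, and over $\Q$ this gives $b = 0$ since $\chi(F) \ne 0$. To transport the conclusion to $p^!$, I would invoke Poincar\'e duality. Since $B$ is a closed oriented surface and $E$ a closed oriented $4$-manifold, there are canonical isomorphisms $PD_B : H_k(B, \Q) \cong H^{2-k}(B, \Q)$ and $PD_E : H_{k+2}(E, \Q) \cong H^{2-k}(E, \Q)$, and the geometric Gysin map (taking a cycle to the sub-bundle over it, as described in Proposition \ref{proposition:morita}) satisfies $PD_E \circ p^! = p^* \circ PD_B$. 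Injectivity of $p^!$ is then equivalent to that of $p^*$.

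For integer coefficients under the torsion-freeness assumption on $H_*(E, \Z)$, the projection formula identity $\chi(F) \cdot b = p_!(p^*(b) \cup \epsilon)$ still holds over $\Z$; since $B$ is a closed surface, $H^*(B, \Z)$ is torsion-free, so $\chi(F) \cdot b = 0$ forces $b = 0$. The Poincar\'e duality argument likewise carries through integrally under the stated hypothesis. The one step requiring care is the compatibility $PD_E \circ p^! = p^* \circ PD_B$ between the geometric description of $p^!$ and the pullback on cohomology, which I expect to be the most delicate point; however, this is classical and follows either by taking it as the definition of $p^!$, or by a transversality argument comparing the sub-bundle $E_x$ to the preimage of a generic cochain representative of $p^*PD_B(x)$.
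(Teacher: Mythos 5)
The paper states this proposition without proof (``a well-known application of the theory of the Gysin homomorphism''), so there is no in-paper argument to compare against; your job was to supply the standard one, and you have. The chain $p_!(\epsilon)=\chi(F)$, projection formula, and Poincar\'e duality identifying the geometric Gysin map with $PD_E^{-1}\circ p^*\circ PD_B$ is exactly the intended route, and each step is sound: fiber integration and the projection formula hold integrally for an oriented surface bundle, and the compatibility you flag as delicate is indeed classical (it can be taken as the definition of $p^!$, or checked by a transversality argument as you suggest). One small remark: your argument actually proves more than the stated $\Z$-coefficient claim, since $H^*(B,\Z)$ is torsion-free for a closed oriented surface and Poincar\'e duality is an integral isomorphism for closed oriented manifolds, so injectivity of $p^*$ and $p^!$ over $\Z$ follows without assuming $H_*(E,\Z)$ torsion-free; the hypothesis in the proposition is thus not needed for your proof, which is harmless. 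No gaps.
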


For surface bundles over surfaces with multiple fiberings, there is an extension of the previous result.

\begin{lemma}\label{lemma:direct}
Let $E$ be a 4-manifold with two distinct surface bundle structures $p_1: E \to B_1$ and  $p_2: E \to B_2$. Then the intersection
\[
p_1^*(H^1(B_1, \Q)) \cap p_2^*(H^1(B_2, \Q)) = \{0\},
\]
and so by Proposition \ref{proposition:gysin}, there is a canonical injection 
\[
p_1^* \times p_2^*: H^1(B_1, \Q)) \oplus H^1(B_2, \Q) \into H^1(E, \Q).
\]
\end{lemma}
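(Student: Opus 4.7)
The plan is to reduce the cohomological intersection statement to a $\pi_1$-level surjectivity claim about the induced map of a fiber into the opposite base, and then establish that claim using the rigidity of normal subgroups of closed surface groups.

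Identifying $H^1(E,\Q) = \Hom(H_1(E,\Q),\Q)$, a class $\gamma = p_1^*\alpha = p_2^*\beta$ in the intersection factors through both quotient maps $p_{i*}\colon H_1(E,\Q) \twoheadrightarrow H_1(B_i,\Q)$, and hence annihilates $V_1 + V_2 \subseteq H_1(E,\Q)$, where $V_i := \iota_{i*}H_1(F_i,\Q) = \ker(p_{i*})$. By Proposition \ref{proposition:morita}, $p_{1*}$ induces an isomorphism $H_1(E,\Q)/V_1 \cong H_1(B_1,\Q)$, so the identity $V_1 + V_2 = H_1(E,\Q)$---which would force $\gamma = 0$---reduces to showing that the restriction $(p_1|_{F_2})_*\colon H_1(F_2,\Q) \to H_1(B_1,\Q)$ is surjective.

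To establish this surjectivity, I observe that $N := p_{1*}(\iota_{2*}\pi_1 F_2) \leq \pi_1 B_1$ is a finitely generated normal subgroup (normal because $\iota_{2*}\pi_1 F_2 \vartriangleleft \pi_1 E$ and $p_{1*}$ is surjective). Since $B_1$ has genus at least two, $\pi_1 B_1$ is a non-elementary torsion-free hyperbolic group, and a standard structural result---e.g.\ via Gromov, or Scott's core theorem combined with an analysis of ends---implies that finitely generated normal subgroups of $\pi_1 B_1$ are either trivial or of finite index. In the finite-index case, the rational transfer argument shows that $N$ surjects onto $H_1(B_1,\Q)$, giving exactly what we want.

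It remains to rule out $N = 1$, which is equivalent to $\iota_{2*}\pi_1 F_2 \subseteq \iota_{1*}\pi_1 F_1$. By the distinctness hypothesis (Proposition \ref{proposition:fiberings}), this inclusion is proper, and a second application of the same theorem to $\pi_1 F_2 \vartriangleleft \pi_1 F_1$ forces $\pi_1 F_2$ to have finite index in $\pi_1 F_1$. But then $p_{2*}(\iota_{1*}\pi_1 F_1) \cong \pi_1 F_1/\pi_1 F_2$ is a nontrivial finite normal subgroup of $\pi_1 B_2$, and a third application of the theorem makes it finite-index, forcing $\pi_1 B_2$ to be finite---which is absurd for a surface of positive genus. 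The principal technical obstacle is precisely this iterated use of the normal-subgroup theorem to exclude the ``nested fibers'' degenerate case, which is where the distinctness hypothesis is essential. Once the intersection is trivial, the canonical injection $p_1^* \times p_2^*$ follows at once by combining this with the injectivity of each $p_i^*$ supplied by Proposition \ref{proposition:gysin}.
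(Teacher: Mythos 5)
Your argument is correct and is essentially the paper's: both hinge on the fact that the cross-projection image of one fiber group in the other base group is a finitely generated normal subgroup of a genus $\ge 2$ surface group, hence trivial or of finite index, with the nested-fiber degenerate case excluded via the distinctness hypothesis and the normal-subgroup/torsion-freeness property. The paper packages this as Lemma \ref{lemma:findex} (the subgroup $(\pi_1 F_1)(\pi_1 F_2)$ has finite index in $\pi_1 E$) plus the vanishing of rational characters on finite-index subgroups, whereas you dualize to rational $H_1$ and invoke a transfer argument---the same proof in homological clothing.
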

\begin{proof}
By the universal coefficients theorem, for any space $X$ there is an identification
\[
H^1(X, \Q) \approx \Hom(\pi_1 X, \Q).
\]
Under this identification, a character $\alpha \in \Hom(\pi_1 B_i, \Q)$ is pulled back to $p_i^*(\alpha) \in \Hom(\pi_1 E, \Q)$ by precomposition with $(p_i)_*$. In particular, $p_i^* (\alpha)$ vanishes on $\pi_1 F_i = \ker (p_i)_*$. Therefore, any character $\alpha \in p_1^*(H^1(B_1, \Q)) \cap p_2^*(H^1(B_2, \Q))$ must vanish on the subgroup generated by $(\pi_1 F_1)( \pi_1 F_2)$.\\
\indent By Lemma \ref{lemma:findex} below, $(\pi_1 F_1)( \pi_1 F_2)$ has finite index in $\pi_1 E$. For any group $\Gamma$, any character $\alpha: \Gamma \to \Q$ vanishing on a finite-index subgroup must vanish identically, proving the claim.
\end{proof}

\begin{lemma}\label{lemma:findex}
Let $E$ be a surface bundle over a surface with two distinct fiberings $p_i: E \to B_i$; let the fibers be denoted $F_1$ and $F_2$, respectively. Then $(\pi_1 F_1)( \pi_1 F_2)$ has finite index in $\pi_1 E$. 
\end{lemma}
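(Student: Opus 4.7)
The plan is to reformulate the conclusion in terms of the image $H \coloneq (p_1)_*(\pi_1 F_2) \le \pi_1 B_1$ and then appeal to a standard structural result about surface groups. Setting $N_i \coloneq \pi_1 F_i$, each $N_i$ is normal in $\pi_1 E$ with $\pi_1 E / N_i \cong \pi_1 B_i$. Since $N_2$ is normal and $(p_1)_*$ is surjective, $H$ is a normal subgroup of $\pi_1 B_1$, and it is finitely generated as the image of the finitely generated group $\pi_1 F_2$. The second isomorphism theorem identifies $N_1 N_2 / N_1$ with $H$, so
\[
[\pi_1 E : N_1 N_2] = [\pi_1 B_1 : H],
\]
and it suffices to prove that $H$ has finite index in $\pi_1 B_1$.

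For this I would invoke the following well-known structural fact, valid under the paper's standing assumption that base surfaces have genus at least $2$: a finitely generated, nontrivial normal subgroup of the fundamental group of a closed surface of genus $\ge 2$ has finite index. The quick justification is that the covering $\widetilde B_1 \to B_1$ associated to such a subgroup would be a noncompact surface, so the subgroup would be finitely generated free; combining normality with the hyperbolicity of $\pi_1 B_1$ and the triviality of its center then forces the subgroup to be trivial. Applied to $H \normal \pi_1 B_1$, this reduces us to the dichotomy that $H$ is either of finite index (in which case we are done) or trivial.

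It remains to exclude the possibility $H = \{1\}$, which is equivalent to $N_2 \subseteq N_1$. If $N_1 = N_2$, then Proposition \ref{proposition:fiberings} asserts the fiberings are $\pi_1$-fiberwise diffeomorphic, contradicting the hypothesis that they are distinct. If $N_2 \subsetneq N_1$ strictly, then $N_1 / N_2$ is a nontrivial, finitely generated normal subgroup of $\pi_1 E / N_2 = \pi_1 B_2$, with infinite quotient $(\pi_1 E/N_2)/(N_1/N_2) \cong \pi_1 B_1$, again contradicting the surface-group fact above, now applied to $\pi_1 B_2$. The main obstacle is really just the citation of this structural fact; everything else is a routine manipulation of the two short exact sequences coming from $p_1$ and $p_2$.
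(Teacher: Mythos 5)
Your proof is correct and follows essentially the same route as the paper: project one fiber subgroup into the other base group, invoke the fact (Property $(\mathscr{D}6)$ of \cite{FEA2}, i.e.\ Greenberg's theorem) that a nontrivial finitely generated normal subgroup of a closed surface group of genus at least $2$ has finite index, identify $(\pi_1 F_1)(\pi_1 F_2)$ with the preimage of that image, and rule out the trivial-image case using distinctness of the fiber subgroups. The only cosmetic differences are that the paper projects $\pi_1 F_1$ into $\pi_1 B_2$ rather than $\pi_1 F_2$ into $\pi_1 B_1$ and handles the degenerate case via finite index inside the fiber group plus torsion-freeness of the base; also, your parenthetical ``trivial center'' sketch of the structural fact really only covers cyclic normal subgroups, so it is best left as a citation, exactly as the paper does.
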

\begin{proof}
Consider the cross-projection $\pi_1 F_1 \to \pi_1 B_2$. Let the image of $\pi_1 F_1$ in $\pi_1 B_2$ be denoted $\Gamma$. This is a finitely-generated normal subgroup of $\pi_1 B_2$. For any surface group of genus $g \ge 2$, any nontrivial finitely-generated normal subgroup has finite index (see Property $(\mathscr{D}6)$ in \cite{FEA2}). If $\Gamma$ is the trivial group, then $\pi_1 F_1 \le \pi_1 F_2$, necessarily again of finite index. In this case, the image of $\pi_1 F_2$ in $\pi_1 B_1$ is therefore finite, but $\pi_1 B_1$ is torsion-free. We conclude that $\Gamma \le \pi_1 B_2$ has finite index. The kernel of the map $\pi_1 E \to (\pi_1 B_2/\Gamma)$ is exactly $(\pi_1 F_1)( \pi_1 F_2)$.
\end{proof}

\begin{theorem}\label{theorem:single} 
Let $p: E \to B$ be a surface bundle over a surface $B$ of genus $g \ge 2$ with monodromy representation $\rho: \pi_1 B \to \Mod(\Sigma_g)$. Suppose that the space of invariant cohomology $(H^1(F,\Q))^\rho$ (equivalently, the coinvariant homology of the fiber $(H_1(F,\Q))_\rho$) vanishes. Then $E$ admits a unique fibering.
\end{theorem}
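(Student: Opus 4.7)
The plan is to use the vanishing hypothesis to pin down $H^1(E,\Q)$ as the pullback from $H^1(B,\Q)$, and then apply Lemma \ref{lemma:direct} to forbid a second fibering. Concretely, I would first invoke Morita's collapse of the Serre spectral sequence (Proposition \ref{proposition:morita}), which yields
\begin{align*}
H^1(E,\Q) \cong H^1(B,\Q) \oplus H^0(B;H^1(F,\Q)).
\end{align*}
The second summand is the invariant subspace $(H^1(F,\Q))^{\pi_1 B} = (H^1(F,\Q))^\rho$, which vanishes by hypothesis (equivalently, its $\Q$-dual $(H_1(F,\Q))_\rho$ does). Hence $p^* \colon H^1(B,\Q) \to H^1(E,\Q)$ is an isomorphism.

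Now I would suppose for contradiction that $p_2 \colon E \to B_2$ is a second fibering, distinct from $p$. Lemma \ref{lemma:direct} supplies the canonical injection $p^* \times p_2^* \colon H^1(B,\Q) \oplus H^1(B_2,\Q) \hookrightarrow H^1(E,\Q)$, so the image $p_2^*(H^1(B_2,\Q))$ intersects the whole space $p^*(H^1(B,\Q)) = H^1(E,\Q)$ only trivially, forcing $p_2^*(H^1(B_2,\Q)) = 0$. Proposition \ref{proposition:gysin} then says $p_2^*$ is injective whenever $\chi(F_2) \ne 0$, in which case $H^1(B_2,\Q) = 0$ and so $B_2 \cong S^2$.

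The remaining task is to exclude the degenerate possibilities by an Euler-characteristic check. Multiplicativity $\chi(E) = \chi(F)\chi(B)$ together with the assumption that $B$ and $F$ both have genus $\ge 2$ gives $\chi(E) > 0$. This rules out $\chi(F_2) = 0$ (so the Gysin injectivity step applies as above), and combined with $B_2 \cong S^2$ it forces $\chi(F_2) = \chi(E)/2 > 0$, hence $F_2 \cong S^2$; but then $E$ is an oriented $S^2$-bundle over $S^2$, so $\pi_1 E$ is trivial, contradicting the surjection $\pi_1 E \twoheadrightarrow \pi_1 B$ onto a surface group of positive genus. The cohomological heart of the argument is a one-line consequence of Morita plus Lemma \ref{lemma:direct}; the main obstacle, to the extent that there is one, is cleanly tracking the degenerate fiberings.
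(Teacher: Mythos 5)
Your proposal is correct and follows essentially the same route as the paper: Morita's collapse (Proposition \ref{proposition:morita}) identifies $H^1(E,\Q)$ with $p^*H^1(B,\Q)$ once $(H^1(F,\Q))^\rho = 0$, and Lemma \ref{lemma:direct} then makes a second distinct fibering impossible, which is exactly the paper's two-step argument. The only divergence is your extra bookkeeping for degenerate targets $B_2$ or $F_2$ of genus $\le 1$, which the paper's standing convention (all fiberings considered have base and fiber of genus $\ge 2$, as is also needed in the proof of Lemma \ref{lemma:findex} underlying Lemma \ref{lemma:direct}) makes unnecessary; if you do wish to treat those cases, note that invoking Lemma \ref{lemma:direct} for a genus-one base exceeds the scope of its proof, though that case is excluded more simply by $\chi(E) = \chi(F)\chi(B) \ne 0$.
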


\begin{proof}
For any surface bundle $p: E \to B$ with monodromy $\rho$ and any choice of coefficients, there is a (noncanonical) splitting
\[
H^1 E = p^*(H^1 B) \oplus (H^1 F)^\rho.
\]
(see Proposition \ref{proposition:morita}). If $(H^1(F,\Q))^\rho = 0$, then this reduces to
\[
H^1(E, \Q) = p^* H^1(B,\Q).
\]
If $p_2:E \to B_2$ is a second, distinct fibering, the above shows that $p_2^*(H^1(B_2, \Q)) \le  p^* H^1(B,\Q)$. However, this contradicts Lemma \ref{lemma:direct}. \end{proof}

\section{Bi-projections} \label{section:biproj}

In this section we state and prove the key lemma from differential topology needed for the proof of Theorem \ref{theorem:double}. 
\begin{proposition}\label{proposition:degree}
Let $E$ be a 4-manifold with surface bundle structures $p_1: E \to B_1$ and $p_2: E \to B_2$. Let $F_1, F_2$ denote fibers of $p_1, p_2$ lying over a regular value of $p_1 \times p_2$. If $\deg (p_1 \times p_2: E \to B_1 \times B_2) \ne 0$, then the following five quantities are equal:

\begin{packed_enum}
\item $\deg (p_1 \times p_2: E \to B_1 \times B_2)$
\item $\deg (p_1|_{F_2}: F_2 \to B_1)$
\item $\deg (p_2|_{F_1}: F_1 \to B_2)$
\item $I_E(F_1,F_2)$ (the algebraic intersection number)
\item $|F \cap F_2|$ (the cardinality of the intersection).
\end{packed_enum}
As (5) indicates, this quantity is always positive.
\end{proposition}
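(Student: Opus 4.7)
The plan is to fix $F_i = p_i^{-1}(x_i)$ with $(x_1, x_2)$ a regular value of $p_1 \times p_2$, so that $F_1$ and $F_2$ meet transversally and $F_1 \cap F_2 = (p_1 \times p_2)^{-1}(x_1, x_2)$ is a finite set. Each of the five quantities can then be expressed in terms of data at this finite set.

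The equalities $(1) = (2) = (3) = (4)$ I would establish via Poincar\'e duality. The class $\mathrm{PD}[F_i] \in H^2(E)$ equals the pullback $p_i^*[B_i]^\vee$ of the top class of the base, so
\[
\mathrm{PD}[F_1] \cup \mathrm{PD}[F_2] \;=\; (p_1 \times p_2)^*\bigl([B_1]^\vee \times [B_2]^\vee\bigr);
\]
pairing with $[E]$ gives $I_E(F_1, F_2) = \deg(p_1 \times p_2)$, which is $(1) = (4)$. For the restriction one writes
\[
\deg(p_1|_{F_2}) \;=\; \langle (p_1|_{F_2})^*[B_1]^\vee,\, [F_2]\rangle \;=\; \langle i^* \mathrm{PD}[F_1],\, [F_2]\rangle \;=\; \langle \mathrm{PD}[F_1],\, i_*[F_2]\rangle \;=\; I_E(F_1, F_2),
\]
where $i : F_2 \hookrightarrow E$, and symmetric reasoning handles $p_2|_{F_1}$. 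In each case the intermediate step uses the projection formula and the fact that the fibers are regular preimages.

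The remaining equality $(4) = (5)$, together with the positivity assertion, is the main obstacle. For it we need every local intersection sign at a point of $F_1 \cap F_2$ to be $+1$, so that the algebraic count agrees with the cardinality and both are positive. Each sign is $\pm 1$ and is locally constant on the open set $R \subset E$ where $p_1 \times p_2$ is a local diffeomorphism; however $E \setminus R$, the locus where $T_pF_1 \cap T_pF_2 \neq 0$, is generically of codimension one and can separate $E$, so connectivity of $R$ alone does not force the sign to be constant. The strategy would be to exploit the fact that the two bundle structures induce the same orientation on $E$, via the identity $[F_1] \wedge [H_1] = [E] = [F_2] \wedge [H_2]$ for horizontal lifts $H_i$ of $B_i$, together with the hypothesis $\deg(p_1 \times p_2) \neq 0$, to pin the common local sign to $+1$ at every transverse intersection point. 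A local coordinate computation of the Jacobian of $p_1 \times p_2$ in a trivializing chart for $p_1$ reduces the local sign to $\det\bigl((p_2)_*|_{T_pF_1}\bigr)$, so the task is to control this single determinant globally in $F_1$; carrying this out rigorously is the most delicate part of the argument.
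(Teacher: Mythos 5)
Your cohomological argument for the equalities $(1)=(2)=(3)=(4)$ is correct, and it is a genuinely different route from the paper's: the paper never passes to Poincar\'e duality, but instead fixes orientations of $E, B_1, B_2$, orients each fiber by the convention $T_xF_i \oplus H_x \cong T_xE$ with $H_x$ oriented via the base, observes that at a point $x$ of $F_1 \cap F_2$ over a regular value one has $T_xE = T_xF_1 \oplus T_xF_2$ (so one may take $H_x = T_xF_2$, resp.\ $T_xF_1$), and then checks that all four signed counts assign the same local contribution to each such point. Your identities $\mathrm{PD}[F_i] = p_i^*[B_i]^\vee$, $\langle \mathrm{PD}[F_1]\smile \mathrm{PD}[F_2],[E]\rangle = \deg(p_1\times p_2)$, and $\deg(p_1|_{F_2}) = \langle \mathrm{PD}[F_1], i_*[F_2]\rangle$ get the four global equalities faster, at the cost of giving no pointwise information.

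The genuine gap is the remaining assertion $(4)=(5)$ together with positivity, i.e.\ that every local sign at $F_1\cap F_2$ is $+1$. This is precisely the content of the proposition that goes beyond standard homological identities, and it is what gets used later (e.g.\ in Lemma \ref{proposition:d1}, to conclude $d \ge 0$ and hence $d=1$); your proposal does not prove it, but only sketches a strategy and explicitly defers "the most delicate part." Moreover, the strategy as sketched -- propagating the sign of $\det\bigl((p_2)_*|_{T_pF_1}\bigr)$ globally over $F_1$ -- runs into exactly the obstacle you yourself identify (the locus where the two vertical distributions meet nontrivially can separate), and the hypothesis $\deg(p_1\times p_2)\neq 0$ cannot repair this by any soft global continuation argument: in the paper that hypothesis is used only to guarantee that the preimage of the regular value is nonempty and that $d(p_1\times p_2)$ is invertible there. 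The paper's treatment of this step is instead purely local at the intersection points: with the fiber orientations pinned down by the convention above, it verifies at each transverse intersection point that $d(p_1|_{F_2})_x$ and $d(p_2|_{F_1})_x$ are orientation-preserving and that the intersection sign of $F_1$ and $F_2$ at $x$ is $+1$, so each point contributes $+1$ to each of $(1)$--$(4)$. To complete your proof you would need to carry out this orientation verification (or some substitute for it) at the level of individual intersection points; as written, your proposal establishes only $(1)=(2)=(3)=(4)$.
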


\begin{proof}
As $p_1$ and $p_2$ are projection maps for fiber bundle structures on $E$, they are everywhere regular, and $\ker (dp_1)_x$ is identified with the tangent space to the fiber of $p_1$ through $x$. Let $z = (b_1, b_2) \in B_1 \times B_2$ be a regular value for $p_1 \times p_2$. It follows from the assumption that $\deg (p_1 \times p_2: E \to B_1 \times B_2) \ne 0$ that $d (p_1 \times p_2)_x$ is an isomorphism for all $x \in (p_1 \times p_2)^{-1}(z)$ (and that this preimage is non-empty). The kernel of $d (p_1 \times p_2)_x$ is just the intersection of the kernels of $d(p_1)_x$ and $d(p_2)_x$. It follows that for all $x \in (p_1 \times p_2)^{-1}(z)$,
\begin{equation}\label{equation:decomp}
T_x E \approx T_x F_1 \oplus T_x F_2.
\end{equation}
Note that this shows that the fibers $F_1,F_2$ over $b_1,b_2$ respectively are transverse. \\
\indent Choose orientations for $E, B_1, B_2$. This specifies an orientation on each fiber of $p_1$ and $p_2$ via the following decomposition, where $H_x$ is any complement to $T_x F_1 = \ker d(p_1)_x$:
\[
T_x F_1 \oplus H_x \approx T_x E.
\]
The orientation on $H_x$ is specified by the isomorphism $H_x \approx T_{p_1(x)}B_1$. Of course an analogous convention orients each fiber of $p_2$. In particular, it follows from (\ref{equation:decomp}) that at any regular point for $p_1\times p_2$, we can take $H_x = T_x F_2$, and that the restriction of $d(p_1)_x$ to $T_x F_2$ is an isomorphism.\\

Recall that if $f: X^n\to Y^n$ is a smooth map of oriented closed $n$-manifolds, then 
\[
\deg(f) = \sum_{x \in f^{-1}(y)} \epsilon(x),
\]
where $y$ is any regular value of $f$, and $\epsilon(x) = 1$ if the orientation on $T_y Y$ induced by $df_x$ agrees with the pre-chosen orientation on $Y$, and $\epsilon(x) = -1$ otherwise. If $Y, Z$ are smoothly embedded and transversely intersecting oriented submanifolds of the oriented manifold $X$ such that $\dim(X) = \dim(Y) + \dim(Z)$, then the algebraic intersection number of $Y$ and $Z$ is computed as
\[
I_X(Y,Z) = \sum_{w \in Y \cap Z} \epsilon(w),
\]
where $\epsilon(w) = 1$ if the orientation on $T_w X$ given by $T_w Y \oplus T_w Z$ agrees with the pre-chosen orientation on $X$, and $\epsilon(w) = -1$ otherwise.

It follows from the definitions that
\[
(p_1 \times p_2)^{-1}(b_1,b_2) = p_1|_{F_2}^{-1}(b_1) = {p_2}|_{F_1}^{-1}(b_2) =  F_1 \cap F_2.
\]
Therefore each of the sums computing $(\ref{proposition:degree}.1) - (\ref{proposition:degree}.5)$ take place over the same set of points. So it remains only to show that in each of the contexts $(\ref{proposition:degree}.1)-(\ref{proposition:degree}.4)$, the relevant orientation convention assigns a positive value.
 
The orientation number assigned to $x \in (p_1 \times p_2)^{-1}(b_1,b_2)$ is given by the sign of the determinant of the map
\[
d(p_1 \times p_2)_x: T_x E \to T_{b_1} B_1 \oplus T_{b_2}B_2.
\]
By the above discussion, our orientation convention stipulates that 
\[
d ({p_1|_{F_2}})_x: T_x F_2 \to T_{b_1} B_1
\]
is an orientation-preserving isomorphism, and similarly for $d(p_2|_{F_1})$. This proves the equality of $(\ref{proposition:degree}.2)$ and $(\ref{proposition:degree}.3)$ with $(\ref{proposition:degree}.5)$.\\
\indent As
\[
T_x F_1 = \ker d (p_1)_x \qquad \text{ and } \qquad T_x F_2 = \ker d (p_2)_x
\]
it follows that $d(p_1 \times p_2)_x$ has a block-diagonal decomposition
\[
d(p_1 \times p_2)_x = d (p_1)_x \oplus d (p_2)_x : T_x F_1 \oplus T_x F_2 \to T_{b_2} B_2 \oplus T_{b_1} B_1,
\]
from which it follows that $x$ also carries a positive orientation number in setting $(\ref{proposition:degree}.1)$. Finally, the orientation number for $x$ as a point of intersection between $F_1$ and $F_2$ records whether the orientations of $T_x E$ and $T_x F_1 \oplus T_x F_2$ agree, but we have already seen that they necessarily do.
\end{proof}


\section{Cup products and the Johnson homomorphism}\label{section:johnson}

\indent The goal of this section is to give a construction of embedded submanifolds in a surface bundle over a surface $E$ that will be explicit enough to compute the intersection form on homology, or dually the cup product structure in cohomology. One of the original definitions of the Johnson homomorphism was via the cup product structure in surface bundles over $S^1$. In this section we turn this perspective on its head and explain how the Johnson homomorphism computes the cup product structure in a surface bundle over a surface (in fact, these methods extend to surface bundles over arbitrary manifolds). The submanifolds we construct will be codimension-$1$ (i.e. $3$-manifolds), and built so that their intersection theory is explicitly connected to the Johnson homomorphism.

To this end, in Section \ref{subsection:intjohnson} we give a discussion of the definition of the Johnson homomorphism in the setting of the cup product in surface bundles over $S^1$. The centerpiece of this is the construction of geometric representatives for classes in $H^1$, via embedded surfaces which we call ``tube-and-cap surfaces''. Then in Section \ref{subsection:construction}, we return to the original problem of constructing representatives for classes in $H^1$ of a surface bundle over a surface as embedded $3$-manifolds. The construction is carried out so that the intersection of particular pairs of these $3$-manifolds is a tube-and-cap surface, thereby realizing the link between cup products in surface bundles over surfaces and the Johnson homomorphism. 

\subsection{From the intersection form to the Johnson homomorphism, and back again}\label{subsection:intjohnson} 
In this subsection we will begin to dive into the theory of the Torelli group in earnest, so we begin with a brief review of the relevant defintions. The {\it Torelli group} $\mathcal I_g$ is the kernel of the symplectic representation $\Psi: \Mod(\Sigma_g) \to \Sp_{2g}(\Z)$. The {\it Johnson kernel} $\mathcal K_g$ is the subgroup of $\mathcal I_g$ generated by all Dehn twists $T_\gamma$ about {\it separating} curves $\gamma$. It is a deep theorem of D. Johnson that $\mathcal K_g$ can alternately be characterized as the kernel of the Johnson homomorphism $\tau$ to be defined below.\\

Let $\phi \in \mathcal{I}_g$ be a Torelli mapping class, and build the mapping torus $M_\phi = \Sigma_g \times I / \{(x,1) \sim (\phi(x),0)\}$. As $\phi \in \mathcal{I}_g$ for any curve $\gamma \subset \Sigma_g$, the homology class $[\gamma] - \phi_*[\gamma]$ is zero. Thus there exists a map of a surface $i: S \to \Sigma_g$ which cobounds $\gamma \cup \phi(\gamma)$. Indeed, there exists an {\em embedded} surface $S \le \Sigma_g \times I$ whose boundary is given by
\[
\partial S = \gamma \times \{1\} \cup \phi(\gamma) \times \{0\}.
\]
To see this, recall that since $S^1$ is a $K(\Z,1)$, there is a correspondence 
\[
H^1(\Sigma_g, \Z) \approx [\Sigma_g, S^1].
\]
Via Poincar\'e duality, 
\[
H^1(\Sigma_g, \Z) \approx H_1(\Sigma_g, \Z).
\]
The induced correspondence
\[
H_1(\Sigma_g, \Z) \approx [\Sigma_g, S^1]
\]
is realized by taking the preimage of a regular value, which will be an embedded submanifold. Under this correspondence, homotopic maps $f,g: \Sigma_g \to S^1$ yield homologous submanifolds, and conversely. Therefore, the maps $f,g: \Sigma_g \to S^1$ which determine $\gamma, \phi(\gamma)$ are homotopic. This gives the desired map $F: \Sigma_g \times I \to S^1$ such that the preimage of a regular value is an embedded surface $S$ cobounding $\gamma$ and $\phi(\gamma)$. 

In fact, the choice of $S$ is not unique. Let $i': S' \to M_\phi$ be any map of a {\em closed} surface to $M_\phi$. Then the chain $S + S'$ satisfies $\partial (S + S') = \partial S = \gamma - \phi(\gamma)$. Nonetheless, given any $S$ satisfying $\partial(S) = \gamma - \phi(\gamma)$, we can form a closed submanifold of $M_\phi$ in the following way. We begin with a tube, diffeomorphic to $S^1 \times I$, embedded into $M_\phi$ as $\phi(\gamma) \times [0,1/3] \cup \gamma \times [2/3,1]$. We may then glue in $S$ to $\Sigma_g \times [1/3, 2/3]$. The result is a smoothly-embedded oriented submanifold $\Sigma_\gamma \subset M_\phi$, which will descend to a homology class $\Sigma_z$ (here $z = [\gamma]$). See Figure \ref{figure:tube}.
\begin{center}
\begin{figure}
\includegraphics{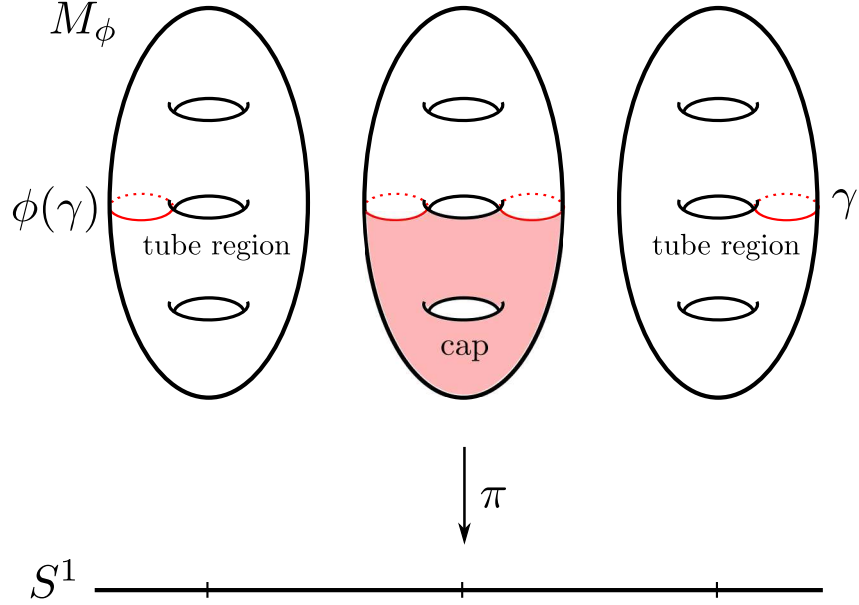}
\caption{A tube surface}
\label{figure:tube}
\end{figure}
\end{center}

For convenience, we introduce the following terminology for these surfaces, which we will refer to as {\em tube surfaces}. The {\em tube} of a tube surface is the cylinder $S^1 \times I = \phi(\gamma) \times [0,1/3] \cup \gamma \times [2/3,1]$, and the {\em cap} is the subsurface $S$.

We assign an orientation to $\Sigma_{\gamma}$ as follows. The tangent space to a point $x$ contained in the tube has a direct sum decomposition via
\begin{equation}\label{eqn:star}
T_x \Sigma_\gamma = V \oplus T_x\gamma,
\end{equation}
where $V$ is any preimage of $T_{\pi(x)}S^1$ and $T_x \gamma$ is interpreted as the tangent space to the copy of $\gamma$ sitting in the fiber containing $x$. Both of the summands in (\ref{eqn:star}) have orientations induced from those on $S^1$ and $\gamma$ respectively, and this endows $T_x\Sigma$ with an orientation. This can then be extended over the cap surface in a coherent way, since $S$ was chosen to be a boundary for $[\gamma] - [\phi(\gamma)]$ with $\Z$ coefficients. 

Recall however that the choice of $S$ was not unique. Any closed surface mapping into $\Sigma_g$ is homologous to some multiple of the fundamental class, and so the above procedure really defines a homomorphism $H_1 \Sigma_g \to H_2 M_\phi / [F]$, where $[F]$ is the fundamental class of the fiber. If the bundle has a section $\sigma: S^1 \to M_\phi$, then we can choose $S$ so that $\im \sigma$ and $\Sigma_z$ have zero algebraic intersection, which gives a canonical lift $H_1 \Sigma_g \to H_2 M_\phi$. In the absence of such auxiliary data, we instead just choose an {\em arbitrary} lift, and we will account for the consequences later.\\
\indent Having chosen an embedding $i: H_1 \Sigma_g \into H_2 M_\phi$ such that $z \mapsto \Sigma_z$, there is an associated direct sum decomposition of $H_2 M_\phi$, namely
\[
H_2 M_\phi = \pair{[F]} \oplus \im i.
\]
Relative to such an embedding, we form the map $\tau(\phi) \in \Hom(\wedge^3 H_1 \Sigma_g, \Z)$ by
\[
\tau(\phi)(x \wedge y \wedge z) = \Sigma_x \cdot \Sigma_y \cdot \Sigma_z,
\]
the term on the right being interpreted as the triple algebraic intersection of the given homology classes. Suppose a section exists, and that the $\Sigma_x$ have been constructed accordingly. In this case, D. Johnson showed that the map 
\begin{align*}
\tau: \mathcal{I}_{g,*} &\to \Hom(\wedge^3 H_1 \Sigma_g, \Z)\\
\phi &\mapsto \tau(\phi)
\end{align*}
is a surjective homomorphism. See \cite[Chapter 6]{FM} for a summary of the Johnson homomorphism, including two alternative definitions. The (pointed) {\em Johnson kernel} $\K_{g,*}$ is defined to be the kernel of $\tau$; in our context this exactly means that all triple intersections between the various $\Sigma_x$ vanish.

Having fixed a family of $\Sigma_x$, it is then easy to compute the entire intersection form on $\wedge^3 H_2 M_\phi$. Certainly $[F]^2 = 0$. It is also fairly easy to see that
\[
[F] \cdot \Sigma_x \cdot \Sigma_y = i(x,y),
\]
where $i(x,y)$ denotes the algebraic intersection pairing in $H_1 \Sigma_g$. Indeed, by picking the choice of fiber to intersect $\Sigma_x$ on the tube, it is clear that the result is simply the curve $x$, so that $[F] \cdot \Sigma_x \cdot \Sigma_y$ computes the intersection of $x,y$ on $F$, at least up to a sign that may be introduced by the (non)compatibilities of the various orientation conventions in play. A quick check reveals this sign 	to be positive.\\
\indent We will now be able to account for the ambiguity introduced by our choice of embedding $i: H_1 \Sigma_g \into H_2 M_\phi$, which will in turn lead to the definition of the Johnson homomorphism on the closed Torelli group $\mathcal I_g$. Suppose that $\Sigma_w' = \Sigma_w + k_w[F]$ is some other set of choices which is {\em coherent} in the sense that $\Sigma'_w + \Sigma'_z = \Sigma'_{w + z}$ (i.e. $x \mapsto k_x \in H^1 \Sigma_g$). By linearity, 
\begin{align*}
\Sigma'_x \cdot \Sigma'_y \cdot \Sigma'_z &= \Sigma_x \cdot \Sigma_y \cdot \Sigma_z + k_x i(y,z) + k_y i(z,x) + k_z i(x,y)\\
&= \tau(\phi)(x \wedge y \wedge z) + k_x i(y,z) + k_y i(z,x) + k_z i(x,y)\\
&= \tau(\phi)(x \wedge y \wedge z) + C^*(k);
\end{align*}
here $C: \wedge^3 H_1 \Sigma_g \to H_1 \Sigma_g$ is the contraction with the symplectic form $i(\cdot, \cdot)$, and $k \in \Hom(H_1 \Sigma_g, \Z)$ is the form such that $k(w) = k_w$. The upshot of this calculation is that $\tau(\phi)$ is well-defined as an element of $\Hom(\wedge^3 H_1 \Sigma_g, \Z) / \im C^*$, which can be identified with the more familiar space $\wedge^3 H / H$ (here we adopt the usual convention that $H = H_1 \Sigma_g$). The  {\em Johnson homomorphism} on the closed Torelli group is then defined via
\begin{align*}
\tau: \mathcal{I}_g &\to \Hom(\wedge^3 H_1 \Sigma_g, \Z) / \im C^* \approx \wedge^3 H / H\\
\phi &\mapsto \tau(\phi).
\end{align*}
As before, the (closed) {\em Johnson kernel} $\K_g$ is the kernel of $\tau$. As mentioned above, work of D. Johnson shows that there is an alternative characterization via
\[
\K_g = \pair{T_\gamma \mid \gamma \mbox{ separating scc}}.
\] 

\begin{remark} The construction given above with the tube-cap surfaces is a concrete realization of the isomorphism $H_1 \Sigma_g \approx H_2 M_\phi / [F]$ coming from the Serre spectral sequence for $p: M_\phi \to S^1$. In fact, this same construction will work for an arbitrary $\phi \in \Mod_g$, yielding an isomorphism $(H_1 \Sigma_g)^\phi \approx H_2 M_\phi / [F]$, but we do not pursue this here.
\end{remark}

\indent The above discussion shows how to construct the Johnson homomorphism in terms of the intersection form on $M_\phi$. Conversely, we will show next how to reconstruct the intersection form on $M_\phi$ from the data of the Johnson homomorphism $\tau(\phi) \in \wedge^3 H / H \approx \Hom(\wedge^3 H \Sigma_g, \Z) / \im C^* $. Begin by selecting an {\em arbitrary} lift $\tilde \tau (\phi)$ of $\tau(\phi)$ (of course, the presence of a section gives a canonical such choice). Next, construct a coherent family of homology classes $\Sigma'_x$ by making choices arbitrarily. Define $\tau'(\phi) \in \Hom(\wedge^3 H, \Z)$ by 
\[
\tau'(\phi)(x \wedge y \wedge z) = \Sigma'_x \cdot \Sigma'_y \cdot \Sigma'_z.
\]

\indent There is no reason to suspect that $\tau'(\phi) = \tilde \tau (\phi)$. However, as we saw above, we do know that $\tau'(\phi) - \tilde \tau (\phi) \in \im C^*$, and so there is some functional $\alpha \in H^1 \Sigma_g$ such that $\tau'(\phi) - \tilde \tau (\phi) = C^*(\alpha)$. This functional $\alpha$ will allow us to choose the correct set of $\Sigma_x$ so that the triple intersections are computed by our choice of $\tilde \tau (\phi)$. 

\begin{lemma}\label{proposition:derp}
We assume the notation of the above setting. By taking
\[
\Sigma_x = \Sigma'_x - \alpha(x)[F], 
\]
there is an equality for all $x,y,z$:
\[
\Sigma_x \cdot \Sigma_y \cdot \Sigma_z = \tilde \tau (\phi)(x \wedge y \wedge z).
\]
\end{lemma}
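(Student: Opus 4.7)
The plan is to compute $\Sigma_x \cdot \Sigma_y \cdot \Sigma_z$ directly, exploiting trilinearity of the triple intersection pairing together with the two facts already established in the subsection: $[F]^2 = 0$ in $H_*(M_\phi)$, and for any coherent family of lifts $\Sigma'_w$, the equality $[F] \cdot \Sigma'_y \cdot \Sigma'_z = i(y,z)$ holds. First I would observe that the new family $\Sigma_x = \Sigma'_x - \alpha(x)[F]$ is again coherent, since $\alpha \in \Hom(H_1 \Sigma_g, \Z)$ is linear; this is a sanity check that the formula is well-defined as a modification of the chosen lift $i: H_1\Sigma_g \hookrightarrow H_2 M_\phi$.

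Next I would expand $\Sigma_x \cdot \Sigma_y \cdot \Sigma_z$ trilinearly. All terms containing two or more factors of $[F]$ vanish because $[F]^2 = 0$, so only four terms survive: the ``main'' term $\Sigma'_x \cdot \Sigma'_y \cdot \Sigma'_z = \tau'(\phi)(x\wedge y \wedge z)$, and three ``correction'' terms each linear in $[F]$. Those three are computed by invoking $[F]\cdot \Sigma'_a \cdot \Sigma'_b = i(a,b)$ after permuting the factors; since the triple intersection pairing on the $3$-manifold $M_\phi$ is antisymmetric in its three $H_2$-arguments (this is just graded-commutativity of cup product on $H^1(M_\phi)$ under Poincar\'e duality), the corrections assemble into
\[
-\alpha(x)\, i(y,z) - \alpha(y)\, i(z,x) - \alpha(z)\, i(x,y).
\]

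Finally I would recognize this last expression as $-C^*(\alpha)(x\wedge y\wedge z)$, using the formula for the contraction $C$ already recorded in the paragraph that defined $\tau'(\phi) - \tilde\tau(\phi) = C^*(\alpha)$. Substituting $\tau'(\phi) = \tilde\tau(\phi) + C^*(\alpha)$ in the main term, the two occurrences of $C^*(\alpha)$ cancel and leave exactly $\tilde\tau(\phi)(x\wedge y \wedge z)$, as desired.

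The only genuine subtlety is sign bookkeeping in the correction terms, where one has to commute $[F]$ past one or two of the $\Sigma'_w$ factors and keep track of the antisymmetry of the triple intersection form on a $3$-manifold; everything else is formal manipulation using the already-established facts $[F]^2=0$, $[F]\cdot\Sigma'_a\cdot\Sigma'_b = i(a,b)$, and the definitional identity $\tau'(\phi) - \tilde\tau(\phi) = C^*(\alpha)$.
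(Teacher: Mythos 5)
Your proposal is correct and follows essentially the same route as the paper's proof: expand trilinearly, kill the higher-order terms with $[F]^2=0$, evaluate the linear correction terms via $[F]\cdot\Sigma'_a\cdot\Sigma'_b = i(a,b)$, and cancel $C^*(\alpha)$ against $\tau'(\phi)-\tilde\tau(\phi)$. Your extra care with the antisymmetry of the triple product is sound and matches the sign conventions the paper uses implicitly in its earlier computation of the effect of changing the lift.
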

\begin{proof}
Compute:
\begin{align*}
\Sigma_x \cdot \Sigma_y \cdot \Sigma_z &= \Sigma'_x \cdot \Sigma'_y \cdot \Sigma'_z - \alpha(x) i(y,z) - \alpha(y) i(z,x) - \alpha(z) i(x,y)\\
&= \tau'(\phi)(x \wedge y \wedge z) - C^*(\alpha)(x \wedge y \wedge z)\\
&= \tilde \tau (\phi).
\end{align*}
\end{proof}

\subsection{Intersections in surface bundles over surfaces, and beyond}\label{subsection:construction} The methods of the previous subsection can be adapted to give a description of certain cup products in $H^1 E$, where $p: E^{n+2} \to B^n$ has monodromy lying in $\mathcal I_g$. The idea will be to define an embedding, as before,
\[
i: H_1 \Sigma_g \into H_{n+1} E,
\]
by constructing submanifolds $M_\gamma$ for curves $\gamma \subset \Sigma_g$ by means of a higher-dimensional ``tubing construction''. Then the triple intersections of collections of $M_x$ will be partially computable via the Johnson homomorphism in a certain sense to be described below. In this subsection we will first briefly sketch the properties we require of the submanifolds $M_\gamma$, then we will give the construction. Then in Section \ref{subsection:intform}, we will determine much of the intersection pairing in $H_*(E, \Z)$.\\
\indent Our construction will provide, for each simple closed curve $\gamma \subset F$, a submanifold $M_\gamma$, such that if $[\gamma] = [\gamma']$, then also $[M_\gamma] = [M_{\gamma'}]$. If $[\gamma] = x$, we write $M_x$ in place of $[M_\gamma]$. Let $p: E \to B$ be a surface bundle with monodromy in $\mathcal I_g$, and let $\rho: \pi_1 B \to \mathcal{I}_g$ be the monodromy. By post-composing with $\tau: \mathcal I_g \to \wedge^3 H / H$, we obtain a map from $\pi_1 B$ to an abelian group, and so $\tau \circ \rho$ factors through $H_1 B$. By an abuse of notation we will write $\tau(b)$ for $b \in H_1 B$. \\
\indent This map computes (most of) the intersection form in $H_*(E)$. Recall the notation from Proposition \ref{proposition:morita}: given a curve $\alpha \subset B$, there is an induced bundle $E_\alpha$ over $\alpha$, which determines a homology class $E_a$. A given $M_\gamma$ can be intersected with $E_\alpha$ to yield a surface $\Sigma_{\alpha, \gamma}$ inside $E_\alpha$. Our construction will be set up so that
\[
M_x \cdot M_y \cdot M_z \cdot E_b = \tau(b)(x \wedge y \wedge z),
\]
possibly up to a sign. This is the sense in which $M_x \cdot M_y \cdot M_z$ is partially computable. As a remark, the intersections $M_x \cdot M_y \cdot M_z \cdot X$ for arbitrary $X \in H_3 E$ will all involve intersections with further $M_w$, and are describable (at least in the case of bundles with section) in terms of the higher Johnson invariants
\[
\tau: H_i(\mathcal I_{g,*}) \to \wedge^{i+2} H,
\]
but we will not pursue this point of view further in this paper.\\

\para{The construction} As usual, let $\pi: E \to B$ be a surface bundle with monodromy lying in $\mathcal I_g$ and associated Johnson map $\tau: H_1 B \to \wedge^3 H / H$. We turn now to the question of constructing suitable homology classes $M_x \in H_{n+1} E$, for $x \in H_1 \Sigma_g$. As the only case of interest in the present paper is where $B$ is a surface, we will content ourselves with describing the case when $M_\gamma$ is a 3-manifold. The reader may find it helpful to consult Figure \ref{figure:m} as they read this subsection.\\
\indent Consider a cell decomposition
\[
B = B^0 \subset B^1 \subset B^2
\]
of $B$, where $B^0$ consists of the single point $p$, there are $2g$ one-cells $\{a_1, b_1, \dots, a_h, b_h\}$, and a single two-cell $D$. For each one-cell $e$, there is an associated element of the monodromy, $\rho(e)$, such that the effect of transporting a curve $\gamma$ across $e$ (from the negative to the positive side, relative to orientations of $B$ and $e$) sends the isotopy class of $\gamma$ to $\rho(e) \gamma$. For a one-cell $e$, let $N(e) \approx e \times I$ be a (closed) regular neighborhood in $B$. We also let $N(p)$ be a small closed neighborhood of $p$. If necessary, shrink the $N(e)$ so that
\[
N:= N(a_1) \cup \dots \cup N(b_h) \setminus N(p)
\]
is a union of $2h$ disjoint rectangles.\\
\indent Let $\gamma \subset F$ be a simple closed curve on a fiber $F$ over a point in 
\[
D' := \overline{D \setminus \left(N(p) \cup N(a_1) \cup \dots \cup N(b_h) \right)}.
\]
By construction, $D'$ is nothing more than a closed disk (in the upper-left portion of Figure \ref{figure:m}, $D'$ is the closure of the complement of the shaded regions). The submanifold $M_\gamma$ will be constructed in three stages: first over $D'$, then over $N$, and finally over $N(p)$. Choose a trivialization $\pi^{-1}(D') \approx D' \times F$, and take $M_\gamma^1 = \gamma \times D'$ relative to this trivialization. Then $\partial(M_\gamma^1) \subset \pi^{-1}(\partial D')$. We specify an orientation on $M_\gamma^1$ as follows: a point $x \in M_\gamma^1$ has a decomposition of the tangent space via
\begin{equation}\label{eqn:starstar}
T_x M_\gamma^1 \approx T_{\pi(x)} B \oplus T_x \gamma.
\end{equation}
Both of these two summands carry pre-existing orientations, and $M_\gamma^1$ is then oriented by specifying the above isomorphism to be orientation-preserving. By analogy with the construction of tube surfaces, we refer to $M_\gamma^1$ as the {\em tube region} of $M_\gamma$.

Next we construct $M_\gamma^2$. Let $e$ be a one-cell, and consider the intersection $M_\gamma^1 \cap \pi^{-1} (N(e) \cap N)$. The base space $N(e) \cap N$ is just a rectangle, and so the bundle $\pi^{-1}(N(e) \cap N)$ is trivializable. We can therefore find a diffeomorphism
\[
\psi: \pi^{-1}(N(e) \cap N) \approx I \times I \times \Sigma_g
\]
under which $M_\gamma^1 \cap \pi^{-1} (N(e) \cap N)$ is identified with 
\[
\left(I \times \{0\} \times \gamma \right) \cup \left(I \times \{1\} \times \gamma' \right),
\]
where $\gamma'$ is some curve in the isotopy class of $\rho(e)(\gamma)$. As we saw in the previous subsection, for each $e$ there exists a family of embedded surfaces $S_e$ in $I \times \Sigma_g$ such that $\partial S_e = \{0\} \times \gamma \cup \{1\} \times \gamma'$. As before, make an arbitrary choice of $S_e$ for each $e$. We can then fill in $\pi^{-1} (N(e) \cap N)$ with $I \times S_e$ for each $e$, creating $M_\gamma^2$. As in the case of a tube surface, the orientation for $M_\gamma^1$ can be extended over each of these pieces coherently. We refer to $M_\gamma^2 \setminus M_\gamma^1$ as the {\em cap region} of $M_\gamma$.

It therefore remains to construct $M_\gamma^3 = M_\gamma$. The boundary of $M_\gamma^2$ lies on $\pi^{-1}(\partial N(p))$. We would like to be able to fill this boundary in by inserting a ``plug'' contained in $\pi^{-1}(N(p))$. {\em A priori}, there is a homological obstruction to this: if $[\partial M_\gamma^2] \ne 0$ in $H_2( \pi^{-1}(N(p)))$ then this problem is not solvable even on the chain level. However, this obstruction vanishes. To see this, observe that $\pi^{-1}(N(p))$ deformation-retracts onto $F = \pi^{-1}(p)$ (the bundle over $N(p)$ being trivial). In particular, a surface $S \subset \pi^{-1}(N(p))$ represents zero in $H_2(\pi^{-1}(N(p)))$ if and only if the degree of the map $S \to F$ coming from the retraction has degree zero.

The surface $\partial M_\gamma^2$ maps with degree zero onto $F$. To see this, we will examine our construction more closely. If $B$ is a surface of genus $h$, the one-cells of our construction can be labeled $a_1,b_1, \dots, a_h,b_h$. The picture near $p$ is therefore of $4g$ edge segments, one ingoing and one outgoing for each one-cell, ordered (counter-clockwise, say) as $a_1, b_1, a_1^{-1}, b_1^{-1}, \dots, a_h^{-1}, b_h^{-1}$ (where $e$ denotes an outgoing edge and $e^{-1}$ denotes an ingoing edge).

The surface $\partial M_\gamma^2$ is constructed as a union of $4g$ cap surfaces, one sitting over each ingoing and outgoing edge segment near $p$ (see the lower-right portion of Figure \ref{figure:m}). Recall that the monodromy of the bundle acts on the fiber as follows: the effect of crossing positively over a one-cell $e$ is to act by $\rho(e)$. Therefore, the cap surface for a curve $\eta$ sitting over an edge $e^{\pm1}$ connects $\eta$ to $\rho(e)^{\pm 1} \eta$. We denote this surface by $\Sigma_{\rho(e)^{\pm 1}, \eta}$. For example, the cap surface sitting over $a_1$ is $\Sigma_{\rho(a_1), \gamma}$. The cap surface over $a_1^{-1}$ is obtained from this by transporting $\Sigma_{\rho(a_1), \gamma}$ along $a_1$. 

Let $\alpha: I \to B$ be a parameterization of $a_1$, with $\alpha(0) = \alpha(1) = p$. Consider the product $\Sigma_{\rho(a_1), \gamma} \times [\epsilon, 1 - \epsilon]$, embedded in $\pi^{-1}(N(e) \setminus N)$ by transporting $\Sigma_{\rho(a_1), \gamma}$ along $a_1$. Relative to a fixed identification of $\pi^{-1}(N(p))$ with $N(p) \times \Sigma_g$, the endpoint $\Sigma_{\rho(a_1), \gamma} \times \epsilon$ is diffeomorphic to $\Sigma_{\rho(a_1), \gamma}$, while the endpoint $\Sigma_{\rho(a_1), \gamma} \times (1 - \epsilon)$ is diffeomorphic to $\rho(b_1)(\Sigma_{\rho(a_1), \gamma})$ with the orientation reversed. Generally, there are $2g$ distinct cap surfaces, one for each edge $a_i, b_i$, and each appears twice (modulo the action of the monodromy), with opposite orientations. Therefore, when projecting onto $F$, the total degree is zero, and so there exists a 3-chain $M_p$ in $\pi^{-1}(N(p))$  satisfying 
\[
\partial M_p = \partial M_\gamma^2.
\]

\indent The last thing to do is to explain why $M_p$ can be replaced with a smooth 3-manifold. This will follow from general results on representing (relative) codimension-one homology classes by smooth submanifolds (with boundary). The argument proceeds along very similar lines to the construction of embedded cap surfaces in fibered 3-manifolds described above. For an oriented manifold $X$ with boundary, Lefschetz duality gives an isomorphism
\[
H_{n-1}(X,\partial X, \Z) \approx H^1(X, \Z) \approx [X, S^1]
\]
\indent In our setting, the surface $\partial M_\gamma^2$ is represented by a map 
\[
f: \pi^{-1}(\partial N(p)) \to S^1,
\]
such that $\partial M_\gamma^2 = f^{-1}(*)$ for some regular value $* \in S^1$. Similarly, the homology class of $M_p$ in $H_3(\pi^{-1}(N(p)))$ corresponds to a map 
\[
F: \pi^{-1}(N(p)) \to S^1.
\]
Moreover, as $\partial M_p = \partial M_\gamma^2$, they represent the same homology class in $H_2 (\pi^{-1}(\partial N(p)), \Z)$. This means that the maps $f$ and $F\mid_{\pi^{-1}(\partial N(p))}$ are homotopic. We can therefore concatenate this homotopy with $F$, to obtain a map 
\[
\tilde F: \pi^{-1} (N(p)) \to S^1.
\]
On the boundary, $\tilde F = f$, and is therefore transverse to $* \subset S^1$. In order to replace $M_p$ by a smooth submanifold such that $\partial M_p = \partial M_\gamma^2$, we must therefore perturb $\tilde F$ away from a neighborhood of $\pi^{-1}(\partial(N(P)))$ and make the result everywhere transverse to $* \subset S^1$. The Extension Theorem (see \cite[p. 72]{gp}) asserts that we can do precisely this, and by gluing the boundaries of this new submanifold and $M_\gamma^2$, we have succeeded in constructing the closed submanifold $M_\gamma$. We refer to this portion of $M_\gamma$ as the {\em plug}. Lastly we extend the orientation on $M_\gamma^2$ over all of $M_\gamma$. \\
\begin{center}
\begin{figure}
\includegraphics{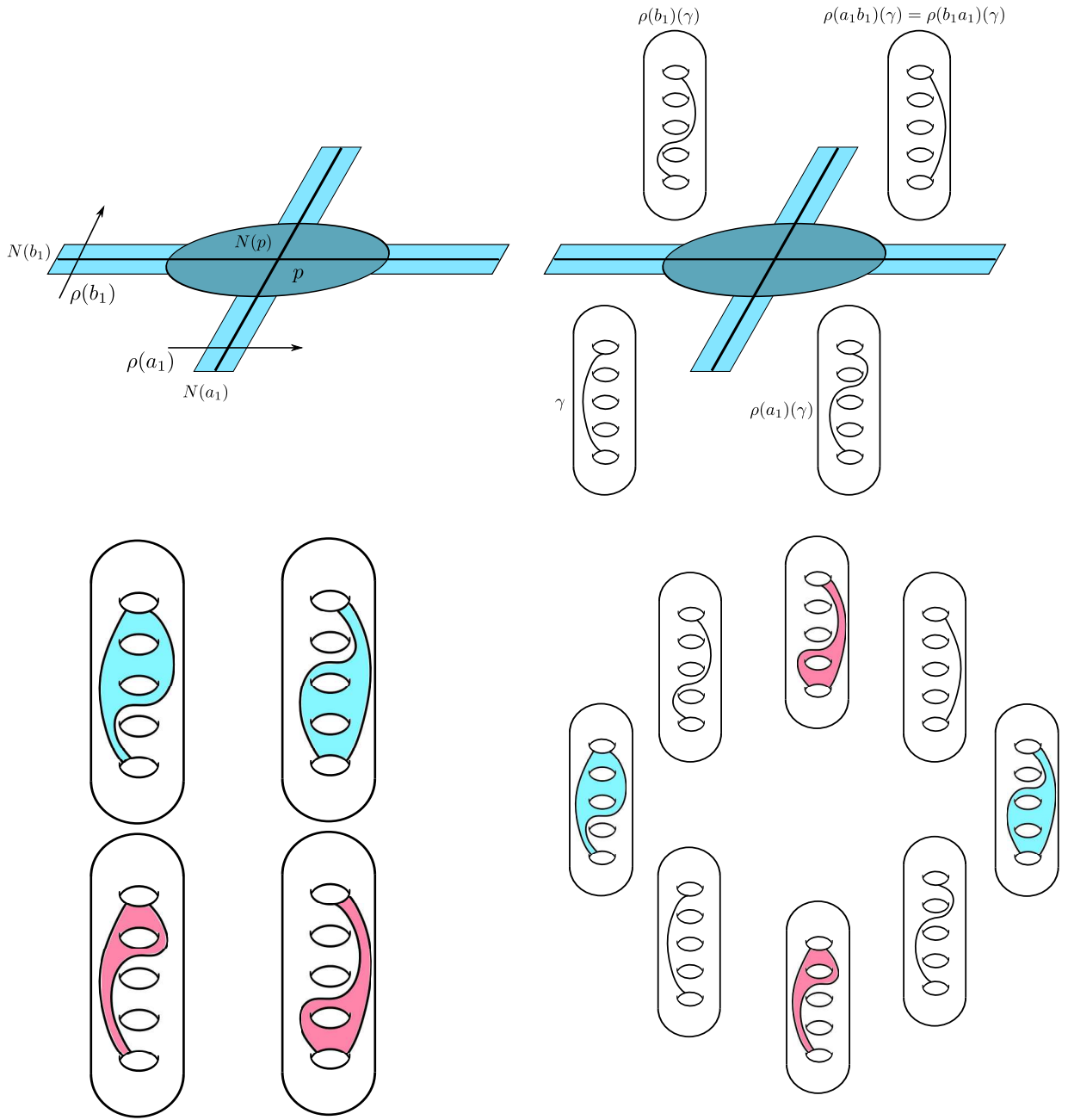}
\caption{Upper left: The neighborhoods $N(e)$ and $N(p)$. Upper right: $M_\gamma^1$ intersected with four different fibers. Lower left: Cap surfaces, lying over different portions of $N$. Lower right: A depiction of $M_\gamma^2 \cap \pi^{-1}(\partial N)$.}
\label{figure:m}
\end{figure}
\end{center}

\indent An essential feature of the above construction is the relationship between an $M_\gamma$ and a sub-bundle $E_\alpha$ lying over a curve $\alpha \subset B$. Suppose $\alpha$ is chosen so that relative to the cell decomposition of $B$ used in constructing $M_\gamma$, $\alpha$ is transverse to all the one-cells $e$, and does not pass through $N(p)$. Then a little visual imagination reveals that the intersection of $M_\gamma$ and $E_\alpha$ is given by a tube surface for $\gamma$ sitting inside $E_\alpha$. We call the resulting surface $\Sigma_{\alpha, \gamma}$, and then $[\Sigma_{\alpha, \gamma}]$ is denoted by $\Sigma_{a,x}$, where $[\alpha] = a$ and $[\gamma] = x$.

We define a {\em family} of $M_x$ to be a set of $M_x$ for each $x \in H_1 F$ such that for all $c \in \Z$ and $x,y \in H_1 F$,
\[
M_{c x + y} = c M_x + M_y.
\]
Different choices of $M_x$ lead to different spaces of $\Sigma_{b,x}$, but conversely, a choice of a family of $M_x$ leads to a corresponding distinguished summand of $H_2 E$.\\

\subsection{Determination of the intersection form}\label{subsection:intform} It remains to give a description of the cup product structure on $H^*(E, \Z)$; equivalently, we will describe the intersection form. By Poincar\'e duality, it suffices to determine, for each $X$, the set of pairings $X \cdot Y$.
\begin{proposition}\label{proposition:cups}
\begin{packed_enum} Let $i_B$ and $i_F$ denote the algebraic intersection pairing on the homology of the base and on the fiber, respectively. 
\item There exists a unique class $C \in H_2 E$ such that $C \cdot \Sigma_{b,z}= 0$ for all $b \in H_1 B $, $z \in H_1 \Sigma_g$, and $C \cdot [F] = 1$. The intersection pairing $H_2 E \otimes H_2 E \to \Z$ is given as follows, where $e = C^2$ by definition.
\[
\begin{array}{|c|c|c|c|} \hline
					&	C	&	[F]	&	\Sigma_{a,z}	\\ \hline
C					&	e	&	1	&	0					\\ \hline
[F]					&	1	&	0	&	0					\\ \hline
\Sigma_{b,w}			&	0	&	0	&	-i_B(a,b)i_F(z,w)		\\ \hline
\end{array}
\]
In the case where the monodromy is contained in the Johnson kernel, we have $e$ = 0.

\item For any family of $M_x$, we have
\begin{align*}
E_a \cdot E_b &= i_B(a,b) [F]\\
M_x \cdot E_b &= \Sigma_{b,x}\\
M_z \cdot M_w \cdot [F] &= i_F(z,w).
\end{align*}

\item For a given lift of $\tau: H_1 B \to \wedge^3 H / H$ to $\tilde \tau: H_1 B \to \wedge^3$, there exists a splitting 
\[
H_3 E = \pi^!(H_1 B) \oplus H_1 M = \{E_b, b \in H_1 B\} \oplus \{M_z, z \in H_1 F\}
\]
relative to which
\begin{align*}
&M_x \cdot M_y \cdot M_z \cdot E_b= M_x \cdot M_y \cdot \Sigma_{b,z} = \tilde\tau(b)(x \wedge y \wedge z).
\end{align*}
In the case where the monodromy is contained in the Johnson kernel, we can take the canonical lift to zero, and for this family of $M_x$ we have 
\begin{align*}
C \cdot M_x &= 0\\
C^2 &= 0
\end{align*}
for all $x \in H_1 \Sigma_g$.

\end{packed_enum}
\end{proposition}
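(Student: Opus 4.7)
The plan is to prove parts (2), (1), (3) in that order, since the basic formulas of (2) feed into the computations in (1) and (3). For (2), each formula is essentially a direct consequence of the $M_\gamma$ construction of subsection \ref{subsection:construction}: I would compute $E_a \cdot E_b$ by taking transverse representatives $\alpha, \beta$ of $a, b$ and noting that $E_\alpha \cap E_\beta = p^{-1}(\alpha \cap \beta)$ is a signed union of fibers; $M_x \cdot E_b = \Sigma_{b,x}$ is built into the construction (provided $\beta$ is transverse to the one-skeleton of the cell decomposition used in building $M_\gamma$ and avoids $N(p)$), as discussed at the end of subsection \ref{subsection:construction}; and $M_z \cdot M_w \cdot [F] = i_F(z,w)$ I would read off from the tube regions of $M_z, M_w$ over a fiber sitting in $p^{-1}(D')$, where the intersection is literally the pair of curves $z, w$ in a single fiber.

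For (1), I would start from the Morita decomposition $H_2(E,\Z) = \Z[F] \oplus (H_1 B \otimes H_1 F) \oplus \Z$. First I would verify the pairing $\Sigma_{a,z} \cdot \Sigma_{b,w} = -i_B(a,b)\,i_F(z,w)$ by a transverse-intersection calculation at the fibers over $\alpha \cap \beta$, tracking the orientation conventions of subsection \ref{subsection:construction} carefully to extract the sign. This shows the middle summand carries a non-degenerate intersection form (isomorphic to $-i_B \otimes i_F$). The entries $[F]^2 = 0$ and $[F] \cdot \Sigma_{b,z} = 0$ follow from disjointness of generic fibers from $E_\beta$. To construct $C$, I would choose a generator $C_0$ of the $H_2 B$ summand normalized so that $C_0 \cdot [F] = 1$ (existence is immediate from Poincar\'e duality plus primitivity of $[F]$), and subtract the unique $\xi$ in the middle summand with $C_0 \cdot \Sigma_{b,z} = \xi \cdot \Sigma_{b,z}$ for all $(b,z)$. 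Uniqueness of the resulting $C$ modulo $\Z[F]$ is immediate from non-degeneracy of the Poincar\'e pairing.

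For (3), the additive splitting $H_3 E = \pi^!(H_1 B) \oplus H_1 M$ comes from Morita's theorem. For the triple-intersection formula I would combine (2) with the mapping-torus analysis of subsection \ref{subsection:intjohnson}: using $M_z \cdot E_b = \Sigma_{b,z}$,
\[
M_x \cdot M_y \cdot M_z \cdot E_b \;=\; \Sigma_{b,x} \cdot \Sigma_{b,y} \cdot \Sigma_{b,z}
\]
computed inside the $3$-manifold $E_b$, which is a mapping torus with monodromy $\rho(b) \in \mathcal I_g$. By the discussion of subsection \ref{subsection:intjohnson}, this triple product equals $\tau'(\rho(b))(x \wedge y \wedge z)$ for some lift $\tau'$ of $\tau$ determined by the choices of caps made in the construction of $M_\gamma$. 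Applying Lemma \ref{proposition:derp} fiberwise over each $E_b$---adding multiples of $[F]$ to each $M_x$, an adjustment that preserves $\Sigma_{b,z}$ since $[F] \cdot E_b = 0$---I can realize any prescribed lift $\tilde\tau$ of $\tau$.

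In the Johnson kernel case $\tau \equiv 0$, so the zero lift is canonical and the corresponding $M_x$ have all quadruple intersections $M_x \cdot M_y \cdot M_z \cdot E_b$ equal to zero. A further shift $M_x \mapsto M_x + k_x [F]$ arranges $C \cdot M_x = 0$ without disturbing the vanishing triple products. The hardest step I anticipate is $e = C^2 = 0$: the replacement $C \mapsto C + a[F]$ only changes $C^2$ by $2a$, so the defining conditions of (1) pin $C^2$ down only modulo $2$, and getting the vanishing on the nose requires an input beyond the intersection-form calculations above. The natural path is to leverage the ring-level splitting $H^*(E,\Z) \cong H^*(B,\Z) \otimes H^*(F,\Z)$ that these triple-product computations are designed to establish, forcing $PD(C) \smile PD(C) = 0$; this is the point at which I expect to need the multisection arguments of Section \ref{section:appendix}.
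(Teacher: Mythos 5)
Your treatment of part (2), of the pairing table in part (1), and of the reduction of the quadruple intersection $M_x \cdot M_y \cdot M_z \cdot E_b$ to a triple product $\Sigma_{b,x}\cdot\Sigma_{b,y}\cdot\Sigma_{b,z}$ inside the mapping torus $E_b$ all match the paper. The genuine gap is exactly where you flag difficulty: the Johnson-kernel claims $C^2 = 0$ and $C \cdot M_x = 0$. Your abstract construction of $C$ (Morita decomposition plus nondegeneracy of the intersection form) determines $C$ only modulo $\Z[F]$ and hence $e = C^2$ only modulo $2$, as you observe; but your proposed remedy --- invoking the ring-level splitting $H^*(E,\Z)\cong H^*(B,\Z)\otimes H^*(F,\Z)$ or the multisection machinery of Section \ref{section:appendix} --- is circular and misdirected. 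Proposition \ref{proposition:cohoproduct} (the ring splitting in the $\K_g$ case) is \emph{deduced from} Proposition \ref{proposition:cups}, and Section \ref{section:appendix} only produces a rational splitting of $H^1$ via transfer maps (which are not ring homomorphisms); it enters later, in the proof of Theorem \ref{theorem:double}, not here. The missing idea in the paper is to construct $C$ \emph{geometrically}, as $C = M_{x_1}\cdot M_{y_1} + \sum_{(b,z)}\tilde\tau(b)(x_1\wedge y_1\wedge z)\,\Sigma_{\hat b \hat z}$, and verify $C\cdot[F]=1$, $C\cdot\Sigma_{b,z}=0$ directly from parts (2) and (3). When the monodromy lies in $\K_g$ this collapses to $C = M_{x_1}\cdot M_{y_1}$, so $C^2 = 0$ is immediate (it is the square of a product of two odd-codimension classes, i.e.\ of two degree-one cohomology classes in torsion-free cohomology), and $C\cdot M_x = 0$ follows from the short computation in the paper: expand $M_z\cdot M_w$ in the additive basis, use $M_z\cdot M_w\cdot\Sigma_{b,x}=0$ (Johnson kernel) to kill the $\Sigma$-coefficients, then combine $(M_z\cdot M_w)^2=0$, $[F]^2=C^2=0$, and $M_z\cdot M_w\cdot[F]=i_F(z,w)$.

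A secondary but real error: in part (3) you propose ``adding multiples of $[F]$ to each $M_x$'' to realize a prescribed lift $\tilde\tau$. This is a degree mismatch ($M_x\in H_3E$ while $[F]\in H_2E$); Lemma \ref{proposition:derp} adjusts the surface classes $\Sigma_x$ inside a fixed $3$-manifold, and to implement it globally the paper instead adds classes $E_a\in H_3E$ to $M_z$, using $(M_z+E_a)\cdot E_b = \Sigma_{b,z} + i_B(a,b)[F]$. Since such an adjustment changes the intersection with every $E_b$ at once, realizing $\tilde\tau(b)$ for all $b$ simultaneously requires the symplectic-basis argument in the paper, which your sketch omits. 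The same degree error underlies your proposed shift $M_x\mapsto M_x+k_x[F]$ to arrange $C\cdot M_x=0$; an adjustment by $E_a$'s would moreover disturb the quadruple intersections, so this step cannot be handled by a normalization and must be proved as in the paper.
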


\para{Remark} The intersection pairing $H_{n-k} E \otimes H_{k} E \to \Z$ identifies $H_{n-k} E$ with $\Hom(H_k E, \Z)$ and hence with $H^k E$ by the universal coefficients theorem, since the homology of a surface bundle over a surface with monodromy in $\mathcal I_g$ is torsion-free (see Proposition \ref{proposition:morita}). Therefore, Proposition \ref{proposition:cups} can also be viewed as a description of the cup product in $H^*E$. 

\begin{proof}
Before beginning with the proof of the statements, a comment on orientations is in order. Recall that if $X,Y$ are embedded surfaces intersecting transversely, then $X \cap Y$ is oriented via the convention that
\[
N(X) \oplus N(Y) \oplus T (X \cap Y)
\]
should be positively oriented, where, for $W = X$ or $W= Y$, $N(W)$ is oriented by the convention that $N(W) \oplus T(W)$ be positively oriented with respect to the orientation fixed on $W$. Note that relative to this convention, if $X$ is of odd codimension, then $X\cdot X = 0$; we will often employ this fact without comment in the sequel.\\
\indent Recall that the submanifolds $\Sigma_x \subset M_\phi$ and $M_z \subset E$ have been oriented using a ``base-first'' convention; see (\ref{eqn:star}) and (\ref{eqn:starstar}). As remarked already in the proof of Proposition \ref{proposition:degree}, $E$ itself is oriented by selecting orientations for $B$ and $F$. It is a somewhat tedious process to go through and verify the signs on all of the intersections being asserted in this theorem, and we omit the full verification of these results. At the same time, the reader who is interested in verifying the calculations should have no trouble doing so by carefully tracking the orientation conventions we have laid out.\\

\indent It will turn out to be most natural to construct $C$ after verifying the other statements not involving $C$. We begin with computing $\Sigma_{a,z} \cdot \Sigma_{b,w}$. These are represented by surfaces contained in some $E_\alpha, E_\beta$ respectively, where they are tube surfaces constructed from curves $\gamma, \delta$. We can arrange it so that $\alpha, \beta$ intersect transversely, and such that over these points, the surfaces intersect in their tube regions. Following the orientation conventions as above, one verifies that the local intersection at such a point $(p,q)$, written $I_{(p,q)}$ is equal to $-I_pI_q$, where $I_p$ denotes the local intersection of $\alpha, \beta$ relative to the orientation on $B$, and $I_q$ is the local intersection of $\gamma, \delta$ relative to the orientation on $F$. Summing over all local intersections gives the result in the lower-right hand corner of the table in Proposition \ref{proposition:cups}.1.\\
\indent The relation $[F] \cdot \Sigma_{a,z}= 0$ is easy to verify, by taking $[F]$ to be represented by a fiber not contained in the $E_\alpha$ containing $\Sigma_{a,z}$. This same idea also shows $[F]^2 = 0$, by picking representative fibers over distinct points.\\

\indent Let us turn now to Proposition \ref{proposition:cups}.2. If $E_\alpha, E_\beta$ intersect transversely at a point, then $E_\alpha \cap E_\beta =  F$, the fiber over the point of intersection; a check of the orientation conventions shows that the orientation on $F$ given by the intersection convention agrees with the predetermined orientation, so that 
\[
E_a \cdot E_b = i_B(a,b)[F]
\]
as asserted.\\
\indent The manifolds $M_\gamma$ were constructed so as to intersect each $E_b$ in a tube surface, and so the relation
\[
M_z \cdot E_b = \Sigma_{b,z}
\]
can be taken as a definition of the orientation on $\Sigma_{b,z}$. We choose this over the alternative because it can be verified that under this convention, the orientation on $\Sigma_{b,z}$ agrees with the ``base first'' convention discussed above.\\
\indent Now let $M_x, M_y$ be given, and consider $M_x \cdot M_y \cdot [F]$. By perturbing the one-skeleton of $B$, it can be arranged so that the plugs for $M_x$ and $M_y$ are disjoint and so that the cap regions intersect transversely, and so that the representative fiber intersects $M_x, M_y$ in their tube regions. The local picture therefore becomes the intersection of $x$ and $y$ on $F$. A check of the orientation convention then shows
\[
M_x \cdot M_y \cdot [F] = i_F(x,y).
\]
\indent Turning to Proposition \ref{proposition:cups}.3, consider now a four-fold intersection
\[
M_x \cdot M_y \cdot M_z \cdot [E_\beta].
\]
We will assume without further comment that the intersection of representative submanifolds has been made suitably transverse by choosing one-skeleta wisely. The $M_w$ were constructed so that the problem of computing $M_x \cdot M_y \cdot M_z \cdot [E_\beta]$ is exactly the same as the problem of computing the corresponding $\Sigma_x \cdot \Sigma_y \cdot \Sigma_z$ inside the 3-manifold $E_\beta$, up to a sign which records whether the orientation on $M_x \cdot [E_\beta]$ agrees with the orientation on the corresponding $\Sigma_x \subset E_\beta$; the convention $M_x \cdot E_b = \Sigma_{x,b}$ makes this sign positive. Lemma \ref{proposition:derp} shows that within $E_b$, there exist choices of homology classes $\Sigma_x$ such that
\[
\Sigma_x \cdot \Sigma_y \cdot \Sigma_z = \tilde \tau(b)(x \wedge y \wedge z).
\]
Recall from Lemma \ref{proposition:derp} that the $\Sigma_x$'s are obtained by starting with an arbitrary family $\Sigma'_x$, and adding appropriate multiples of $[F]$. By the preceding, if $a \in B$ satisfies $i_B(a,b) = 1$, then
\[
(M_z + E_a) \cdot E_b = M_z \cdot E_b + [F].
\]
This shows that by adding appropriate multiples of $E_{a}$ to $M_z$ (as specified by the formulas in Lemma\ref{proposition:derp}), for a given $b$, the formula
\begin{equation}\label{e1}
M_x \cdot M_y \cdot M_z \cdot [E_\beta] = \tilde\tau(b)(x \wedge y \wedge z)
\end{equation}
can be made to hold. By choosing a symplectic basis for $H_1 B$, this can be made to hold for all $b \in H_1 B$ simultaneously.\\

\indent It therefore remains to construct the class $C$. If $x,y \in H_1\Sigma_g$ satisfy $i_F(x,y) = 1$, then $[F] \cdot M_x \cdot M_y = 1$. Similarly, if $\alpha, \beta$ are loops in $B$ intersecting transversely exactly once, and $M_x$, $M_y$ are as above, then 
\begin{equation}\label{e2}
\Sigma_{\alpha, x} \cdot \Sigma_{\beta, y} = \Sigma_{\alpha, x} \cdot M_x \cdot E_\beta = \pm 1.
\end{equation}
As the space spanned by $[F]$ and the $\Sigma_{b,x}$ classes has codimension one in $H_2 E$, (\ref{e1}) and (\ref{e2}) together show that the space of classes in $H_2 E$ pairing trivially with the space of $M_x$ has dimension at most one. We claim that
\[
C = M_{x_1} \cdot M_{y_1} + \sum_{(b,z) \in \mathcal B \times \mathcal F}\tilde \tau(b)(x_1 \wedge y_1\wedge z) \Sigma_{\hat b \hat z}
\]
has all the required properties; here $\mathcal B, \mathcal F$ are symplectic bases for $H_1 B, H_1 F$, respectively, the map $x \mapsto \hat x$ satisfies $i(x, \hat x) = 1$, $x_1 \in \mathcal B$, and $\hat x_1 = y_1$. Recall that $C$ is asserted to have the following properties: $C \cdot [F] = 1$ and $C \cdot \Sigma_{b,z} = 0$ for all $b \in H_1B, z \in H_1 \Sigma_g$. Additionally, when the monodromy of $E$ is contained in the Johnson kernel, we require $C^2 = 0$ and $C \cdot M_x = 0$ for $M_x$ in the family associated to the lift of $\tau$ to the zero homomorphism. The proof is a direct calculation. For $C\cdot[F]$, one has by Proposition \ref{proposition:cups}.1 and then Proposition \ref{proposition:cups}.2
\begin{align*}
C \cdot [F]  &= \left( M_{x_1} \cdot M_{y_1} + \sum_{(b,z) \in \mathcal B \times \mathcal F} \tilde\tau(b)(x_1 \wedge y_1 z) \Sigma_{\hat b \hat z}\right) \cdot [F]\\
		& = M_{x_1} \cdot M_{y_1} \cdot [F]\\
		& = 1.
\end{align*}
Computation of $C \cdot \Sigma_{b,z}$ proceeds by Proposition \ref{proposition:cups}.3 and Proposition \ref{proposition:cups}.1 respectively.
\begin{align*}
C \cdot \Sigma_{b,z} &= M_{x_1} \cdot M_{y_1} \cdot \Sigma_{b,z} +\tilde\tau(b)(x_1 \wedge y_1 \wedge z) (\Sigma_{\hat b \hat z}) \cdot \Sigma_{b,z}\\
&= \tilde \tau(b)(x_1 \wedge y_1 \wedge z) - \tilde\tau(b)(x_1 \wedge y_1 \wedge z)\\
&= 0.
\end{align*}

\indent When the monodromy of $E$ is contained in $\K_g$, the above formula for $C$ simplifies to $C = M_{x_1} \cdot M_{y_1}$, from which it is apparent that $C^2 = 0$. To see that $C \cdot M_x = 0$ for all $x$, we will apply Poincar\'e duality to see that it suffices to show that
\[
C \cdot M_x \cdot Y = 0
\]
for all classes $Y \in H_3 E$. Since $M_x \cdot E_b = \Sigma_{b x}$ and we have shown $C \cdot \Sigma_{b x} = 0$, it remains only to consider $C \cdot M_z \cdot M_w$. Expanding $M_z \cdot M_w$ in the additive basis for $H_2 E$,
\[
M_z \cdot M_w = \alpha [F] + \beta C + \sum_{(b,z) \in \mathcal B \times \mathcal F} \gamma_{b,z} \Sigma_{\hat b,\hat z}.
\]
As the monodromy of $E$ is contained in $\K_g$, we have $M_z \cdot M_w \cdot \Sigma_{b,x} = 0$; applying this in coordinates for some $(b,x) \in \mathcal B \times \mathcal F$ gives, by applying the prior formulas,
\begin{align*}
0 &= \left(\alpha [F] + \beta C + \sum_{(b,z) \in \mathcal B \times \mathcal F} \gamma_{b,z} \Sigma_{\hat b,\hat z}\right) \cdot \Sigma_{b,x}\\
&= -\gamma_{b,x}, 
\end{align*}
so that all $\gamma_{b,z} = 0$. Consequently, $M_z \cdot M_w = \alpha[F] + \beta C$. Recalling that $[F]^2 = C^2 = 0$ and that $(M_z \cdot M_w)^2 = 0$, this implies $\alpha \beta = 0$. 

Also,
\[
i_F(z,w) = M_z \cdot M_w \cdot [F] = \beta.
\]
Therefore, we conclude that in the case $i_F(z,w) \ne 0$,
\[
M_z \cdot M_w = i_F(z,w) C.
\]
As $C^2 = 0$ this shows the result in this case. Now suppose that $i_F(z,w) = 0$. Then we can find $z'$ such that $M_z \cdot M_{z'} = cC$ by above, with $c \ne 0$, and then
\[
0 = M_z \cdot M_w \cdot M_z \cdot M_{z'} = c M_z \cdot M_w \cdot C.
\]
This shows that $M_z \cdot M_w \cdot C = 0$ for all $z,w$, finishing the proof of Proposition \ref{proposition:cups}.
\end{proof}


\section{Multisections and splittings on rational cohomology}\label{section:appendix}
Let $p: E \to B$ be a surface bundle over an arbitrary base space $B$ equipped with a section $\sigma: B \to E$. Then there is an associated splitting of $H^1(E, \Z)$ as a direct sum, via
\begin{equation} \label{eqn:spl}
H^1(E, \Z) = \im p^* \oplus \ker \sigma^*. 
\end{equation}
The condition that $p: E \to B$ admit a section is restrictive. However, recent work of Hamenst\"adt shows that all surface bundles over surfaces with zero signature admit {\em multisections} (see Theorem \ref{theorem:hamen}). In this section, we develop some necessary machinery showing how a multisection of a surface bundle gives rise to a splitting of $H^1(E, \Q)$, similarly to (\ref{eqn:spl}). The results of this section will be required in the proof of Theorem \ref{theorem:double}.

\begin{remark} Theorem \ref{theorem:hamen} is the only result in this section that requires the base space $B$ to be a surface of genus $g \ge 2$. Lemma \ref{lemma:gamma} and Proposition \ref{proposition:qsection} are valid for any base space $B$.
\end{remark}

 Let $\Conf_n(E)$ denote the configuration space of $n$ unordered distinct points in $E$, and let $\PConf_n(E)$ denote the space of $n$ ordered distinct points in $E$. The symmetric group on $n$ letters $S_n$ acts freely on $\PConf_n(E)$ by permuting the order of the points, and $\PConf_n(E) / S_n = \Conf_n(E)$.

 By a {\em multisection} of $p: E \to B$, we mean a map
\[
\sigma: B \to \Conf_n(E)
\]
for some $n \ge 1$, such that the composition
\[
B \to \Conf_n(E) \to B^n/S_n
\]
is given by $x \mapsto [x,\dots, x]$. In other words, a multisection selects $n$ distinct unordered points in each fiber. A {\em pure multisection} is a map
\[
\sigma: B \to \PConf_n(E)
\]
such that the composition
\[
B \to \PConf_n(E) \to B^n
\]
is given by $x \mapsto (x,\dots,x)$. Our interest in multisections is due to the following result of Hamenst\"adt (see \cite{hamenstadt}):
\begin{theorem}{\bf{(Hamenst\"adt)}}\label{theorem:hamen}
Let $p: E \to B$ be a surface bundle over a surface such that the signature of $E$ is zero (e.g. a bundle with at least one fibering with monodromy lying in $\mathcal I_g$). Then $p: E \to B$ has a multisection $\sigma$ of cardinality $2g -2$. 
\end{theorem}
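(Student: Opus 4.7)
The plan is to construct the multisection as the zero locus of a smooth section of the vertical tangent bundle $T^v E \to E$, using the signature hypothesis to guarantee that this zero set is an honest unramified cover of $B$ rather than a signed divisor with cancellations. The bundle $T^v E$ is an oriented rank-$2$ real vector bundle on $E$ whose Euler class restricts to each fiber $F_b$ as $\chi(F_b)\cdot [\mathrm{pt}] = (2-2g)\cdot [\mathrm{pt}]$. A generic section $s$ has zero locus $Z \subset E$ of codimension $2$ meeting each fiber in a signed collection of $2-2g$ points.

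The main geometric task is to select $s$ so that all of its fiber zeros carry the same sign, in which case $|Z \cap F_b| = 2g-2$ geometrically for each $b$; a further transversality perturbation makes $p|_Z$ a submersion and $Z \to B$ an unramified $(2g-2)$-fold cover, i.e.\ the desired multisection. To arrange uniformity of sign, I would equip $E$ with a symplectic form making $p$ symplectic (always possible by Thurston, since the fiber class is nonzero in $H_2 E$) and with a compatible almost-complex structure $J$ preserving the fibers; $T^v E$ then inherits a complex-line-bundle structure, and a pseudo-holomorphic section of $T^v E$ has all zeros of positive sign by the standard positivity principle for holomorphic sections of line bundles.

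The signature hypothesis enters through Hirzebruch's formula $\sigma(E) = \tfrac{1}{3}\int_E p_1(TE)$. The splitting of characteristic classes $p_1(TE) = p_1(T^v E) + p^* p_1(TB)$, combined with $p_1(TB) = 0$ (since $TB$ has rank $2$ over the surface $B$) and the identity $p_1 = e^2$ for a rank-$2$ oriented bundle, yields $\sigma(E)=0$ if and only if $\int_E e(T^v E)^2 = 0$, i.e.\ the Poincar\'e dual cycle $Z$ has trivial self-intersection in $E$. This is exactly the condition needed to rule out the bubbling and multiply-covered fiber-components that would otherwise obstruct $Z$ from being a smoothly embedded multisection; it converts the Euler-class count $2-2g$ into a true geometric count of $2g-2$ sheets. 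I expect the hardest step to be making this exclusion argument rigorous in the smooth (as opposed to K\"ahler) category; Hamenst\"adt's original argument in \cite{hamenstadt} is likely more direct, sidestepping the almost-complex detour in favor of a construction with holomorphic $1$-forms on the fibers that still invokes the Hirzebruch identity as its crucial input.
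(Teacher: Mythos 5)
You are trying to prove a statement the paper does not prove at all: Theorem \ref{theorem:hamen} is quoted as an external result of Hamenst\"adt, cited to \cite{hamenstadt}, and the paper only uses it as a black box in Section \ref{section:appendix}. So there is no internal argument to compare with, and your sketch has to stand on its own; as written it has a genuine gap, in fact two. First, your positivity mechanism is applied to the wrong bundle. The vertical tangent bundle $T^v E$ has fiberwise Euler number $\chi(F)=2-2g<0$ for $g\ge 2$, so a fiberwise $J$-holomorphic section of $T^v E$ is a holomorphic section of a line bundle of negative degree on each fiber and must vanish identically there; the ``all zeros positive'' principle can never produce a section whose fiberwise signed count is $2-2g$. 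If all zeros of some section had a common sign it would have to be negative, which is exactly what pseudo-holomorphicity forbids. The natural repair is to work with the vertical \emph{cotangent} bundle, of fiberwise degree $2g-2$, whose fiberwise holomorphic sections are abelian differentials with $2g-2$ zeros counted with multiplicity --- which is what your closing sentence gestures at --- but then the whole construction has to be redone, and the problem becomes choosing a family of such differentials continuously over $B$ with \emph{simple} zeros on every fiber.

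Second, the step where the hypothesis $\sig(E)=0$ is supposed to do its work is asserted rather than argued. The identity $3\,\sig(E)=\int_E e(T^v E)^2$ (equivalently for the vertical cotangent bundle) is correct, but the vanishing of this single characteristic number does not, by genericity or otherwise, force the zero locus $Z$ of a section to meet every fiber in exactly $2g-2$ distinct points, nor does it make $p|_Z$ a submersion: transversality of $s$ to the zero section is a condition in $E$ and says nothing about vertical tangencies of $Z$ or about collisions of zeros over special points of $B$, and ``ruling out bubbling and multiply-covered fiber components'' is not a meaningful mechanism in this smooth setting. Producing, over every point of the base, an abelian differential with $2g-2$ simple zeros varying continuously (equivalently, lifting the classifying map to the principal stratum of the Hodge bundle) is precisely the hard content of Hamenst\"adt's theorem, and your proposal replaces it with the expectation that the numerical condition suffices. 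So you have correctly located the relevant identity between the signature and $e(T^v E)^2$, but the argument that converts the algebraic count $2-2g$ into an honest unramified $(2g-2)$-fold multisection is missing.
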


We will use this result to obtain a splitting on $H^*(E,\Q)$. As (\ref{eqn:spl}) indicates, this is straightforward when the multisection is pure; the work will be to obtain the required maps for general multisections. First note that by taking a finite cover $\tilde B \to B$, we can pull the bundle back to $\tilde p: \tilde E \to \tilde B$, such that the multisection pulls back to a pure multisection:
\[
\psi: \tilde B \to \PConf_n(\tilde E).
\]
Moreover, we can assume that the covering $\tilde B \to B$ is normal, with deck group $\Gamma$. By pulling back the $\Gamma$ action on $\tilde B$, we see that $\Gamma$ also acts on $\tilde E$, by sending the fiber over $b$ to the fiber over $\gamma(b)$. Then the multisection $\psi$ is in fact $\Gamma$-equivariant. This suggests the following lemma.
\begin{lemma}\label{lemma:gamma}
Let $\tilde \sigma: \tilde B \to \tilde E$ be a $\Gamma$-equivariant section. Then there is an induced map on $\Gamma$-invariant cohomology:
\[
\tilde \sigma^*: H^*(\tilde E, \Q)^\Gamma \to H^*(\tilde B, \Q)^\Gamma.
\]
As a result, the transfer map
\[
\tau^*: H^*(\tilde B, \Q) \to H^*(B, \Q)
\]
is injective when restricted to $\tilde \sigma^*(H^*(\tilde E, \Q)^\Gamma)$. 
\end{lemma}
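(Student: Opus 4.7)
The plan is to address the two assertions in turn, exploiting the equivariance of $\tilde\sigma$ for the first, and the standard transfer-pullback identity for finite covers for the second.

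For the first assertion, the key point is functoriality of pullback combined with the hypothesis. Writing $\gamma_B: \tilde B \to \tilde B$ and $\gamma_E: \tilde E \to \tilde E$ for the deck transformation actions on the total spaces, the equivariance of $\tilde \sigma$ is the statement $\tilde\sigma \circ \gamma_B = \gamma_E \circ \tilde\sigma$. Taking cohomology gives the commuting relation $\gamma_B^* \circ \tilde\sigma^* = \tilde\sigma^* \circ \gamma_E^*$. So if $\alpha \in H^*(\tilde E, \Q)^\Gamma$, then for every $\gamma \in \Gamma$,
\[
\gamma_B^*(\tilde\sigma^*(\alpha)) = \tilde\sigma^*(\gamma_E^*(\alpha)) = \tilde\sigma^*(\alpha),
\]
so $\tilde\sigma^*(\alpha) \in H^*(\tilde B, \Q)^\Gamma$. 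This shows $\tilde\sigma^*$ restricts to the claimed map.

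For the second assertion, I will invoke the standard fact that for a finite normal covering $p:\tilde B \to B$ with deck group $\Gamma$, there is an identification on rational cohomology $p^* : H^*(B, \Q) \xrightarrow{\cong} H^*(\tilde B, \Q)^\Gamma$, and the transfer (umkehr) map $\tau^*: H^*(\tilde B, \Q) \to H^*(B, \Q)$ satisfies the double-coset formula $p^* \circ \tau^* = \sum_{\gamma \in \Gamma} \gamma^*$. Restricting the right-hand side to $\Gamma$-invariant classes gives multiplication by $|\Gamma|$, which is invertible over $\Q$. Therefore, if $\beta \in H^*(\tilde B, \Q)^\Gamma$ and $\tau^*(\beta) = 0$, then $p^*\tau^*(\beta) = |\Gamma|\,\beta = 0$, so $\beta = 0$. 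In particular $\tau^*$ is injective on $H^*(\tilde B, \Q)^\Gamma$, and by the first paragraph this contains $\tilde\sigma^*(H^*(\tilde E, \Q)^\Gamma)$, as required.

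The only delicate point is ensuring that the $\tau^*$ under consideration is genuinely the transfer satisfying $p^*\tau^* = \sum_{\gamma} \gamma^*$, rather than some ad hoc construction. This is standard for finite covers in rational cohomology (it follows for instance from Poincar\'e duality together with the pushforward-pullback identity for proper maps, or directly by averaging cochains with coefficients in a field of characteristic zero), so I do not expect any real obstruction; the content is essentially just assembling the equivariance argument and the classical averaging identity.
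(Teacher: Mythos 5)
Your proof is correct and follows essentially the same route as the paper: equivariance of $\tilde\sigma$ gives the restriction to invariants, and the second claim comes from the fact that the transfer identifies $H^*(\tilde B,\Q)^\Gamma$ with $H^*(B,\Q)$ over $\Q$. Your derivation of injectivity from the identity $p^*\circ\tau^*=\sum_{\gamma\in\Gamma}\gamma^*$ is just a spelled-out version of the identification the paper cites, so there is nothing substantively different.
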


\begin{proof}
If $f: X \to Y$ is any $\Gamma$-equivariant map of topological spaces, then $f^*: H^*Y \to H^*X$ will be equivariant, and so will restrict to a map on the $\Gamma$-invariant subspaces. Transfer (see \cite{hatcher}) gives an identification $H^*(\tilde B, \Q)^\Gamma \approx H^*(B, \Q)$, and the remaining statement follows. 
\end{proof}

We now come to the main result of the section. This asserts that when $p: E \to B$ is a surface bundle with a multisection $\sigma: B \to \Conf_n(E)$, there exists a map $\hat \sigma^*: H^*(B, \Q) \to H^*(E, \Q)$ with many of the same properties as (the pullback of) an actual section map.
\begin{proposition}\label{proposition:qsection}
Suppose $\sigma: B \to \Conf_n(E)$ is a multisection. Then there exist maps
\begin{align*}
\hat \sigma^*:& H^*(E, \Q) \to H^*(B,\Q)\\
\hat \sigma_*:& H_*(B, \Q) \to H_*(E, \Q)
\end{align*}
with the following properties:
\begin{packed_enum}
\item The composition 
\[
\hat \sigma^* \circ p^*: H^*B \to H^* B = \id
\]
and similarly
\[
p_* \circ \hat \sigma_*: H_*B \to H_* B = \id.
\]

\item	The maps $\hat \sigma^*$ and $\hat \sigma_*$ are adjoint under the evaluation pairing. That is, for all $\alpha \in H^*E, x \in H_*B$,
\[
\pair{\alpha, \hat \sigma_* x} = \pair{\hat \sigma^* \alpha, x}.
\]

\item If $\alpha \in \ker \hat \sigma^*$, then for any $\beta \in H^*(E,\Q)$ and any $x \in H_*(B, \Q)$,
\[
\pair{\alpha \smile \beta, \hat \sigma_*(x)} = 0.
\]

\end{packed_enum}
Consequently, $\hat \sigma^*$ induces a splitting
\begin{equation}\label{spl}
H^1(E, \Q) = \im p^* \oplus \ker \hat \sigma^*.
\end{equation}

\end{proposition}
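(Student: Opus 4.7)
The plan is to construct $\hat\sigma^*$ and $\hat\sigma_*$ from a single $\Gamma$-equivariant section of the pulled-back bundle over $\tilde B$, rather than symmetrically averaging the $n$ sections of the pure lift $\psi$. The key observation, to record at the outset, is that because $\Gamma$ acts on $\PConf_n(\tilde E)$ through the diagonal action induced from its action on $\tilde E$, the $\Gamma$-equivariance of $\psi = (\tilde\sigma_1, \ldots, \tilde\sigma_n)$ unpacks componentwise: each individual section $\tilde\sigma_i: \tilde B \to \tilde E$ is already $\Gamma$-equivariant, placing us directly in the hypothesis of Lemma \ref{lemma:gamma}.

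Fix any one such section, say $\tilde\sigma := \tilde\sigma_1$, and define
\[
\hat\sigma^*: H^*(E, \Q) \cong H^*(\tilde E, \Q)^\Gamma \xrightarrow{\tilde\sigma^*} H^*(\tilde B, \Q)^\Gamma \cong H^*(B, \Q),
\]
where the outer identifications are the rational-transfer isomorphisms furnished by Lemma \ref{lemma:gamma}. Define $\hat\sigma_*$ dually from the corresponding homology pushforwards and transfer. Property (1) is then routine: it reduces to $\tilde\sigma^* \circ \tilde p^* = \id$, which holds because $\tilde\sigma$ is a genuine section, and its homological dual. Property (2) follows by combining the usual adjointness between $\tilde\sigma^*$ and $\tilde\sigma_*$ with the standard adjointness of cohomology and homology transfer.

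Property (3) is the substantive point, and it is exactly here that the single-section construction earns its keep: $\hat\sigma^*$ is a composition of three ring homomorphisms (the two transfer identifications plus $\tilde\sigma^*$), hence is itself a ring homomorphism, and so $\ker\hat\sigma^*$ is an ideal in $H^*(E, \Q)$. Therefore $\hat\sigma^*\alpha = 0$ implies $\hat\sigma^*(\alpha \smile \beta) = \hat\sigma^*\alpha \smile \hat\sigma^*\beta = 0$, and (3) follows from (2) via
\[
\pair{\alpha \smile \beta, \hat\sigma_*(x)} = \pair{\hat\sigma^*(\alpha \smile \beta), x} = 0.
\]
The splitting on $H^1$ is immediate from (1), which gives $\im p^* \cap \ker \hat\sigma^* = 0$, with the direct-sum decomposition following from the dimensional identities supplied by Proposition \ref{proposition:morita}.

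The main obstacle I anticipate is resisting the natural-looking symmetric construction $\hat\sigma^* = \tfrac{1}{n} \sum \tilde\sigma_i^*$. That map satisfies (1) and (2) but genuinely fails (3): averaging destroys the ring homomorphism property, and direct computation with a simple two-graph multisection of $T^2 \times T^2 \to T^2$ exhibits $\alpha \in \ker \hat\sigma^*$ and $\beta \in H^1(E,\Q)$ for which $\hat\sigma^*(\alpha \smile \beta) \ne 0$. The fix, as above, is to observe that the $\Gamma$-equivariance of $\psi$ is already componentwise, so no averaging is needed and any single $\tilde\sigma_i$ suffices.
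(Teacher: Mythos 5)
Your construction rests on a claim that is false in general: that the $\Gamma$-equivariance of the pure lift $\psi = (\tilde\sigma_1,\dots,\tilde\sigma_n)$ ``unpacks componentwise.'' The deck group does not fix each branch; it permutes them. The multisection $\sigma: B \to \Conf_n(E)$ is classified by a monodromy homomorphism $\pi_1 B \to S_n$, and the cover $\tilde B \to B$ is chosen exactly so that this monodromy dies on $\pi_1 \tilde B$; a deck transformation $\gamma$ then satisfies $\gamma \cdot \tilde\sigma_i(b) = \tilde\sigma_{\pi_\gamma(i)}(\gamma b)$ for a generally nontrivial permutation $\pi_\gamma$. Only the induced map to $\Conf_n(\tilde E)$ (equivalently, the unordered collection, or any symmetric expression in the branches) is $\Gamma$-equivariant. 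If a single branch $\tilde\sigma_1$ were $\Gamma$-equivariant it would descend to an honest section of $E \to B$, and the whole multisection apparatus would be unnecessary; Hamenst\"adt's theorem supplies only a $(2g-2)$-point multisection precisely because sections need not exist. Consequently $\tilde\sigma_1^*$ need not carry $H^*(\tilde E,\Q)^\Gamma$ into $H^*(\tilde B,\Q)^\Gamma$, your definition of $\hat\sigma^*$ via the two invariant-subspace identifications is not available, and the argument for (3) --- that $\hat\sigma^*$ is a composite of ring homomorphisms, so $\ker\hat\sigma^*$ is an ideal --- collapses with it. This is not a repairable presentation issue: any construction from a single branch depends on the choice of branch and fails equivariance, which is exactly the difficulty the proposition has to overcome.

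Your stated ``obstacle'' is also spurious: the symmetric average is the correct (and, essentially, the paper's) construction, and averaging is what restores $\Gamma$-equivariance, since $\Gamma$ permutes the summands. Property (3) for the averaged map is genuinely more delicate than (1) and (2) --- the transfer is not a ring map --- but it does hold: from $\hat\sigma^*\alpha = 0$ one deduces, using that the transfer is injective on the image of the invariant subspace (Lemma \ref{lemma:gamma}), that the averaged pullback of $\alpha$ upstairs vanishes identically, and then one evaluates $\pair{\alpha\smile\beta,\hat\sigma_*(x)}$ on the cover, where the pure-multisection computation applies. Your claimed two-graph counterexample on $T^2 \times T^2 \to T^2$ cannot occur: if the graphs of $\phi_1,\phi_2: T^2 \to T^2$ are disjoint, the difference map misses a point, forcing $(\phi_1^*-\phi_2^*)f_1 \smile (\phi_1^*-\phi_2^*)f_2 = 0$; writing $\alpha = p^*u + \pi_F^*v \in \ker\hat\sigma^*$ and $\beta = p^*w + \pi_F^*t$, a short computation gives
\[
\hat\sigma^*(\alpha\smile\beta) = \tfrac14\,(\phi_1^*-\phi_2^*)v \smile (\phi_1^*-\phi_2^*)t = 0,
\]
so any example exhibiting failure of (3) must use sections whose graphs intersect, i.e.\ not a multisection.
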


\begin{proof}
Begin by assuming that the multisection is pure. For $i = 1, \dots, n$ let $p_i: \PConf_n(E) \to E$ be the projection onto the $i^{th}$ coordinate. We define
\begin{align*}
\hat \sigma^*(\alpha) &= \frac{1}{n} \sum_{i = 1}^n \sigma^*(p_i^*(\alpha))\\
\hat \sigma_*(x) &= \frac{1}{n} \sum_{i = 1}^n (p_i)_*(\sigma_*(x)).
\end{align*}
Then properties (\ref{proposition:qsection}.1) - (\ref{proposition:qsection}.3) follow by direct verification.

In the general case, let $c: \tilde B \to B$ be a normal covering such that $\sigma$ pulls back to a pure multisection $\psi$. We will use $\bar c$ to denote the covering $\tilde E \to E$. Let $\tau^*: H^*(\tilde B, \Q) \to H^*(B,\Q)$ be the transfer map, normalized so that $c^* \circ \tau^* = \id$. Then define $\hat \sigma^*: H^*(E, \Q) \to H^*(B,\Q)$ by
\[
\hat \sigma^* = \tau^* \circ \hat \psi^* \circ \bar c^*.
\]
Similarly, define $\hat \sigma_*: H_*(B, \Q) \to H_*(E, \Q)$ by
\[
\hat \sigma_* = \bar c_* \circ \hat \psi_* \circ \tau_*.
\]

For what follows, it will be useful to refer to the following diagram.
\[
\xymatrix{
H^*(\tilde E) \ar@<1ex>[d]^{\hat \psi^*} \ar@<1ex>[r]^{\tau^*}	& H^*(E) \ar@<1ex>[l]^{\bar c^*} \ar@<1ex>@{.>}[d]^{\hat \sigma} \\
H^*(\tilde B) \ar@<1ex>[u]^{\tilde p^*} \ar@<1ex>[r]^{\tau^*}	& H^*(B) \ar@<1ex>[l]^{c^*} \ar@<1ex>[u]^{p^*}
}
\]
By definition, 
\[
\hat \sigma^* \circ p^* =  \tau^* \circ \hat \psi^* \circ \bar c^* \circ p^*.
\]
By commutativity, $\bar c^* \circ p^* = \tilde p^* \circ c^*$. Then
\begin{align*}
\tau^* \circ \hat \psi^* \circ \bar c^* \circ p^* &= \tau^* \circ \hat \psi^* \circ \tilde p^* \circ c^*\\
&= \tau^* \circ c^*\\
&= \id.
\end{align*}
Here, we have used the property $\hat \psi^* \circ \tilde p^* = \id$ for the pure multisection $\psi$,  as well as our normalization convention $\tau^* \circ c^* = \id$ for the transfer map. A similar calculation proves the corresponding result for $\hat \psi_*$, and (\ref{proposition:qsection}.1) follows. 

Statement \ref{proposition:qsection}.2 follows from the observation that the cohomology and homology transfer maps are adjoint under the evaluation pairing. That is, if $\tilde X \to X$ is a normal covering space with deck group $\Gamma$, then for $x \in H_*X$ and $\alpha \in H^*\tilde X$,
\[
\pair{\alpha, \tau_* (x)} = \pair{\tau^*(\alpha), x}.
\]
As $\hat \psi^*$ and $\bar c^*$ certainly also enjoy this adjointness property, so does $\hat \sigma^*$, and (\ref{proposition:qsection}.2) follows. 

To establish (\ref{proposition:qsection}.3), suppose $\alpha \in \ker \hat \sigma^*$, and take $\beta \in H^*(E, \Q), x \in H_*(B, \Q)$. As the transfer map is not a ring homomorphism, (\ref{proposition:qsection}.3) does not follow immediately from (\ref{proposition:qsection}.2). However, we see that
\begin{align*}
\pair{\alpha \smile \beta, \hat\sigma_*(x)} &= \pair{\hat \sigma^*(\alpha \smile \beta), x}\\
&= \pair{\tau^*((\hat \psi^* \circ \bar c^*)(\alpha) \smile (\hat \psi^* \circ \bar c^*)(\beta)), x}.
\end{align*}
It therefore suffices to show that $\hat \psi^* \circ \bar c^* (\alpha) = 0$. This follows by Lemma \ref{lemma:gamma}. Indeed, $\bar c ^*(\alpha) \in H^*(\tilde E, \Q)^\Gamma$, and $\hat \psi^*$, being a sum of $\Gamma$-equivariant maps, is itself $\Gamma$-equivariant, and so $\hat \psi^* \circ \bar c^*$ takes image in $H^*(\tilde B, \Q)^\Gamma$. On the one hand, we have
\[
0 = \hat \sigma^*\alpha =  \tau^* \circ \hat \psi^* \circ \bar c^*(\alpha)
\]
by assumption. By Lemma \ref{lemma:gamma}, $\tau^*$ is injective on the image of $\hat \psi^* \circ \bar c^*$, so that $\hat \psi^* \circ \bar c^*(\alpha) = 0$ as desired. \end{proof}


\section{Unique fibering in the Johnson kernel}\label{section:proof}
This section is devoted to the proof of Theorem \ref{theorem:double}. The outline is as follows. Let $p_1: E \to B_1$ be a surface bundle with monodromy in the Torelli group $\mathcal I_g$, and suppose there is a second distinct fibering $p_2: E \to B_2$ with fiber $F_2$. The proof proceeds by analyzing $[F_2]$ in the coordinates on $H_*E$ coming from the Torelli fibering $p_1$. On the one hand, the intersection form in these coordinates is completely understood by virtue of Proposition \ref{proposition:cups}. On the other, $[F_2]$ is realizable as an intersection of classes induced from $H_1 B_2$. Under the assumption that the monodromy of $p_1$ is contained in $\K_g$ and not merely $\mathcal I_g$, it will follow that there is a unique possibility for $[F_2]$. The final step will be to extract the condition that the genera of $F_2$ and $B_1$ must be equal from the cohomology ring $H^*E$ and to argue that this enforces the triviality of either bundle structure.\\

\para{The fundamental class of a second fiber} In this subsection we will compute $[F_2]$ in the coordinates on $H_2$ coming from the fibering $p_1$. The results are formulated under the more general assumption that the monodromy of $p_1$ lie in $\mathcal I_g$ rather than $\mathcal K_g$, because we feel that the arguments are clearer in this larger context. The main objective is Lemma \ref{proposition:ef2}.\\
\indent Suppose that $p_1: E \to B_1$ is a bundle with monodromy lying in $\mathcal I_g$. Choose a lift $\tilde \tau$ of the Johnson homomorphism to $\wedge^3 H$; then by Proposition \ref{proposition:cups}.3, there is a natural splitting
\[
H_3 E \approx p_1^! H_1 B_1 \oplus H_1 F_1
\]
We use this direct sum decomposition to define the projections
\[
P: H_3 E \to p_1^! H_1 B_1 \qquad \mbox{and} \qquad Q: H_3 E \to H_1 F,
\]
and we consider the restrictions of $P$ and $Q$ to $p_2^! H_1 B_2$ for a second fibering $p_2: E \to B_2$. Where convenient, we will also define $P$ and $Q$ on $H_1B_2$ directly, by precomposing with the injection $p^!$. 
\begin{lemma}\label{proposition:Qsym}
For any second fibering $p_2: E \to B_2$, the restriction of $Q$ to $H_1 B_2$ is a symplectic mapping, with respect to $d\,i_{F_1}$ on $H_1 F_1$ and $i_{B_2}$ on $H_1 B_2$, where $d = [F_1]\cdot[F_2]$ is the algebraic intersection number of the two fibers. 
\end{lemma}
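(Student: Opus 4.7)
The plan is to compute the triple intersection $E_\alpha \cdot E_\beta \cdot [F_1]$ in two different ways and equate the results. Here $\alpha, \beta \in H_1 B_2$ and $E_\alpha, E_\beta = p_2^!(\alpha), p_2^!(\beta) \in H_3 E$ are the induced sub-bundle classes coming from the second fibering.

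First, viewing $E_\alpha \cdot E_\beta$ through the lens of the $p_2$-fibering, Proposition \ref{proposition:cups}.2 applied to $p_2$ gives
\[
E_\alpha \cdot E_\beta = i_{B_2}(\alpha,\beta)\,[F_2],
\]
so that $E_\alpha \cdot E_\beta \cdot [F_1] = i_{B_2}(\alpha,\beta)\,[F_1]\cdot[F_2] = d\,i_{B_2}(\alpha,\beta)$.

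Now compute the same quantity using the splitting from $p_1$. Write
\[
E_\alpha = P(E_\alpha) + Q(E_\alpha) = p_1^!(a_\alpha) + M_{z_\alpha}, \qquad E_\beta = p_1^!(a_\beta) + M_{z_\beta},
\]
where $a_\alpha, a_\beta \in H_1 B_1$ and $z_\alpha = Q(\alpha)$, $z_\beta = Q(\beta) \in H_1 F_1$. Expanding $E_\alpha \cdot E_\beta$ produces four terms. By Proposition \ref{proposition:cups}.2, $p_1^!(a_\alpha) \cdot p_1^!(a_\beta) = i_{B_1}(a_\alpha, a_\beta)[F_1]$, which pairs with $[F_1]$ to give $0$ since $[F_1]^2 = 0$ (Proposition \ref{proposition:cups}.1). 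The two cross terms $p_1^!(a_\alpha) \cdot M_{z_\beta}$ and $M_{z_\alpha} \cdot p_1^!(a_\beta)$ are $\Sigma_{a_\alpha, z_\beta}$ and $\Sigma_{a_\beta, z_\alpha}$ respectively, and these each pair trivially with $[F_1]$, again by the intersection table of Proposition \ref{proposition:cups}.1. The only surviving term is $M_{z_\alpha} \cdot M_{z_\beta} \cdot [F_1] = i_{F_1}(z_\alpha, z_\beta)$, by the last clause of Proposition \ref{proposition:cups}.2.

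Equating the two computations yields
\[
d\, i_{B_2}(\alpha,\beta) = i_{F_1}(Q(\alpha), Q(\beta)),
\]
which is precisely the assertion that $Q|_{H_1 B_2}$ is symplectic with respect to the forms $i_{B_2}$ and $d\,i_{F_1}$. No genuine obstacle arises in this argument: the entire proof is bookkeeping once one has the idea of isolating the intersection with $[F_1]$ in order to kill the terms depending on $P(E_\alpha)$ and on the cross components, thereby extracting only the ``fiber-to-fiber'' data encoded by $Q$.
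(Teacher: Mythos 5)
Your proof is correct and follows essentially the same route as the paper: both expand $p_2^!\alpha \cdot p_2^!\beta$ (which equals $i_{B_2}(\alpha,\beta)[F_2]$) in the $P\oplus Q$ coordinates coming from $p_1$, intersect with $[F_1]$ so that all terms involving $P$ and the cross terms $\Sigma_{b,z}$ die by the intersection table of Proposition \ref{proposition:cups}, and are left with $i_{F_1}(Q\alpha,Q\beta) = d\,i_{B_2}(\alpha,\beta)$. The only cosmetic differences are that the paper normalizes to a pair with $i_{B_2}(x,y)=1$ while you treat general $\alpha,\beta$, and that your citation of Proposition \ref{proposition:cups}.2 ``applied to $p_2$'' is legitimate because the formula $E_\alpha\cdot E_\beta = i_{B_2}(\alpha,\beta)[F_2]$ is a general geometric fact about sub-bundles over transverse curves and does not use the Torelli hypothesis--exactly the fact the paper also uses implicitly for the second fibering.
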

\begin{proof}
There exist classes $x,y \in H_1 B_2$ such that $x \cdot y = 1 \in H_0 B_2$, so that $[F_2] = p_2^! x \cdot p_2^! y$, and there are expressions
\[
p_2^! x = Px + Qx, \qquad p_2^! y = Py + Qy.
\]
Consequently,
\[
[F_2] = Px \cdot Py + Px \cdot Qy - Py \cdot Qx + Qx \cdot Qy.
\]
By Proposition \ref{proposition:cups}, $[F_1] \cdot Pz = 0$ for all $z \in H_1 B_2$, so that
\[
d = [F_1] \cdot [F_2] = [F_1] \cdot Qx \cdot Qy,
\]
with the first equality holding by assumption. The condition $[F_2] = p_2^! x \cdot p_2^! y$ is equivalent to $i_{B_2}(x,y) = 1$. By Proposition \ref{proposition:cups},
\[
 d = [F_1] \cdot Qx \cdot Qy = i_{F_1}(Qx, Qy),
\]
proving the claim.
\end{proof}

\indent As in the above proof, let $x,y \in H_1 B_2$ satisfy $x \cdot y = 1$. By Poincar\'e duality, in order to determine $[F_2]$ it suffices to determine the collection of cup products $[F_2]\cdot Z$ for $Z \in H_2 E$. Relative to the splitting of $H_2 E$ coming from $p_1$ (where the monodromy lies in $\mathcal I_g$), in particular we must determine $[F_2]\cdot \Sigma_{b,z}$, where $b \in H_1 B_1$ and $z \in H_1 F_1$. 

\begin{lemma}\label{proposition:etaformula}
Take $x,y \in H_1 B_2$ satisfying $x \cdot y =1$. For $b \in H_1 B_1$ and $z \in H_1 F_1$, let $\Sigma_{b,z}$ be the associated element of $H_2 E$. Then
\begin{equation}\label{equation:etaformula}
[F_2] \cdot \Sigma_{b,z} = i_{B_1}(Px,b)i_{F_1}(Qy,z) - i_{B_1}(Py,b)i_{F_1}(Qx,z) + \tau(b)(Qx \wedge Qy \wedge z).
\end{equation}
In particular, if $z \in \langle Qx, Qy \rangle^\perp$, then (\ref{equation:etaformula}) simplifies to
\begin{equation}\label{equation:etatau}
[F_2] \cdot \Sigma_{b,z} = \tau(b)(Qx \wedge Qy \wedge z).
\end{equation}
In fact, for all $z \in H_1 F_1$, there exists a pair $x_z,y_z \in H_1 B_2$ such that $z \in \langle Qx_z, Qy_z \rangle^\perp$ holds, so that for all $b,z$, (\ref{equation:etatau}) is satisfied for this choice of $x_z,y_z$. 
\end{lemma}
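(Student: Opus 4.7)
The plan is to expand $[F_2] = p_2^!(x) \cdot p_2^!(y)$ in the direct-sum decomposition $H_3 E = p_1^!(H_1 B_1) \oplus H_1 F_1$ coming from Proposition~\ref{proposition:cups}.3, then pair each summand with $\Sigma_{b,z}$ using the intersection data catalogued in Proposition~\ref{proposition:cups}. Writing $p_2^! x = p_1^!(Px) + M_{Qx}$ and similarly for $y$, the product splits into four pieces:
\[
[F_2] = p_1^!(Px) \cdot p_1^!(Py) + p_1^!(Px) \cdot M_{Qy} + M_{Qx} \cdot p_1^!(Py) + M_{Qx} \cdot M_{Qy}.
\]

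The first piece is a multiple of $[F_1]$ by Proposition~\ref{proposition:cups}.2 and hence annihilates $\Sigma_{b,z}$. The last piece, intersected with $\Sigma_{b,z} = M_z \cdot E_b$, becomes the quadruple intersection $M_{Qx} \cdot M_{Qy} \cdot M_z \cdot E_b$, which Proposition~\ref{proposition:cups}.3 evaluates as $\tau(b)(Qx \wedge Qy \wedge z)$. For the two middle pieces, I would use the relation $M_w \cdot E_a = \Sigma_{a,w}$ to rewrite each as a product of two $\Sigma$-classes, and then apply the formula $\Sigma_{a,z} \cdot \Sigma_{b,w} = - i_{B_1}(a,b)\, i_{F_1}(z,w)$. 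A careful sign check is required: commuting two classes of codimension one in a four-manifold produces a sign $(-1)^{(n-i)(n-j)} = -1$, which must be reconciled against the minus sign in the $\Sigma \cdot \Sigma$ pairing. Carrying these cancellations through yields exactly the signs in~(\ref{equation:etaformula}). The specialization to~(\ref{equation:etatau}) under $z \in \langle Qx, Qy\rangle^\perp$ is then immediate, as the first two summands vanish.

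The main remaining task, and the subtler part of the argument, is the final existence claim. Fix $z \in H_1 F_1$ and set $V = \{w \in H_1 B_2 : i_{F_1}(Qw, z) = 0\}$, the kernel of the linear functional $w \mapsto i_{F_1}(Qw, z)$; then $V$ is a primitive subgroup of $H_1 B_2$ of codimension $0$ or $1$. If $V = H_1 B_2$, any symplectic pair will serve. Otherwise, by unimodularity of $i_{B_2}$ there is a primitive $v \in H_1 B_2$ with $V = v^\perp$, and since $v$ lies in its own symplectic complement the restriction of $i_{B_2}$ to $V$ has radical exactly $\langle v\rangle$. The quotient $V / \langle v\rangle$ then inherits a unimodular symplectic form of rank $2(g(B_2) - 1) \ge 2$, so it admits an integral symplectic basis whose first pair can be lifted to $(x_z, y_z) \in V$ with $i_{B_2}(x_z, y_z) = 1$. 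The key inputs here are the assumption $g(B_2) \ge 2$ (ensuring the quotient has positive rank) and the fact that primitive subgroups of a unimodular symplectic lattice of codimension one are symplectic complements of primitive vectors.
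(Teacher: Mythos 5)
Your proposal is correct. The derivation of (\ref{equation:etaformula}) and (\ref{equation:etatau}) — expanding $[F_2]=p_2^!x\cdot p_2^!y$ as $(E_{Px}+M_{Qx})\cdot(E_{Py}+M_{Qy})$ and pairing term by term with $\Sigma_{b,z}$ via Proposition \ref{proposition:cups} — is exactly what the paper means when it says the formulas "follow directly from the description of the intersection form"; your sign bookkeeping (the anticommutation of the two codimension-one classes against the minus sign in $\Sigma_{a,z}\cdot\Sigma_{b,w}=-i_{B_1}(a,b)i_{F_1}(z,w)$) does land on the stated signs. Where you genuinely diverge is the existence claim. The paper argues inside $H_1F_1$: by Lemma \ref{proposition:Qsym} the image $W=\im Q$ is a symplectic subspace, one writes $z=w+w'$ with $w\in W$, extends $w$ to a symplectic basis of $W$, and chooses $x_z,y_z$ as $Q$-preimages of a later basis pair $x_2,y_2$ (using genus of $B_2\ge 2$). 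You instead work entirely in $H_1B_2$: the pair is found inside the kernel $V$ of the functional $w\mapsto i_{F_1}(Qw,z)$, using unimodularity of $i_{B_2}$ to identify $V$ (when proper) with $v^\perp$ for a primitive $v$ and lifting a symplectic pair from $V/\langle v\rangle$. Both arguments are short symplectic linear algebra, but yours buys two things: it does not need Lemma \ref{proposition:Qsym} at all, and it produces $i_{B_2}(x_z,y_z)=1$ on the nose over $\Z$, whereas the paper's preimage step implicitly has to contend with the fact that $Q$ rescales the form by $d=[F_1]\cdot[F_2]$ (so a symplectic pair in $W$ for $i_{F_1}$ pulls back to a pair of $i_{B_2}$-pairing $1/d$ unless one normalizes by $d$ — harmless in the eventual Johnson-kernel application where $d=1$, but your route avoids the issue). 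The only feature of the paper's more explicit construction that yours lacks is the description of $Qx_z,Qy_z$ as part of a symplectic basis of $W$; since the later uses (Lemma \ref{lemma:P}, Lemma \ref{proposition:ef2}) only invoke the defining property that (\ref{equation:etatau}) holds for the chosen pair, your pair serves equally well there.
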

\begin{proof}
The formulas in (\ref{equation:etaformula}) and (\ref{equation:etatau}) follow directly from the description of the intersection form given in Proposition \ref{proposition:cups}. The existence of a suitable $x,y$ for a given $z$ is nothing but a matter of symplectic linear algebra. Since we will use some features of the construction later on, we give a detailed explanation. Lemma \ref{proposition:Qsym} shows that $W = \im Q$ is a symplectic subspace of $H_1 F_1$, and so we can take a symplectic complement $W^\perp$. Any $z$ can therefore be written as $w + w'$ with $w \in W$ and $w' \in W^\perp$. If $w = 0$ there is nothing to show. Otherwise, extend $w$ to a symplectic basis for $W$ so that $w = x_1$. As $B_2$ has genus $\ge 2$, this basis includes $x_2, y_2$, and as $W = \im Q$, we can select $x_z,y_z$ in $H_1 B_2$ with $Qx_z = x_2$ and $Qy_z = y_2$.
\end{proof}

\indent We conclude this subsection by amalgamating the work we have done in the previous two propositions in order to give a description of $[F_2]$. 
\begin{lemma}\label{proposition:ef2}
Let $p_2: E \to B_2$ be a second fibering. The choice of lift $\tilde \tau$ of the Johnson homomorphism furnishes $H_2 E$ with the following splitting 
\[
H_2 E = \pair{[F_1]} \oplus (H_1B_1\otimes H_1 F_1) \oplus H_2 B_1,
\]
with $H_1B_1\otimes H_1 F_1$ spanned by the set of $\Sigma_{b,z}$ where $b,z$ range in symplectic bases $\mathcal{B}, \mathcal{F}$ for $H_1 B_1, H_1 F_1$ respectively, and $H_2 B_1$ spanned by $C$ as in Proposition \ref{proposition:cups}. Relative to this splitting of $H_2E$ there is the following expression for $[F_2]$:
\begin{equation}\label{equation:eta2}
[F_2]= (\delta - 2de) [F_1] + d C + \sum_{b \in \mathcal B, z \in \mathcal F} \tilde\tau(b)(Qx_z \wedge Qy_z \wedge z) \Sigma_{\hat b \hat z}.
\end{equation}
Here, $\delta = i_{B_1}(Px,Py) + Qx \cdot Qy \cdot C$ for any choice of $x,y \in H_1 B_2$ satisfying $x \cdot y = 1$, $e = C^2$, and $d = [F_1] \cdot [F_2]$ (the algebraic intersection of the two fibers). Also $\hat x$ denotes the symplectic dual of $x$ relative to the chosen symplectic basis. 
\end{lemma}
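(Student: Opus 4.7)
The plan is to determine $[F_2]$ by expanding it in the chosen basis for $H_2 E$ and reading off each coefficient via the intersection pairing computed in Proposition \ref{proposition:cups}. Write
\[
[F_2] = A [F_1] + B C + \sum_{b \in \mathcal B, z \in \mathcal F} c_{b,z} \Sigma_{\hat b, \hat z},
\]
and recall that $[F_1]$ pairs nontrivially (among basis elements) only with $C$, that $C^2 = e$, and that under the duality convention $i(x, \hat x) = 1$ the class $\Sigma_{\hat b, \hat z}$ pairs (up to a fixed sign) only with $\Sigma_{b, z}$. Poincar\'e duality guarantees that each coefficient can be recovered from a suitable intersection number.

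To find $B$, intersect with $[F_1]$: every basis vector save $C$ pairs trivially with $[F_1]$, so $[F_2] \cdot [F_1] = B$, and by the definition of $d$ this equals $d$, giving $B = d$. To find each $c_{b,z}$, intersect with $\Sigma_{b,z}$: only the summand $c_{b,z} \Sigma_{\hat b, \hat z}$ contributes a nonzero pairing, so $[F_2] \cdot \Sigma_{b,z}$ pins down $c_{b,z}$ up to the sign prescribed by Proposition \ref{proposition:cups}.1. Lemma \ref{proposition:etaformula} then produces a pair $(x_z, y_z) \in H_1 B_2$ depending only on $z$ for which $[F_2] \cdot \Sigma_{b,z} = \tilde\tau(b)(Qx_z \wedge Qy_z \wedge z)$ holds uniformly in $b \in \mathcal B$, yielding the coefficients in the stated formula. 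The essential point is that $(x_z, y_z)$ depends on $z$ alone, so the same auxiliary pair serves simultaneously for every basis element $b$ in that row.

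To find $A$, intersect with $C$: on the one hand, the expansion gives $[F_2] \cdot C = A + B e = A + de$. On the other hand, for any $x, y \in H_1 B_2$ with $i_{B_2}(x, y) = 1$, we have the identity $[F_2] = p_2^!(x) \cdot p_2^!(y)$; decomposing $p_2^!(x) = E_{Px} + M_{Qx}$ and similarly for $y$ and expanding via Proposition \ref{proposition:cups}.2 produces four pieces: a multiple of $[F_1]$ with coefficient $i_{B_1}(Px, Py)$, two mixed $\Sigma$-type classes that pair trivially with $C$, and $M_{Qx} \cdot M_{Qy}$. Pairing with $C$ and applying Proposition \ref{proposition:cups}.1 yields
\[
[F_2] \cdot C = i_{B_1}(Px, Py) + M_{Qx} \cdot M_{Qy} \cdot C,
\]
which by definition of $\delta$ is the scalar $\delta$ plus the contributions from the $C$-component of $M_{Qx} \cdot M_{Qy}$ (itself expanded into the basis of $H_2 E$). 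Comparing with $A + de$ and carrying out the bookkeeping, $A$ emerges as $\delta - 2de$ as claimed.

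The main obstacle is the sign and coefficient bookkeeping in the $A$ step: the class $C$ has nonzero self-intersection $e$, so it contributes to $[F_2] \cdot C$ through both the $A[F_1]$ and $BC$ pieces, and additional contributions to the $[F_1]$-component appear when $M_{Qx} \cdot M_{Qy}$ is itself expanded in the basis (its $C$-coefficient is $d$ by Proposition \ref{proposition:cups}.2, which feeds back into $[F_2]\cdot C$ through $e$). Untangling these twin contributions is what produces the factor $-2de$. A minor secondary point is to verify that the decomposition $p_2^!(x) = Px + Qx$ is legitimate once we identify $p_1^! H_1 B_1 \oplus \{M_z : z \in H_1 F_1\}$ with $H_3 E$ via the splitting furnished by the chosen lift $\tilde\tau$. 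Once Steps 1--3 are assembled, the stated formula follows.
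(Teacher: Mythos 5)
Your overall strategy --- expand $[F_2]$ in the basis $\{[F_1], C\}\cup\{\Sigma_{\hat b,\hat z}\}$ and extract coefficients by pairing against $[F_1]$, $C$, and the $\Sigma_{b,z}$ --- is the same as the paper's (the paper packages the extraction as an expansion against the generating set $\{[F_1],\,C-e[F_1]\}\cup\{\Sigma_{b,z}\}$ using a general duality formula), and your determinations of $B=d$ and of the $\Sigma$-coefficients via Lemma \ref{proposition:etaformula} follow the paper's route, up to sign bookkeeping that you, like the paper, leave implicit (note that $\Sigma_{b,z}\cdot\Sigma_{\hat b,\hat z}=-i_{B_1}(b,\hat b)\,i_{F_1}(z,\hat z)=-1$ under the convention $i(x,\hat x)=1$, so the coefficient extracted by pairing is $-[F_2]\cdot\Sigma_{b,z}$; this sign needs to be tracked rather than waved at).

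The genuine gap is the $[F_1]$-coefficient. Your own setup gives, on one hand, $[F_2]\cdot C=A+Be=A+de$, and on the other, via $[F_2]=p_2^!x\cdot p_2^!y$ and Proposition \ref{proposition:cups}, $[F_2]\cdot C=i_{B_1}(Px,Py)+M_{Qx}\cdot M_{Qy}\cdot C$, which is by definition exactly $\delta$: there is no further ``$C$-component of $M_{Qx}\cdot M_{Qy}$'' to add on top, since that contribution is already contained in the term $M_{Qx}\cdot M_{Qy}\cdot C$. Carried out honestly, your computation therefore yields $A=\delta-de$, not $\delta-2de$; the closing paragraph about ``twin contributions feeding back through $e$'' is not a calculation and does not produce the extra $-de$ --- it reads as reverse-engineering toward the displayed formula. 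For comparison, the paper reaches $-2de$ by applying the expansion $x=\sum\pair{x,a_i}b_i+\sum\pair{x,b_i}a_i$ to the pair $a_1=[F_1]$, $b_1=C-e[F_1]$; since $(C-e[F_1])^2=-e$, the hypothesis that the $b_i$ be isotropic holds only when $e=0$, and precisely in that case $\delta-de$ and $\delta-2de$ agree --- and $e=0$ is the only case used later (monodromy in $\K_g$, Proposition \ref{proposition:cups}.3, where moreover $\tilde\tau=0$). So you must either carry out the direct pairing computation and state the coefficient it actually produces (observing that it matches the stated formula in the case that matters), or reproduce the paper's expansion argument with its hypotheses checked; as written, the one nontrivial coefficient in the lemma is asserted rather than derived, and the justification offered for it would not survive being made precise.
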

\begin{proof}
Suppose $V$ is a free $\Z$-module equipped with a nondegenerate symmetric bilinear pairing $\pair{\cdot, \cdot}$. Suppose moreover that there exists a generating set $\mathcal A = \{a_1, \dots, a_k, b_1, \dots, b_k\}$ with the property that $\pair{a_i, a_j} = \pair{b_i, b_j} = 0$ for all $i,j$, $\pair{a_i, b_j} = 0$ for $i \ne j$, and $\pair{a_i,b_i} = 1$. Then any element $x \in V$ is expressible in the form
\begin{equation}\label{equation:sumform}
x = \sum_{i = 1}^k \pair{x, a_i} b_i + \sum_{i = 1}^k \pair{x, b_i} a_i. 
\end{equation}
We will apply this to $V = H_2 E$ with the intersection pairing; in order to do this we must find a suitable generating set $\mathcal A$. Via Proposition \ref{proposition:cups}, the space $H_1B_1 \otimes H_1 F_1$ is orthogonal under $\cdot$ to $H_2 B_2$ and to $H_2F_1$, and moreover, the collection of $\Sigma_{b,z}$ for $(b,z) \in \mathcal B \times \mathcal F$ is such a generating set on this subspace. We also have $[F_1]\cdot C = 1$, as well as $([F_1])^2 = 0$ and $C^2 = e$. Therefore, we can take 
\[
\mathcal A = \{[F_1], C- e [F_1]\} \cup \{\Sigma_{b,z} \mid (b,z) \in \mathcal B \times \mathcal F\}.
\]
The only intersection that remains to be computed is $[F_2] \cdot C$. As $Px \cdot Py = i_{B_1}(Px,Py) [F_1]$, a direct computation gives
\begin{align*}
[F_2] \cdot C	&= (Px \cdot Py + Px \cdot Qy - Py \cdot Qx + Qx \cdot Qy) \cdot C\\
			&= Px \cdot Py \cdot C + Qx \cdot Qy \cdot C\\
			&= i_{B_1}(Px, Py) + Qx \cdot Qy \cdot C = \delta.
\end{align*}
By assumption, $[F_1] \cdot [F_2] = d$, and Formula (\ref{equation:etatau}) computes $[F_2] \cdot \Sigma_{b,z}$. Therefore we may insert these computations into Formula (\ref{equation:sumform}) to obtain (\ref{equation:eta2}).
\end{proof}

\para{Rigidity in the Johnson kernel} We now assume, as is required for Theorem \ref{theorem:double}, that the monodromy of $p_1$ is contained in $\K_g$. As noted in the previous section, the closed Johnson kernel $\K_{g}$ is defined to be the kernel of $\tau: \mathcal{I}_{g} \to \wedge^3 H/H$; similarly the pointed Johnson kernel $\K_{g,*}$ is the kernel of $\tau: \mathcal I_{g,*} \to \wedge^3 H$. We also noted above that if $\tau \circ \rho: H_1 B \to \wedge^3 H/ H$ is identically zero then there is a {\em canonical} lift
$\tilde \tau: H_1 B \to \wedge^3 H$, namely zero. This furnishes the (co)homology of $E$ with a canonical splitting in which all cup products in (\ref{equation:etatau}) vanish.\\
\indent In order to prove the main result of this section, we will compute $[F_2]$ and see that it is ``as simple as possible'' in the coordinates coming from $p_1$, the fibering with monodromy in $\mathcal K_g$. This will be accomplished via Lemma \ref{proposition:ef2}. Per our choice of lift $\tilde \tau$, the terms expressed via the Johnson homomorphism all vanish, so that
\[
[F_2] = a[F_1] + d C,
\]
for some $a \in \Z$. The coefficient $a$ is determined by $[F_2] \cdot C$, or equivalently $\delta = i_{B_2}(Px, Py)$ (by Proposition \ref{proposition:cups}.3, the term $Qx \cdot Qy \cdot C = 0$). This can be determined from Lemma \ref{proposition:etaformula}.

\begin{lemma}\label{lemma:P}
Let $E$ be a 4-manifold with two fiberings as a surface bundle over a surface: $p_1: E \to B_1$ and $p_2: E \to B_2$. Define the projection $P: H_1 B_2 \to H_1 B_1$. Suppose the monodromy for the bundle structure associated to $p_1$ lies in $\K_g$. Then $P \equiv 0$, and consequently $\delta = 0$.
\end{lemma}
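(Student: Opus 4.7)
The plan is to extract the vanishing of $P$ by comparing two expressions for the intersection numbers $[F_2] \cdot \Sigma_{b,z}$ with $b \in H_1 B_1$ and $z \in H_1 F_1$. Because the monodromy of $p_1$ lies in $\K_g$, the Johnson homomorphism vanishes and we may take the canonical lift $\tilde\tau \equiv 0$. Combined with Proposition~\ref{proposition:cups} (which in the Johnson-kernel setting gives $C^2 = e = 0$ and $C \cdot \Sigma_{b,z} = 0$), Lemma~\ref{proposition:ef2} collapses to
\[
[F_2] = \delta[F_1] + dC,
\]
and therefore $[F_2] \cdot \Sigma_{b,z} = 0$ for every $b$ and $z$.

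On the other hand, Lemma~\ref{proposition:etaformula} expresses the same intersection number in terms of $P$, $Q$, and $\tau$; specializing $\tau \equiv 0$ leaves, for all $x, y \in H_1 B_2$ with $x \cdot y = 1$ and all $b, z$,
\[
i_{B_1}(Px, b)\, i_{F_1}(Qy, z) = i_{B_1}(Py, b)\, i_{F_1}(Qx, z).
\]
By nondegeneracy of the two symplectic pairings this is equivalent to the tensor identity
\[
Px \otimes Qy = Py \otimes Qx \quad \text{in } H_1 B_1 \otimes H_1 F_1.
\]

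The remainder is a short piece of linear algebra. By Lemma~\ref{proposition:Qsym}, $Q$ is symplectic with scaling factor $d = [F_1] \cdot [F_2]$; assuming $d > 0$, $Q$ is injective, and for any $x \ne 0$ with $y$ chosen so that $x \cdot y = 1$ we have $i_{F_1}(Qx, Qy) = d \ne 0$, so $Qx$ and $Qy$ are linearly independent in $H_1 F_1$. An equality of simple tensors $a \otimes b = c \otimes e$ in which $\{b, e\}$ is linearly independent forces $a = c = 0$; applied to our identity this gives $Px = Py = 0$. Letting $x$ vary over $H_1 B_2$ we conclude $P \equiv 0$, whereupon the expression $\delta = i_{B_1}(Px, Py) + Qx \cdot Qy \cdot C$ supplied by Lemma~\ref{proposition:ef2} reduces to $\delta = 0$ (the second term vanishing since $C^2 = 0$).

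The main interpretive obstacle is the degenerate case $d = 0$. By Proposition~\ref{proposition:degree} this is equivalent to $\deg(p_1 \times p_2) = 0$, which in the $\K_g$ setting with genuinely distinct fiberings is precisely what Proposition~\ref{proposition:jfiber} will ultimately rule out. I anticipate that the full write-up either arranges the order of argument so that $d > 0$ is already secured when this lemma is invoked, or disposes of $d = 0$ independently by combining the primitivity of $[F_2]$ with the self-intersection identity $[F_2]^2 = 2\delta d$.
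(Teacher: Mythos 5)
Your argument is essentially the paper's proof of Lemma \ref{lemma:P}: both set the two expressions for $[F_2]\cdot\Sigma_{b,z}$ against each other --- the vanishing forced by the canonical zero lift of $\tau$ (via Lemma \ref{proposition:ef2} and Proposition \ref{proposition:cups}) versus formula (\ref{equation:etaformula}) --- and then extract $Px=0$ from symplectic nondegeneracy. Your tensor identity $Px\otimes Qy=Py\otimes Qx$ is a repackaging of the paper's step of pairing against a class $z$ with $i_{F_1}(Qy,z)\neq 0$ and $i_{F_1}(Qx,z)=0$; both versions need $Qx$ and $Qy$ to be linearly independent.

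On the degenerate case you flag: neither of your anticipated resolutions is what the paper does. The paper does not secure $d\neq 0$ before this lemma (that comes only afterwards, in Lemma \ref{proposition:d1}, using $[F_2]=dC$ and primitivity, which themselves rest on $\delta=0$), and its proof simply takes the test class $z$ without justifying its existence, so the worry applies equally to the paper's own write-up; also note that your fallback of combining primitivity with $[F_2]^2=2\delta d=0$ only yields $\delta d=0$, which does not exclude $d=0$, $\delta=\pm 1$. The clean, non-circular fix uses distinctness of the fiberings directly: by Lemma \ref{lemma:direct} and Proposition \ref{proposition:gysin}, $p_1^*H^1(B_1,\Q)\cap p_2^*H^1(B_2,\Q)=0$, and since $p_2^{!}$ is Poincar\'e dual to $p_2^*$ this says exactly that $Q$ is injective on $H_1B_2$; hence $Qx,Qy$ are independent for any symplectic pair $x,y$, with no hypothesis on $d$, and the rest of your argument runs unchanged. (Distinctness is genuinely needed: if $p_2$ is the same fibering as $p_1$, then $Q\equiv 0$ and $P$ is essentially the identity, so the lemma would be false.) One small attribution slip: the vanishing of $Qx\cdot Qy\cdot C$ at the end is Proposition \ref{proposition:cups}.3's statement that $C\cdot M_x=0$ (equivalently $M_z\cdot M_w\cdot C=0$), not merely $C^2=0$, although under your running assumption $d\neq 0$ your reasoning can be repaired since there $Qx\cdot Qy=d\,C$.
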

\begin{proof}
Returning to (\ref{equation:etaformula}), in the Johnson kernel setting, both $[F_2] \cdot \Sigma_{b,z}$ and $\tilde \tau(b)(Qx \wedge Qy \wedge z)$ are zero for all $x,y,z$. Taking $z$ to be any element satisfying $i_{F_1}(Qy,z) \ne 0$ and $i_{F_1}(Qx,z) = 0$, (\ref{equation:etaformula}) simplifies to $i_{B_1}(Px, b) = 0$. Since this is true for all $b$, we conclude that $Px = 0$, and since any $x \in H_1 B_2$ has a suitable $y$ so that (\ref{equation:etaformula}) holds, we conclude that $P \equiv 0$ and $\delta = 0$ as claimed.
\end{proof}

With this in hand, we can apply Lemma \ref{proposition:ef2} (recalling from Proposition \ref{proposition:cups}.3 that $e = 0$) to see that $[F_2]$ is as simple as possible:
\begin{equation}\label{eqn:d}
[F_2] = dC.
\end{equation}
As was noted following the statement of Proposition \ref{proposition:morita}, $[F_2]$ must be a primitive class, and so $d = \pm 1$. We conclude that $d = 1$ (as $d \ge 0$ by Proposition \ref{proposition:degree}). We record this fact for later reference:
\begin{lemma}\label{proposition:d1}
Let $p_1: E \to B_1$ be a surface bundle over a surface with monodromy in $\K_g$. Suppose there is a second fibering $p_2: E \to B_2$. Then 
\[
\deg(p_1 \times p_2) = 1.
\]
Proposition \ref{proposition:degree} asserts the equality of $\deg(p_1 \times p_2)$ with $\deg(p_2|_{F_1}: F_1 \to B_2)$ and with $\deg(p_1|_{F_2}: F_2 \to B_1)$. Consequently
\[
\deg(p_2|_{F_1}: F_1 \to B_2) = \deg(p_1|_{F_2}: F_2 \to B_1) = 1.
\]
\end{lemma}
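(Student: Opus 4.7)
The plan is to assemble the calculation of $[F_2]$ in the $p_1$-coordinates from the pieces already established, and then use the primitivity of the fiber class together with Proposition \ref{proposition:degree} to pin down $d = 1$.

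First I would invoke Lemma \ref{proposition:ef2} to write
\[
[F_2] = (\delta - 2de)[F_1] + dC + \sum_{b \in \mathcal{B}, z \in \mathcal{F}} \tilde{\tau}(b)(Qx_z \wedge Qy_z \wedge z)\,\Sigma_{\hat b \hat z},
\]
and then simplify each coefficient under the assumption that the monodromy of $p_1$ lies in $\K_g$. Since $\tau \circ \rho \equiv 0$ in this case, the canonical lift $\tilde{\tau} = 0$ kills every summand on the right end. Proposition \ref{proposition:cups}.3 provides $e = C^2 = 0$. Finally, Lemma \ref{lemma:P} gives $P \equiv 0$ and hence $\delta = 0$. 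The whole expression collapses to $[F_2] = dC$.

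The key step is then to extract $d$ from the primitivity of $[F_2]$. By Proposition \ref{proposition:morita}, $[F_2] = (p_2)^!(*)$ is a primitive class in $H_2 E$, so the equality $[F_2] = dC$ forces $d = \pm 1$. On the other hand, Proposition \ref{proposition:degree} identifies $d = [F_1] \cdot [F_2]$ with the algebraic intersection number $I_E(F_1, F_2)$, which in turn equals the (non-negative) geometric cardinality $|F_1 \cap F_2|$; this excludes $d = -1$. Combined with the fact that $d \ne 0$ (since $[F_2]$ is nonzero), we conclude $d = 1$, that is, $\deg(p_1 \times p_2) = 1$.

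The consequent equalities $\deg(p_2|_{F_1}) = \deg(p_1|_{F_2}) = 1$ then follow immediately by quoting the equality of items (1), (2), (3) in Proposition \ref{proposition:degree}. The only genuinely delicate point is the primitivity step: everything else is a direct bookkeeping of results already proven. I would double-check the sign conventions making $d \ge 0$ by explicitly appealing to item (5) of Proposition \ref{proposition:degree}, which is the cleanest source of positivity.
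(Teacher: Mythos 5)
Your proposal is correct and follows essentially the same route as the paper: collapse the expression of Lemma \ref{proposition:ef2} using the zero lift of $\tau$, $e=0$ from Proposition \ref{proposition:cups}.3, and $P\equiv 0$ (hence $\delta=0$) from Lemma \ref{lemma:P} to get $[F_2]=dC$, then use primitivity of $[F_2]$ (Proposition \ref{proposition:morita}) to force $d=\pm 1$ and the positivity from Proposition \ref{proposition:degree} to conclude $d=1$. Your explicit appeal to item (5) of Proposition \ref{proposition:degree} for the sign is exactly the justification the paper intends by ``$d\ge 0$''.
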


\begin{remark}\label{remark:wrapup}
Observe that Lemma \ref{proposition:d1} supplies a proof of the missing assertion $(\ref{proposition:jfiber}.1) \implies (\ref{proposition:jfiber}.3)$ in Proposition \ref{proposition:jfiber}, namely that if $E$ is a surface bundle over a surface with monodromy in the Johnson kernel, then any second fibering necessarily yields a bi-projection with nonzero degree. Of course, the assertion that any of the conditions $(\ref{proposition:jfiber}.1), (\ref{proposition:jfiber}.2), (\ref{proposition:jfiber}.3)$, are equivalent to the bundle $E$ being a product is the content of Theorem \ref{theorem:double}. 
\end{remark}

\para{Cohomology - splittings coming from sections} In order to complete the proof of Theorem \ref{theorem:double}, we will combine the work we have done above with an analysis of what the (co)homology of $E$ looks like with respect to the coordinates coming from the second fibering (where the monodromy need not be contained in $\mathcal I_g$. The most convenient setting for this portion of the argument is in the {\em cohomology} ring, so we pause briefly to establish some preliminaries. 

Most of what we have established vis a vis the intersection pairing on $H_*(E)$ is directly portable to the setting of the cup product in cohomology. In particular, the maps
\[
p_i^*: H^*B_i \to H^*E
\]
for $i = 1,2$ are injections. We let $\eta_i \in H^2 B_i$ be an integral generator compatible with the chosen orientations; it is easy to see that $p_i^*(\eta_i)$ is Poincar\'e dual to $[F_i]$. Relative to a choice of splitting
\[
H^1 E = p_1^* H^1 B_1 \oplus H^1F_1,
\]
there are the projection maps $P: H^1 B_2 \to H^1 B_1$ and $Q:H^1 B_2 \to H^1 F_1$, and Lemma \ref{lemma:P} carries over to show that $P \equiv 0$. We can also transport our analysis of the intersection form on $H_* E$. In the cohomological setting, we have proved:

\begin{proposition}\label{proposition:cohoproduct}
Let $F_1 \to E \to B_1$ be a surface bundle over a surface with monodromy in the Johnson kernel $\K_g$. Then $E$ is an integral cohomology $B_1 \times F_1$, i.e. there exists a canonical isomorphism
\[
H^*E \approx H^*B_1 \otimes H^* F_1
\]
as graded rings.
\end{proposition}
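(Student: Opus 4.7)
The plan is to construct an explicit graded ring homomorphism $\phi: H^*(B_1, \Z) \otimes H^*(F_1, \Z) \to H^*(E, \Z)$ and then invoke the additive isomorphism of Proposition \ref{proposition:morita} to conclude $\phi$ is bijective, hence a graded ring isomorphism. The central tool is Proposition \ref{proposition:cups}, combined with the observation that since the monodromy lies in $\K_g$, the composition $\tau \circ \rho: H_1 B_1 \to \wedge^3 H / H$ is identically zero, so the canonical zero lift $\tilde \tau \equiv 0$ of the Johnson homomorphism is available.

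First I will use this zero lift to apply Proposition \ref{proposition:cups}.3, which furnishes a canonical family of classes $M_z \in H_3 E$ for $z \in H_1 F_1$, together with the distinguished class $C \in H_2 E$ satisfying $C^2 = 0$ and $C \cdot M_x = 0$ for all $x$. Let $m_z \in H^1 E$ and $c \in H^2 E$ denote the Poincar\'e duals of $M_z$ and $C$. From the tube-region description in Section \ref{subsection:construction}, the restriction of $m_z$ to a fiber $F_1$ is the Poincar\'e dual of $z$ on $F_1$, and the restriction of $c$ to a fiber is a generator of $H^2 F_1$; together with $p_1^*(H^1 B_1)$, the classes $\{m_z\}$ then realize the additive splitting of $H^1 E$ given by Proposition \ref{proposition:morita}.

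I will then define $\phi$ on degree-one generators by $\phi(\alpha \otimes 1) = p_1^*(\alpha)$ and $\phi(1 \otimes z) = m_z$, and extend multiplicatively with the standard Koszul sign. The essential verification is the cup-product identity
\[
m_z \cup m_w = i_F(z, w)\, c,
\]
which is the Poincar\'e dual of the intersection identity $M_z \cdot M_w = i_F(z, w)\, C$ established in the proof of Proposition \ref{proposition:cups}.3. The remaining relations needed for $\phi$ to be a ring map, namely $c^2 = 0$, $c \cup m_z = 0$, and the compatibility of $p_1^*$ with its own $H^* B_1$ ring structure, are all immediate consequences of Proposition \ref{proposition:cups}. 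Once $\phi$ is a ring homomorphism, bijectivity is formal: it sends a monomial basis of $H^* B_1 \otimes H^* F_1$ to a basis of $H^* E$ compatible with the additive decomposition in Proposition \ref{proposition:morita}.

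The crux of the argument, and the step where the Johnson kernel hypothesis is indispensable, is precisely the identity $m_z \cup m_w = i_F(z, w)\, c$. Without the assumption on $\K_g$, the general formula in Proposition \ref{proposition:cups} shows that $m_z \cup m_w$ would acquire additional terms supported in the $p_1^*(H^2 B_1)$ and $H^1 B_1 \otimes H^1 F_1$ summands of $H^2 E$, with coefficients determined by values $\tilde \tau(b)(\cdot \wedge \cdot \wedge \cdot)$ of the Johnson homomorphism. These correction terms would obstruct the tensor product ring structure; their vanishing under the canonical zero lift is exactly what the $\K_g$ assumption provides, and is the mechanism that promotes the additive isomorphism of Proposition \ref{proposition:morita} to a ring isomorphism.
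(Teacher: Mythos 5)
Your proposal is correct and takes essentially the same route as the paper: the paper deduces the proposition by transporting the intersection-form computation of Proposition \ref{proposition:cups} (using the canonical zero lift of $\tilde\tau$, available because the monodromy lies in $\K_g$) to cohomology via Poincar\'e duality and the torsion-freeness from Proposition \ref{proposition:morita}, and your explicit ring map $\alpha\otimes z\mapsto p_1^*\alpha\smile m_z$ is precisely the dual formulation of that computation, with your key identity $m_z\smile m_w=i_F(z,w)\,c$ being the Poincar\'e dual of $M_z\cdot M_w=i_F(z,w)\,C$ established in the proof of Proposition \ref{proposition:cups}.3. The only point stated more briskly than it deserves is bijectivity, which needs the observation that the image classes (dual to $[F_1]$, $C$, and the $\Sigma_{b,z}$, etc.) form an integral basis, e.g.\ by unimodularity of their intersection matrix; this matches the basis assertion the paper itself uses in Lemma \ref{proposition:ef2}.
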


We now continue with the proof of Theorem \ref{theorem:double}.
\begin{lemma}\label{proposition:annihilexist}
Suppose that the genus of $B_2$ is strictly smaller than that of $F_1$. Then there exist classes $x, y \in H^1 E$ annihilating $p_2^*H^1 B_2$ (that is, $x \smile p_2^*z = y \smile p_2^*z = 0$ for all $z \in H^1 B$), such that $x \smile y = \Phi_1$, where $\Phi_1 \in H^2(F_1)$ is a generator.
\end{lemma}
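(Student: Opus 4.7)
The plan is to construct $x$ and $y$ inside the Künneth summand $H^1 F_1 \subset H^1 E$ furnished by Proposition \ref{proposition:cohoproduct}. Three preliminary observations assemble the setup. First, the cohomological form of Lemma \ref{lemma:P} gives $V := p_2^* H^1 B_2 \subseteq H^1 F_1$. Second, because (\ref{eqn:d}) shows $[F_2] = C$, its Poincar\'e dual $p_2^* \eta_2$ lies in the summand $H^2 F_1$ and agrees there with the generator $\Phi_1$. Third, naturality of cup product gives
\[
(p_2^* z_1) \smile (p_2^* z_2) = i_{B_2}(z_1, z_2)\, \Phi_1,
\]
so the cup-product pairing on $H^1 F_1$ restricts on $V$ to the perfect symplectic form on $H^1 B_2$ transported via $p_2^*$.

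With these in hand, the key step is to decompose $H^1 F_1 = V \oplus V^\perp$ as an orthogonal sum of symplectic sublattices. Perfectness of the form on $V$ produces, for any $w \in H^1 F_1$, a unique $v_w \in V$ representing the functional $v \mapsto w \smile v$ on $V$; then $w - v_w$ lies in $V^\perp$, exhibiting the splitting. Because the form on all of $H^1 F_1$ is perfect and $V$ and $V^\perp$ are orthogonal under it, its restriction to $V^\perp$ is automatically perfect as well.

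Under the hypothesis $\mathrm{genus}(B_2) < \mathrm{genus}(F_1)$, the rank of $V$ is strictly less than that of $H^1 F_1$, so $V^\perp$ has rank at least $2$. Perfectness of the symplectic form on $V^\perp$ then supplies a pair $x, y \in V^\perp$ with $x \smile y = \Phi_1$: any primitive $x \in V^\perp$ is sent under the induced isomorphism $V^\perp \to (V^\perp)^*$ to a surjective functional $y' \mapsto x \smile y'$, and one picks $y$ achieving the value $\Phi_1$. By construction $x$ and $y$ annihilate $V = p_2^* H^1 B_2$, so they are the desired classes. The most delicate point is the identification $p_2^* \eta_2 = \Phi_1$ inside $H^2 F_1$; this is where the Johnson-kernel hypothesis on $p_1$ enters most essentially, via (\ref{eqn:d}) and the clean ring structure of Proposition \ref{proposition:cohoproduct}, which together purge all mixed $H^1 B_1 \otimes H^1 F_1$ contributions from the cup products in sight.
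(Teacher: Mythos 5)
Your proposal is correct and follows essentially the same route as the paper: use (the cohomological form of) Lemma \ref{lemma:P} to place $p_2^*H^1B_2$ inside the $H^1F_1$ summand, observe that it is a symplectic subspace of $H^1F_1$, and take $x,y$ to be a symplectic pair in its symplectic complement, which is nonzero precisely because of the genus hypothesis. The only difference is cosmetic: you re-derive the symplectic-subspace statement directly from $[F_2]=C$ and $p_2^*\eta_2=\Phi_1$ (and spell out the complement construction over $\Z$), whereas the paper simply cites the cohomological reformulation of Lemma \ref{proposition:Qsym} with $d=1$; the content is the same.
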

\begin{proof}
The cohomological formulation of Lemma \ref{lemma:P} shows that 
\[
p_2^* H^1 B_2 \le H^1 F_1.
\]
By (the cohomological reformulation of) Lemma \ref{proposition:Qsym}, $p_2^* H^1 B_2$ is in fact a {\em symplectic} subspace of $H^1 F$, and so there exists a symplectic complement. We can then take the desired $x,y$ to be suitable elements of this complement. \end{proof}

To finish the proof of Theorem \ref{theorem:double}, we will examine where $x,y$ must live, relative to coordinates on $H^* E$ coming from the fibering $p_2$. At this point, the results of Section \ref{section:appendix} come into play. In particular, (\ref{spl}) endows $H^1(E, \Q)$ with a splitting via
\[
H^1(E, \Q) = \im p^* \oplus \ker \hat \sigma^*.
\]
For the remainder of the proof, we will assume that all of our cohomology groups have rational coefficients.

\begin{lemma}\label{proposition:annihillocation}
Let $p: E \to B$ be any surface bundle over a surface with multisection $\sigma$. Suppose that there exists $x \in H^1 E$ annihilating $p^* H^1 B$. Then $x \in \ker \hat \sigma^*$.
\end{lemma}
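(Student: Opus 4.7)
The plan is to use the splitting (\ref{spl}) from Proposition \ref{proposition:qsection} to write $x = p^*a + y$ with $a \in H^1(B,\Q)$ and $y \in \ker \hat\sigma^*$, and then to show that the annihilation hypothesis forces $a = 0$. Concretely, since $\hat\sigma^* \circ p^* = \id$ by (\ref{proposition:qsection}.1), we may set $a := \hat\sigma^*(x)$ and $y := x - p^*a$, which automatically satisfies $\hat\sigma^*(y) = 0$. It then remains only to prove $a = 0$.

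The hypothesis gives, for every $z \in H^1(B,\Q)$,
\[
0 \;=\; x \smile p^*z \;=\; p^*(a \smile z) \;+\; y \smile p^*z \quad \in H^2(E,\Q).
\]
I would now pair this equation with $\hat\sigma_*(w)$ for an arbitrary $w \in H_2(B,\Q)$. The term involving $y$ drops out by property (\ref{proposition:qsection}.3), which is exactly designed for this situation, since $y \in \ker\hat\sigma^*$ and $\beta := p^*z$ is arbitrary. For the term $p^*(a \smile z)$, the adjointness (\ref{proposition:qsection}.2) combined with $\hat\sigma^*\circ p^* = \id$ from (\ref{proposition:qsection}.1) gives
\[
\langle p^*(a \smile z),\, \hat\sigma_*(w)\rangle \;=\; \langle \hat\sigma^* p^*(a \smile z),\, w\rangle \;=\; \langle a \smile z,\, w\rangle.
\]
Combining these yields $\langle a \smile z, w\rangle = 0$ for every $z \in H^1(B,\Q)$ and every $w \in H_2(B,\Q)$.

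By the nondegeneracy of the evaluation pairing $H^2(B,\Q) \otimes H_2(B,\Q) \to \Q$, the class $a \smile z \in H^2(B,\Q)$ vanishes for every $z \in H^1(B,\Q)$. Since $B$ is a surface, Poincar\'e duality implies that the cup product $H^1(B,\Q) \otimes H^1(B,\Q) \to H^2(B,\Q)$ is a nondegenerate pairing (in the degenerate case $H^1(B,\Q)=0$ the lemma is vacuous because the splitting forces $a=0$ directly). Hence $a=0$, so $x = y \in \ker\hat\sigma^*$, as desired.

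The argument is essentially formal once Proposition \ref{proposition:qsection} is in hand; the only step that requires any care is invoking property (\ref{proposition:qsection}.3) correctly, since cup products do not commute with $\hat\sigma^*$ in general (the transfer is not a ring map), and it is precisely this property that compensates for that failure on classes in $\ker\hat\sigma^*$.
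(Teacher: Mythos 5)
Your proof is correct and follows essentially the same route as the paper: both decompose $x$ via the splitting $H^1(E,\Q) = \im p^* \oplus \ker\hat\sigma^*$, pair the vanishing product $x \smile p^*z$ against $\hat\sigma_*$ of a class in $H_2(B,\Q)$, kill the $\ker\hat\sigma^*$ term with Proposition \ref{proposition:qsection}.3, evaluate the other term via adjointness and $\hat\sigma^*\circ p^* = \id$, and conclude from the nondegeneracy of the cup product on the surface $B$. The only cosmetic difference is that the paper argues by contradiction with a specific $c$ satisfying $b \smile c \ne 0$ and pairs against $[B]$, while you let $z$ and $w$ range over everything and conclude $a = 0$ directly.
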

\begin{proof}
Write
\[
x = v + p^* b,
\]
with $v \in \ker \hat\sigma^*$ and $b \in H^1 B$. If $b \ne 0$, then there exists $c \in H^1 B$ with $b \smile c \ne 0$. On the one hand, $x \smile p^*c = 0$ by assumption. On the other, letting $[B] \in H_2 B$ denote the fundamental class, we have by Proposition \ref{proposition:qsection}
\begin{align*}
\pair{x \smile p^* c, \hat \sigma_*[B]} &= \pair{(v + p^*b)\smile p^*c, \hat \sigma_*[B]}\\ 
							&= \pair{v \smile p^*c, \hat \sigma_*[B]} + \pair{p^*(b \smile c), \hat \sigma_*[B]}\\
							&= 0 + \pair{\hat \sigma^* p^*(b \smile c), [B]}\\
							&= \pair{b \smile c, [B]} \ne 0, 
\end{align*}
since $v \in \ker \hat \sigma^*$. In this case we have reached a contradiction, and so $b = 0$ as desired. 
\end{proof}

\begin{lemma}\label{proposition:gh}
Let $F_1 \to E \to B_1$ be a surface bundle over a surface with monodromy in $\K_g$, and suppose there is a second fibering $p_2: E \to B_2$. Let $g$ denote the genus of $F_1$, and $h$ denote the genus of $B_2$. Then $g = h$.
\end{lemma}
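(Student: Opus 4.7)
The plan is to establish the two inequalities $g \ge h$ and $g \le h$ separately, obtaining the conclusion by combining them.

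The first inequality is essentially immediate from Lemma \ref{proposition:d1}, which gives $\deg(p_2|_{F_1}: F_1 \to B_2) = 1$: a degree-one map between closed orientable surfaces induces a surjection $H_1(F_1, \Q) \twoheadrightarrow H_1(B_2, \Q)$, so $2g \ge 2h$. Equivalently, the cohomological formulation of Lemma \ref{lemma:P} (noted immediately before Proposition \ref{proposition:cohoproduct}) provides an injection $p_2^*: H^1(B_2, \Q) \hookrightarrow H^1(F_1, \Q)$, yielding the same bound.

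For the reverse inequality I assume $g > h$ for the sake of contradiction, and I will identify the class $\Phi_1 \in H^2 F_1 \subseteq H^2 E$ in two incompatible ways. First, by Lemma \ref{proposition:annihilexist} there exist $x, y \in H^1 E$ which annihilate $p_2^* H^1 B_2$ under the cup product and satisfy $x \smile y = \Phi_1$. Theorem \ref{theorem:hamen} applied to $p_2: E \to B_2$ supplies a multisection $\sigma$ (the signature of $E$ vanishes since $p_1$ has monodromy in $\K_g \subseteq \mathcal I_g$), and Proposition \ref{proposition:qsection} supplies the associated maps $\hat\sigma^*, \hat\sigma_*$ with $\hat\sigma^* \circ p_2^* = \id$. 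Lemma \ref{proposition:annihillocation} then places both $x$ and $y$ in $\ker \hat\sigma^*$, so Proposition \ref{proposition:qsection}.3 (with $\alpha = x$, $\beta = y$, $z = [B_2]$) yields
\[
\pair{\Phi_1, \hat\sigma_*[B_2]} = \pair{x \smile y, \hat\sigma_*[B_2]} = 0.
\]
Since $H^2(B_2, \Q) = \Q \cdot \eta_2$ is detected by pairing against $[B_2]$, this is equivalent to $\hat\sigma^*\Phi_1 = 0$.

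To obtain the contradiction, I will show that $\Phi_1 = p_2^*\eta_2$ in $H^2(E, \Z)$, which forces $\hat\sigma^*\Phi_1 = \hat\sigma^* p_2^*\eta_2 = \eta_2 \ne 0$ by Proposition \ref{proposition:qsection}.1. By Proposition \ref{proposition:cohoproduct}, $H^*E \cong H^*B_1 \otimes H^*F_1$ as graded rings; combined with Proposition \ref{proposition:morita}, the homology $H_*E$ also carries a Künneth decomposition and the evaluation pairing is diagonal with respect to it. Using the intersection computations of Proposition \ref{proposition:cups}, one identifies $[F_1]$ with the summand $H_0 B_1 \otimes H_2 F_1$, the class $C$ with $H_2 B_1 \otimes H_0 F_1$, and the $\Sigma_{b,z}$ with $H_1 B_1 \otimes H_1 F_1$. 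Since $\Phi_1 \in H^0 B_1 \otimes H^2 F_1$, this diagonality gives $\pair{\Phi_1, [F_1]} = 1$, $\pair{\Phi_1, C} = 0$, and $\pair{\Phi_1, \Sigma_{b,z}} = 0$ for all $b,z$, which together with the intersection table of Proposition \ref{proposition:cups} identifies the Poincaré dual of $\Phi_1$ with $C$. Since by (\ref{eqn:d}) the class $p_2^*\eta_2$ is also Poincaré dual to $C = [F_2]$, injectivity of Poincaré duality gives $\Phi_1 = p_2^*\eta_2$, completing the contradiction.

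The main obstacle is the Künneth/Poincaré duality bookkeeping in the last paragraph, specifically matching the intersection-theoretic characterization of $C$ from Proposition \ref{proposition:cups} with its role as the generator of the $H_2 B_1 \otimes H_0 F_1$ summand and extracting from this the identity $\Phi_1 = p_2^*\eta_2$. Once this identification is in hand, the contradiction with Proposition \ref{proposition:qsection}.1 and \ref{proposition:qsection}.3 is formal.
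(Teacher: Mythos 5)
Your argument is correct and follows essentially the same route as the paper's proof: $g \ge h$ from the degree-one restriction $p_2|_{F_1}$ (Lemma \ref{proposition:d1}), then a contradiction for $g > h$ using Lemma \ref{proposition:annihilexist}, Hamenst\"adt's multisection, Lemma \ref{proposition:annihillocation}, and Proposition \ref{proposition:qsection}. The only (harmless) difference is cosmetic: where the paper identifies $x \smile y$ with $p_2^*\eta_2$ by noting both lie in the one-dimensional image of $\smile: \wedge^2 H^1 F_1 \to H^2 F_1$, you pin down the same identity via Poincar\'e duality and $[F_2] = C$ from (\ref{eqn:d}).
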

\begin{proof}
We have already established (Lemma \ref{proposition:d1}) that 
\[
\deg(p_2|_{F_1}) = 1.
\]
As $p_2$ has positive degree, we conclude immediately that $g \ge h$. Suppose $g > h$. Then there exist classes $x,y \in H^1 E$ as in the statement of Lemma \ref{proposition:annihilexist}. We will make use of the existence of a multisection $\sigma$ of $p_2: E \to B_2$, so that by Lemma \ref{proposition:annihillocation}, we must have $x,y \in \ker \hat \sigma^*$. So by Proposition \ref{proposition:qsection},
\[
\pair{x \smile y, \hat\sigma_*[B_2]} = 0.
\]
In the notation of Proposition \ref{proposition:cohoproduct}, both $p_2^* H^1 B_2$ and the classes $x,y$ are contained in $H^1F_1$, and as the image of
\[
\smile: \wedge^2 H^1 F_1 \to H^2 F_1
\]
is one-dimensional (since $F_1$ is a surface), we conclude that $x \smile y = p_2^*(\eta_2)$, where $\eta_2 \in H_2 B_2$ is a generator. So then
\[
\pair{x \smile y, \hat\sigma_*[B_2]} = \pair{p_2^*(\eta_2), \hat\sigma^*[B_2]} = \pair{\eta_2, [B_2]} = 1.
\]
This is a contradiction; necessarily $g = h$.
\end{proof}
This shows that $p_2|_{F_1}$ is a map of degree one between surfaces of the same genus, and as is well-known, therefore
\[
(p_2)_*: \pi_1 F_1 \to \pi_1 B_2
\]
must be an isomorphism. 

\para{Finishing Theorem \ref{theorem:double}} At this point, we turn to an analysis of the fundamental group. Via the long exact sequence in homotopy for a fibration, there is an exact sequence
\[
1 \to \pi_1 F_i \to \pi_1 E \to \pi_1 B_i \to 1,
\]
for $i = 1,2$. Consequently, the kernel of 
\[
(p_1\times p_2)_*: \pi_1 E \to \pi_1 B_1 \times \pi_1 B_2
\]
is given by $\pi_1 F_1 \cap \pi_1 F_2$. On the other hand, this is also the kernel of the cross-projection
\[
\pi_1 F_1 \to \pi_1 B_2
\]
which was just shown to be an isomorphism. We conclude that $(p_1\times p_2)_*$ is an isomorphism.\\
\indent The monodromy of the bundle $E$ can be read off from the fundamental group, as the map $\pi_1 B_1 \to \Out(\pi_1 F_1) \approx \Mod(\Sigma_g)$  (the latter isomorphism coming from the theorem of Dehn, Nielsen, and Baer). Since $\pi_1E$ is a product, this map is trivial. The correspondence (\ref{eqn:correspondence}) then shows that $E$, being a surface bundle with trivial monodromy, is diffeomorphic to $B_1 \times  B_2$. This completes the proof of Theorem \ref{theorem:double}. \qed

    	\bibliography{doublefiber}{}
	\bibliographystyle{alpha}

\end{document}